\title{Computations in the unstable homology of moduli spaces of Riemann surfaces}
\author{Carl-Friedrich Bödigheimer\and Felix Boes\and Florian Kranhold}
\date{}
\newcommand{\mail}[1]{\upshape\href{mailto:#1}{\texttt{#1}}}
\def\Hy@UseMaketitleString#1{%
  \ltx@IfUndefined{Hy@#1}{}{%
    \begingroup
      \def\and{, }
      \let\Hy@saved@hook\pdfstringdefPreHook
      \pdfstringdefDisableCommands{%
        \expandafter\let\expandafter\\\csname Hy@newline@#1\endcsname
        \let\newline\\%
      }%
      \expandafter\ifx\csname @pdf#1\endcsname\@empty
        \expandafter\pdfstringdef\csname @pdf#1\endcsname{%
          \csname Hy@#1\endcsname\@empty
        }%
      \fi
      \global\let\pdfstringdefPreHook\Hy@saved@hook
    \endgroup
  }%
}
\newcommand{\tref}[2]{#1~\ref{#2}}
\newcommand{\pazofrac}[2]%
  {\leavevmode\kern.1em\raise.65ex\hbox{\scriptsize #1}%
   \kern-.125em/\kern-.125em\lower-.05ex\hbox{\scriptsize #2}}
\ifundef{\abstract}{}{
  \patchcmd{\abstract}{\quotation}{\quotation\noindent\ignorespaces}{}{}}
\newcommand{\authBlkOn}{
  \date{}
  \addtokomafont{date}{\vspace*{-\baselineskip}}

  \def\and{, }
  \global\let\oldauthor\@author 
  \author{}
  \hypersetup{pdfauthor={\oldauthor}}

  \usepackage{authblk}

  \renewcommand\Affilfont{\itshape\small}
  \renewcommand\Authands{, and~}
  \renewcommand{\affilsep}{\baselineskip}
}
\tikzset{
  }
\let \le       \leqslant
\let \ge       \geqslant
\let \geq      \geqslant
\let \setminus \smallsetminus
\let \phi      \varphi
\let \epsilon  \varepsilon
\let \theta    \vartheta
\let \binom    \tbinom
\let \del      \partial  
\LetLtxMacro\orgvdots\vdots
\LetLtxMacro\orgddots\ddots
\DeclareRobustCommand\vdots{%
  \mathpalette\@vdots{}%
}
\newcommand*{\@vdots}[2]{%
  \sbox0{$#1\cdotp\cdotp\cdotp\m@th$}%
  \sbox2{$#1.\m@th$}%
  \vbox{%
    \dimen@=\wd0 %
    \advance\dimen@ -3\ht2 %
    \kern.5\dimen@
    \dimen@=\wd2 %
    \advance\dimen@ -\ht2 %
    \dimen2=\wd0 %
    \advance\dimen2 -\dimen@
    \vbox to \dimen2{%
      \offinterlineskip
      \copy2 \vfill\copy2 \vfill\copy2 %
    }%
  }%
}
\newcommand{\bm}  [1]{\mathbold{#1}}
\newcommand{\quot}[2]{\left.\raise0.5ex\hbox{$#1$} \right/\hspace*{-2px}\lower0.5ex\hbox{$#2$}}
\newcommand{\on}  [1]{\mathrm{#1}}
\newcommand{\pa}     [1]{\mathopen{}\left(#1\right)\mathclose{}}                    %
\newcommand{\set}    [1]{\mathopen{}\left\{#1\right\}\mathclose{}}                  %
\newcommand\dirlim{\mathop{\mathpalette\varlim@{\rightarrowfill@\scriptscriptstyle}}\nmlimits@}
\newcommand\invlim{\mathop{\mathpalette\varlim@{\leftarrowfill@\scriptscriptstyle}}\nmlimits@}
\newcommand{\map}{\on{map}}
\newcommand{\swr}{\kern.6px\wr\kern.6px}
\DeclareSymbolFont{eulargesymbols}{U}{zeuex}{m}{n}
\DeclareMathSymbol{\intop}{\mathop}{eulargesymbols}{"52}
\def\centerarc[#1](#2)(#3:#4:#5)%
\C\renewcommand{\C}{\mathbb{C}}\else\newcommand{\C}{\mathbb{C}}\fi
\newcommand  {\Q}   {\mathbb{Q}}
\newcommand  {\R}   {\mathbb{R}}
\newcommand  {\Z}   {\mathbb{Z}}
\newcommand{\bbA}{\mathbb{A}}
\newcommand{\bbF}{\mathbb{F}}
\newcommand{\bA}{\mathbb{A}}
\newcommand{\bF}{\mathbb{F}}
\newcommand{\bR}{\mathbb{R}}
\newcommand{\bZ}{\mathbb{Z}}
\newcommand{\caF}{\mathcal{F}}
\newcommand{\caH}{\mathcal{H}}
\newcommand{\caK}{\mathcal{K}}
\newcommand{\caP}{\mathcal{P}}
\newcommand{\caQ}{\mathcal{Q}}
\newcommand{\scC}{\mathscr{C}}
\newcommand{\scO}{\mathscr{O}}
\newcommand{\scV}{\mathscr{V}}
\newcommand{\bfa}{\mathbf{a}}
\newcommand{\bfb}{\mathbf{b}}
\newcommand{\bfc}{\mathbf{c}}
\newcommand{\bfd}{\mathbf{d}}
\newcommand{\bfe}{\mathbf{e}}
\newcommand{\bff}{\mathbf{f}}
\newcommand{\bfs}{\mathbf{s}}
\newcommand{\bfv}{\mathbf{v}}
\newcommand{\bfw}{\mathbf{w}}
\newcommand{\frH}{\mathfrak{H}}
\newcommand{\frM}{\mathfrak{M}}
\newcommand{\frP}{\mathfrak{P}}
\newcommand{\frS}{\mathfrak{S}}
\frq\renewcommand{\frq}{\mathfrak{q}}\else \newcommand{\frq}{\mathfrak{q}}\fi
\newcommand{\gen}[1]{\langle #1 \rangle}
\newcommand{\cyc}[1]{\langle #1 \rangle}
\newcommand{\re}{\operatorname{Re}}
\newcommand{\cF}{\mathcal{F}}
\newcommand{\cK}{\mathcal{K}}
\newcommand{\cP}{\mathcal{P}}
\newcommand{\cQ}{\mathcal{Q}}
\newcommand{\fM}{\mathfrak{M}}
\newcommand{\fS}{\mathfrak{S}}
\newcommand{\fT}{\mathfrak{T}}
\newcommand{\Par}{\mathfrak{P}}
\definecolor{ddgreen}{rgb}{.20,.60,.20}
\definecolor{dgrey}  {rgb}{.45,.45,.45}
\definecolor{dyellow}{rgb}{.84,.69,.00}
\definecolor{dred}   {rgb}{.90,.30,.30}
\definecolor{dblue}  {rgb}{.28,.32,.80}
\definecolor{dgreen} {rgb}{.17,.65,.16}
\definecolor{bgrey}  {rgb}{.85,.85,.85}
\definecolor{byellow}{rgb}{.95,.93,.40}
\definecolor{bred}   {rgb}{.95,.60,.60}
\definecolor{bblue}  {rgb}{.50,.50,.90}
\definecolor{bgreen} {rgb}{.70,.90,.60}
\definecolor{blila}  {rgb}{.96,.84,.96}
\definecolor{bbyellow}{rgb}{.95,.92,.60}
\definecolor{bbred}   {rgb}{.98,.85,.85}
\definecolor{bbblue}  {rgb}{.70,.70,.96}
\definecolor{bbgreen} {rgb}{.80,.96,.70}
\definecolor{bblila}  {rgb}{.94,.84,.97}
\definecolor{dlila}  {rgb}{.70,.30,.70}
\definecolor{lila}   {rgb}{.80,.60,.90}
\definecolor{turq}   {rgb}{.3,.7,.7}
\definecolor{grey}   {rgb}{.65,.65,.65}
\definecolor{dorange}   {rgb}{.9,.4,.2}
\newcommand{\bldots}{\raisebox{.5ex}{\fbox{$\ldots$}}}
\newcommand{\cDot}{\!\cdot\!}
\numberwithin{equation}{section}
\newcommand{\Br}{\mathfrak{B\kern-.5px r}}
\xpatchcmd\swappedhead{~}{.~}{}{}
\theoremstyle{plain}
\newtheorem{theo}   {Theorem}[section]
\newtheorem{lem}    [theo]{Lemma}
\newtheorem{prop}   [theo]{Proposition}
\theoremstyle{definition}
\newtheorem{defi}   [theo]{Definition}
\newtheorem{rem}    [theo]{Remark}
\newtheorem{constr} [theo]{Construction}
\newtheorem{nota}   [theo]{Notation}
\newcommand{\thistheoremname}{}
\theoremstyle{definition}
\newtheorem{genericthm}[theo]{\thistheoremname}
\newlength{\fixmidfigure}
\newenvironment{tFigure}{
  \setlength{\fixmidfigure}{\lastskip}\addvspace{-\lastskip}
  \begin{figure}[t]
  }{
  \end{figure}
  \addvspace{\fixmidfigure}
}
\newenvironment{tTable}{
  \setlength{\fixmidfigure}{\lastskip}\addvspace{-\lastskip}  
  \begin{table}[t!]
  \centering
  \small
  
  \setlength\tabcolsep{2.5pt}
  \setlength{\belowrulesep}{0pt}
  }{
  \end{table}
  \addvspace{\fixmidfigure}
}
\newenvironment{hTable}{
  \begin{table}[h]
  \centering
  \footnotesize
  
  \setlength\tabcolsep{2.5pt}
  \setlength{\belowrulesep}{0pt}
  }{
  \end{table}
}
\begin{document}

\maketitle

\begin{abstract}
  In this article we give a survey of homology computations for moduli spaces
  $\fM_{g,1}^m$ of Riemann surfaces with genus $g\ge 0$, one boundary curve, and
  $m\ge 0$ punctures. While rationally and stably this question has a
  satisfying answer by the Madsen–Weiss theorem, the unstable homology remains
  notoriously complicated. We discuss calculations with integral, mod-$2$, and
  rational coefficients. Furthermore, we determine, in most cases, explicit
  generators using homology operations.\looseness-1
\end{abstract}

\section{Introduction and overview}
\label{sec:introduction}
Let $\fM_{g,n}^m$ denote the moduli space of Riemann surfaces of genus
$g \geq 0$ with $n \geq 1$ boundary curves and with $m \geq 0$ permutable
interior punctures. The boundary curves are numbered and parametrised; an
equivalence is a biholomorphic map between two surfaces that respects the
numbering and the parametrisations of the boundary curves; the punctures may be
permuted. This space is the quotient of the corresponding Teichmüller space
$\fT_{g,n}^m$ by the proper action of the mapping class group
$\Gamma_{g,n}^m$. Since any self-equivalence must fix the boundary curves
pointwise, this action is free, and since the Teichmüller space is homeomorphic
to a ball, the quotient $\fM_{g,n}^m$ is a manifold and has the homotopy type of
the classifying space $B\Gamma_{g,n}^m$.

Some of these moduli spaces have well-understood homotopy types: for $g=0$, the
spaces $\fM_{0,1}^m$ are homotopy equivalent to the unordered configuration
spaces $C_m(\mathring D^2)$ of $m$ points in the interior of a disc, and thus,
the mapping class group $\Gamma_{0,1}^m$ is isomorphic to the braid group
$\mathfrak{B\kern-.7px r}_m$ on $m$ strings. In the case of $g=1$, we consider the moduli space
of bounded $2$-tori (or, equivalently, of ‘directed’ elliptic curves).
The moduli space $\fM_{1,1}$ is equivalent to the complement of the trefoil knot
in the $3$-sphere and the mapping class group $\Gamma_{1,1}$ is isomorphic to the
third braid group $\mathfrak{B\kern-.7px r}_3$.\looseness-1

We are interested in the homology groups $H_\bullet(\frM_{g,n}^m)$. While it is
a famous result that increasing the genus is homologically stable
\cite{Harer-84}, and while the Madsen–Weiss theorem \cite{Madsen-Weiss} gives
a complete description of the stable homology with rational and mod-$p$ coefficients
(for the latter see \cite{Galatius}), the homology outside this
stable range remains notoriously complicated. We focus on the case of $n=1$ and
consider homology with coefficients in $\Z$, $\Q$, and $\bbF_2$. Many explicit
results concerning these homology groups are well-known, and other results have
been achieved over the last decades, partially by the authors and in several
bachelors’, masters’, and PhD theses.  The purpose of this work is two-fold: on
the one hand, we give a detailed overview of what is known so far, and on the
other hand, we make some new contributions.

\subsection{Homology operations}
\noindent There are various geometric constructions that relate the homology
groups $H_\bullet(\frM_{g,1}^m)$ via homology operations. Even though we will
study them extensively in §\,\ref{sec:operations}, let us mention the most
important ones already here.

First of all, it is a classical observation that the boundary connected sum
turns the collection $\coprod_{g,m}\frM_{\smash{g,1}}^m$ into an $H$-space, and thus
endows its homology with the structure of a (graded) Pontrjagin algebra, whose
product we denote by
\[
  -\cdot - \colon H_i(\fM_{g_1,1}^{m_1}) \otimes H_j(\fM_{g_2,1}^{m_1})
  \longrightarrow  H_{i+j}(\fM_{g_1+g_2,1}^{m_1+m_2}).
\]
Even better, this product is part of an $E_2$-structure (this was observed
in \cite{Miller,CFB-90II}).  As a consequence, we additionally have a Browder
bracket $[-,-]$, and, for homology modulo $2$ or for even-dimensional classes, 
a Dyer–Lashof square $Q$. 
There are plenty of relations that involve these operations and which
hold for each $E_2$-algebra, see \cite[§\,\textsc{iii}]{Cohen-Lada-May}. We give
a short summary of them in §\,\ref{subsec:c2}.

In §\,\ref{subsubsec:T}, we introduce a further operation
$T\colon H_i(\frM_{g,1}^m)\to H_{i+1}(\frM_{g+1,1}^{m-1})$ that, visually
speaking, picks one of the punctures and a tangential direction, and turns it
into an extra handle. This $T$-operation already appeared in the computations of
\cite{Mehner} and has proved useful to describe non-trivial homology classes.

We recall several other homology operations in §\,\ref{sec:operations}, for
example the Segal–Tillmann map \cite{Segal-Tillmann} and various operations
involving multiple boundary curves \cite{Kranhold}.

\subsection{Generators and homology tables}
In §\,\ref{sec:generators}, we describe several explicit homology classes,
called $\bfa$, $\bfb$, $\bfc$, $\bfd$, $\bfe$, $\bff$, $\bfs$, and $\bfv$.
Starting with them and using the above homology operations, we can characterise
many generators: these calculations are the main result of this paper. (In
these tables, an empty entry means that the corresponding homology group is
trivial; moreover, all homology groups in degrees higher than the rows shown are
trivial.)

\clearpage

\paragraph{Genus 0}
We start with the well-known case of genus $0$, which means we are looking at
unordered configuration spaces of particles in the plane.

\begin{quote}
  \textsc{Theorem \ref{thm:tableG0}.} \itshape 
  The homology groups $H_\bullet(\frM_{0,1}^m;\Z)$ and their generators
   are, for $m =0, \ldots, 5$, as shown in the following table: %
  \begin{hTable}
    \footnotesize
    \setlength\tabcolsep{4pt}
\begin{tabular}{rllllll}    
  \toprule
  ~    & $\fM^{\vphantom 1}_{0,1}$   & $\fM_{0,1}^1$    & $\fM_{0,1}^2$      & $\fM_{0,1}^3$        & $\fM_{0,1}^4$          & $\fM_{0,1}^5$\\%             & $\fM_{0,1}^6$              \\
  \hline
  $0$ & $\bZ\gen{1}$  & $\bZ \gen{\bfa}$ & $\bZ \gen{\bfa^2}$ & $\bZ \gen{\bfa^3}$   & $\bZ \gen{\bfa^4}$     & $\bZ \gen{\bfa^5}$\\%        & $\bZ \gen{\bfa^6}$         \\
  $1$ &               &                  & $\bZ \gen{\bfb}$   & $\bZ \gen{\bfa\bfb}$ & $\bZ \gen{\bfa^2\bfb}$ & $\bZ \gen{\bfa^3\bfb}$\\%    & $\bZ \gen{\bfa^4\bfb}$     \\
  $2$ &               &                  &                    &                      & $\bZ_2 \gen{\bfb^2}  $ &  $\bZ_2 \gen{\bfa\bfb^2}$\\% & $\bZ_2 \gen{\bfa^2\bfb^2}$ \\
  \bottomrule
\end{tabular}

    \caption{Homology groups for $g=0$ and $m = 0,\dotsc,5$}\label{tab:g0}
  \end{hTable}
\end{quote}
While $\bfa$ is a ground class, $\bfb=Q\bfa$ is a Dyer–Lashof square.
In fact, the $\bbF_2$-homology of these moduli spaces 
is the free Dyer–Lashof algebra generated by $\bfa$,
or, in other words, the polynomial algebra over $\bbF_2$ generated by all $\bfa_k$ 
with $\bfa_0\coloneqq \bfa$ and $\bfa_{k+1} \coloneqq Q\bfa_k$.

\paragraph{Genus 1}
The table for genus $1$ already has an interesting first column, since for
$m=0$ the mapping class group $\Gamma_{1,1}$ is isomorphic to the third braid
group.

\begin{quote}
  \textsc{Theorem \ref{thm:tableG1}.} \itshape 
  The homology groups $H_\bullet(\frM_{1,1}^m;\Z)$ and their generators are,
  for $m = 0, \ldots, 4$, as shown in the following table:
  \begin{hTable}
    \footnotesize
    \begin{tabular}{rlllll} 
  \toprule
  ~    & $\fM^{\vphantom 1}_{1,1}$ & $\fM_{1,1}^1$ & $\fM_{1,1}^2$                            & $\fM_{1,1}^3$                                      & $\fM_{1,1}^4$                                                \\
  \hline
  $0$~ & $\Z\gen{\bfc}$ & $\Z\gen{\bfa\bfc}$ & $\Z\gen{\bfa^2\bfc}$                           & $\Z\gen{\bfa^3\bfc}$                               & $\Z\gen{\bfa^4\bfc}$                                         \\
  $1$~ & $\Z\gen{\bfd}$ & $\Z\gen{\bfa\bfd}$ & $\Z\gen{\bfa^2\bfd} \oplus \Z_2\gen{\bfb\bfc}$ & $\Z\gen{\bfa^3\bfd} \oplus \Z_2\gen{\bfa\bfb\bfc}$ & $\Z\gen{\bfa^4\bfd} \oplus \Z_2\gen{\bfa^2\bfb\bfc}$         \\
  $2$~ &                & $\Z_2\gen{\bfe}$   & $\Z_2\gen{\bfa\bfe,\bfb\bfd}$                  & $\Z_2\gen{\bfa^2\bfe, \bfa\bfb\bfd}$               & $\Z_2\gen{\bfa^3\bfe, \bfa^2\bfb\bfd, \bfb^2\bfc}$           \\
  $3$~ &                &                    & $\Z_2\gen{\bff}$                               & $\Z \oplus \Z_2\gen{\bfa\bff, \bfb\bfe}$           & $\Z^2 \oplus \Z_2\gen{\bfa^2\bff, \bfa\bfb\bfe, \bfb^2\bfd}$ \\
  $4$~ &                &                    &                                                & $\Z^2$                                             & $\Z^3 \oplus \Z_2\gen{\bfb\bff}\oplus \Z_2$                  \\
  $5$~ &                &                    &                                                & $\Z$                                               & $\Z^2 \oplus \Z_2$                                           \\
  $6$~ &                &                    &                                                &                                                    & $\Z$                                                         \\
  \bottomrule
\end{tabular}

    \caption{Homology groups for $g=1$ and $m = 0, \dotsc, 4$}\label{tab:g1}
  \end{hTable}
\end{quote}
Again, while $\bfc$ is a ground class, the $T$-operation occurs here for the
first time in $\bfd=T\bfa$, which is (regarded as a loop in the classifying
space $B\Gamma_{1,1}$) a Dehn twist. Similarly, the class $\bfe$ can be written
as $E(\bfa^2)$ where $E$ is the operation from §\,\ref{subsubsec:E}. The class
$\bff$ has a more complicated description, see §\,\ref{subsec:f}. Note that
there are homology groups for which we cannot find generators in terms of
operations applied to known classes.

\paragraph{Genus 2}
The case of $g=2$ and $m=0$ was the first problem for which the simplicial model
from §\,\ref{sec:models} lead to new computational results
\cite{Ehrenfried,Abhau-CFB-Ehrenfried}. They have also been discovered
independently \cite{Godin} by graph-theoretical methods.

\begin{quote}
  \textsc{Theorem \ref{thm:tableG2}.} \itshape The homology groups $H_\bullet(\frM_{\smash{2,1}}^m;\Z)$
  and their generators are, for $m = 0, 1, 2$, as in following
  table (where $\lambda=\frac1\mu$ for some natural number $\mu$):
  \begin{hTable}
    \footnotesize
    \begin{tabular}{rlll}
  \toprule
  ~    & $\fM^{\vphantom 1}_{2,1}$                          & $\fM_{2,1}^1$                                                                    & $\fM_{2,1}^2$                                                                                           \\
  \hline
  $0$~ & $\Z\gen{\bfc^2}$                                   & $\Z\gen{\bfa\bfc^2}$                                                             & $\Z\gen{\bfa^2\bfc^2}$                                                                                  \\
  $1$~ & $\Z_{10}\gen{\bfc\bfd}$                            & $\Z_{10}\gen{\bfa\bfc\bfd}$                                                      & $\Z_{10}\gen{\bfa^2\bfc\bfd} \oplus \Z_2\gen{\bfb\bfc^2}$                                               \\
  $2$~ & $\Z_{2}\gen{\bfd^2}$                               & $\Z \oplus \Z_2\gen{\bfa\bfd^2}$                                                 & $\Z \oplus \Z_2\gen{\bfa^2\bfd^2, \bfb\bfc\bfd}$                                                        \\
  $3$~ & $\Z \gen{\lambda\bfs}\hspace*{-1px}\oplus \Z_{2} \gen{T\bfe}$\!\hspace*{2.3px} & $\Z \gen{\lambda\bfa\bfs}\hspace*{-1px}\oplus\Z \oplus \Z_2 \gen{\bfa\cDot T\bfe}\hspace*{-1px}\oplus\Z_2$\!\hspace*{2.3px} & $\Z \gen{\lambda\bfa^2\bfs}\hspace*{-1px} \oplus\Z^2 \oplus \Z_{2} \gen{\bfa^2\cDot T\bfe,\bfb\bfd^2}\hspace*{-1px}\oplus \Z_2^2$\! \\
  $4$~ & $\Z_2 \oplus \Z_3\gen{\bfv}$                       & $\Z_2^2 \oplus \Z_3 \gen{\bfa\bfv}\oplus\Z_3$                                    & $\Z \oplus \Z_2\gen{\bfb\cDot T\bfe}\oplus \Z_2^4 \oplus \Z_3 \gen{\bfa^2\bfv}\oplus \Z_3^2$            \\
  $5$~ &                                                    & $\Z$                                                                             & $\Z^2 \oplus\Z_2^4 \oplus \Z_3$                                                                         \\
  $6$~ &                                                    & $\Z$                                                                             & $\Z^2 \oplus \Z_2^3$                                                                                    \\
  $7$~ &                                                    &                                                                                  & $\Z_2$                                                                                                  \\
  \bottomrule
\end{tabular}

    \caption{Homology groups for $g=2$ and $m = 0,1,2$}\label{tab:g2}
  \end{hTable}
\end{quote}

\noindent Although nearly all entries in these three tables can easily be
exhibited as generators, two classes
need a more subtle treatment and hence are separated from the aforementioned
theorem; both appear in the PhD thesis of the second author \cite{Boes}:\looseness-1
\begin{quote}
  \hspace*{-.5px}\textsc{Theorem \ref{thm:s}.} \itshape 
  The class $\bfs$ in $H_3(\frM_{2,1};\Z)$ is a rational generator.  \\[.5\baselineskip]
  \textsc{Theorem \ref{thm:v}.} \itshape 
  The class $\bfv$ generates the $\Z_3$-summand in $H_4(\frM_{2,1};\Z)$.
\end{quote}

\noindent While the mere computations of the homology groups for $g\le 2$ were done in
\cite{Harer-91, Ehrenfried, Godin, Abhau, Mehner, Wang, Boes-Hermann}, our
contribution is the identification of generators.  Some of them have already
been found in \cite{Mehner, Boes-Hermann}, others occur here for the first
time. In particular, we give generators for the entire integral homology of
$\fM_{2,1}$, except for one $\Z_2$-summand in degree $4$. There are
similar tables over $\bbF_2$ which we discuss in Appendix \ref{sec:mod2} and
which relie on a recent result by Bianchi \cite{Bianchi}.

\paragraph{Genus 3}
The case $g=3$ is already much more complicated. Here we restrict ourselves to
$m=0$, that is: we only care about $H_\bullet(\frM_{3,1})$.

Let us start by considering the first and the second homology groups: it is a
famous result \cite[Thm.\,1]{Powell} that for $g\ge 3$, the mapping class
group $\Gamma_{g,1}$ is perfect, in other words $H_1(\frM_{g,1})=0$.  In
\cite[Thm.\,1.2]{Korkmaz-Stipsicz}, it has been shown that $H_2(\frM_{3,1})$ is
either $\Z$ or $\Z\oplus \Z_2$, and it has been shown in
\cite[Thm.\,4.9]{Sakasai} that the second of these two cases holds. The
$\Z_2$-summand is generated by $\bfc\bfd^2$, see the proof of
\cite[Lem.\,3.6]{Galatius-Kupers-RW} for details. We call the\footnote{The class
  $\bfw$ is uniquely determined up to sign; \cite{Galatius-Kupers-RW} uses the
  convention that $\kappa_1(\bfw)=12$ holds for the first Mumford–Miller–Morita
  class $\kappa_1$.} generator of the free part $\bfw$: it plays a prominent
role in the study of \emph{secondary} homological stability, see
\cite{Galatius-Kupers-RW}.

Furthermore, we have $H_i(\frM_{3,1})=0$ for $i\ge 10$; this can be seen for
example by inspecting the simplicial complex from §\,\ref{subsec:hilbert}. The
rational Betti numbers of $\fM_{3,1}$ have been calculated in
\cite[§\,6.5.1.4]{Boes-Hermann}; they are given by
\begin{align}\label{eq:BettiM31}
  \dim_\Q H_i(\frM_{3,1};\Q) 
  = 1,0,1,1,0,1,1,0,0,1
\end{align}
In her PhD thesis \cite{Wang}, Wang performed many computer-aided calculations
for $\fM_{3,1}$ in prime characteristic. Combining her results with the Betti
numbers (\ref{eq:BettiM31}), the integral homology of $\fM_{3,1}$ is determined
up to possible direct summands of the form $\Z_{\smash{p^k}}$, where $p\ge 29$ is a
prime:\footnote{To be precise, Wang states that also summands of the form
  $\Z_{\smash{p^k}}$ with $p<29$ a prime and $k$ a \emph{large} exponent can
  occur. However, she also calculates the $\bF_p$-Betti numbers
  \cite[p.\,68]{Wang} for $p<29$, and they are, for each such $p$, exactly the
  sum of the rational ones and the ones coming from the $p$-torsion summands
  that we have already found—this excludes these further summands.} 
  these possible summands are symbolised by $\bldots$\kern.5px.

\begin{quote}
  \textsc{Summary.} \itshape The homology groups $H_\bullet(\frM_{3,1}; \Z)$ and
  some of their generators are as shown in the following table:
  \begin{hTable}
    \footnotesize
    \begin{tabular}{rl}
  \toprule
  ~    & $\fM_{3,1}$                                                                                           \\
  \hline
  $0$~ & $\Z\gen{\bfc^3}$                                                                                      \\
  $1$~ & ~                                                                                                     \\
  $2$~ & $\Z\gen{\bfw} \oplus \gen{\bfc\bfd^2}$                                                                \\
  $3$~ & $\bldots \oplus \Z \oplus \Z_2 \oplus \Z_3 \oplus \Z_4 \oplus \Z_7$                                   \\
  $4$~ & $\bldots \oplus \Z_2^2 \oplus \Z_3^2$                                                                 \\
  $5$~ & $\bldots \oplus \Z \oplus \Z_2 \oplus \Z_3$                                                           \\
  $6$~ & $\bldots \oplus \Z \oplus \Z_2^3$                                                                     \\
  $7$~ & $\bldots \oplus \Z_2$                                                                                 \\
  $8$~ & $\bldots$                                                                                             \\
  $9$~ & $\bldots \oplus \Z$                                                                                   \\
  \bottomrule
\end{tabular}

    \caption{Homology groups for $g=3$ and $m = 0$}\label{tab:g3}
  \end{hTable}
\end{quote}

\paragraph{Genus 4 and higher}
Rationally, Harer’s stability theorem
\cite{Harer-84,Ivanov,Boldsen,RandalWilliams} and the Madsen–Weiss theorem
\cite{Madsen-Weiss} show that
\begin{align}\label{eq:MW}
  H_i(\frM_{g,1};\Q)\cong H^i(\frM_{g,1};\Q)\cong
  \Q[\kappa_1,\kappa_2,\dotsc]_{\text{(degree $i$)}}
\end{align}
for $i\le \frac23\hspace*{1px}g-\frac23$. Here $\kappa_i$ is the
$i$\textsuperscript{th} Mumford–Miller–Morita class: it has degree
$2i$. In addition to that, we can say something about $H_i$ for
$i\le 3$: we clearly have \mbox{$H_0(\frM_{g,1})\cong \Z\gen{\bfc^g}$}, and
$H_1(\frM_{g,1})=0$ for $g\ge 3$, as already noted. For the homological degrees $2$
and $3$, we collect the following two results from the literature:

\begin{itemize}
\item \emph{$H_2(\frM_{g,1}) \cong \Z\gen{\bfc^{g-3}\bfw}$ for $g\ge 4$.}\\[.25\baselineskip]
  From \cite[Thm.\,3.9]{Korkmaz-Stipsicz}, we know that
  $H_2(\frM_{4,1})\cong \Z$, in particular \mbox{$\bfc^2\bfd^2=0$}, as it
  has order $2$. Harer’s stability theorem implies that
  $\bfc\cDot-\colon H_2(\frM_{3,1})\longrightarrow H_2(\frM_{4,1})$ is surjective, and hence
  $\bfc\bfw$ is a free generator. Using Harer’s stability theorem once again,
  $\bfc^{g-4}\cDot -\colon H_2(\frM_{4,1})\longrightarrow H_2(\frM_{g,1})$ is an isomorphism
  for $g\ge 4$.
\item \emph{$H_3(\frM_{g,1};\Q)=0$ for $g\ge 4$.}\\[.25\baselineskip]
  For $g\ge 6$, we are already in the stable range and can invoke (\ref{eq:MW}). The case $g=4$ is
  shown in \cite[Thm.\,6.1]{Galatius-Kupers-RW}, based on
  \cite[Thm.\,1.4]{Tommasi}, and the case $g=5$ follows from
  \cite[Cor.\,5.7]{Galatius-Kupers-RW}: one shows that
  \mbox{$H_4(\frM_{6,1};\Q)\longrightarrow H_4(\frM_{6,1},\frM_{5,1};\Q)$} is epic, hence
  $H_3(\frM_{5,1};\Q)\longrightarrow H_3(\frM_{6,1};\Q)=0$ is injective.\looseness-1
\end{itemize}
Summarising these results, we end up with \tref{Table}{tab:g0-5}. Note that for
homological degrees at most $2$, it is clear what the stabilisation $\bfc\cDot -$
looks like. In degree $3$, however, it remains unknown: for example we do not know
if $\bfc\cDot\lambda\bfs$ is non-trivial or even of infinite order; similarly,
we do not know if $\bfc\cDot T\bfe$ is non-trivial.

\begin{hTable}
  \footnotesize
  \setlength\tabcolsep{4pt}
\begin{tabular}{cllllll}
  \toprule
  ~ & $\fM_{0,1}$   & $\fM_{1,1}$    & $\fM_{2,1}$                                  & $\fM_{3,1}$                              & $\fM_{4,1}$        & $\fM_{5,1}$          \\
  \hline
  $0$ & $\Z\gen{1}$ & $\Z\gen{\bfc}$ & $\Z\gen{\bfc^2}$                             & $\Z\gen{\bfc^3}$                         & $\Z\gen{\bfc^4}$   & $\Z\gen{\bfc^5}$     \\
  $1$ & ~           & $\Z\gen{\bfd}$ & $\Z_{10}\gen{\bfc\bfd}$                      & ~                                        & ~                  & ~                    \\
  $2$ & ~           & ~              & $\Z_2\gen{\bfd^2}$                           & $\Z_2\gen{\bfc\bfd^2}\oplus\Z\gen{\bfw}$ & $\Z\gen{\bfc\bfw}$ & $\Z\gen{\bfc^2\bfw}$ \\
  $3$ & ~           & ~              & $\Z\gen{\lambda \bfs}\oplus \Z_2\gen{T\bfe}$ & $\Z\oplus\text{[torsion]}$               & [torsion]          & [torsion]            \\
  \bottomrule
\end{tabular}

  \caption{$H_i(\frM_{g,1})$ for $0\le i\le 3$ and $g\le 5$}\label{tab:g0-5}
\end{hTable}

\enlargethispage{\baselineskip}
\subsection{Properties and relations}
We also study how the above generators behave when combining them: for example,
each class $x\in H_\bullet(\frM_{g,1}^m)$ gives rise to a (graded) homology
operation by multiplying with it. If $x$ is one of our first generators, we can
say the following:
\begin{enumerate}
\item multiplication with $\bfa$ is
  split injective by \cite[Thm.\,1.3]{CFB-Tillmann-01};
\item multiplication with $\bfb$ is injective modulo $2$,
  as we see in §\,\ref{subsec:b} using \cite{Bianchi}.
\item multiplication with $\bfc$ is the classical genus-stabilisation. Harer’s
  stability theorem tells us
  that this map\footnote{To be precise, the optimal slope from
    \cite{RandalWilliams} is formulated without punctures; however, the
    punctured case follows from the unpunctured one by a spectral sequence
    argument \cite{Hanbury}.} is surjective if
  $\bullet\le \frac23\hspace*{1px}g$ and an isomorphism if
  $\bullet \le \frac23\hspace*{1px}g-\frac23$.
\end{enumerate}

Additionally, we want to describe what relations hold between the above
generators and operations. Some of them have already been derived: for example,
the relation $Q\bfc=3\cdot \bfc\bfd$ can be found \cite[Ex.\,6]{Godin} and appears
in \cite[§\,1.2]{Mehner}. In particular,
\mbox{$[\bfc,\bfc]=2\cdot Q\bfc=6\cdot\bfc\bfd\ne 0$}, showing that the $E_2$-structure
on $\coprod_g\frM_{g,1}$ cannot be enhanced to an $E_3$-structure
\cite[Thm.\,2.5]{Fiedorowicz-Song}, although the group completion
$\Omega B\coprod_g \frM_{g,1}$ has the homotopy type of an \emph{infinite} loop
space \cite[Thm.\,\textsc{a}]{Tillmann-97}. We contribute to this collection of
relations in §\,\ref{sec:relations}. For example, we show the following
stabilisation property of the Browder bracket:

\begin{quote}
  \textsc{Proposition \ref{prop:ckill}.} \itshape
  For two classes $x \in H_\bullet(\frM_{g,1}^m)$, $x' \in H_\bullet(\frM_{g',1}^{m'})$,
  the Browder bracket vanishes after a single stabilisation step: \mbox{$\bfc \cdot [x,x'] = 0$.}
\end{quote}

\noindent Let us point out that it follows from abstract considerations that $[x,x']$
vanishes after \emph{finitely many} stabilisation steps: the group completion
\mbox{$\coprod_g\frM_{g,1}\to \frM_{\infty,1}\times\Z$} is given by iterated
stabilisations and respects the $E_2$-structure on both sides, and the right
side has the homology of an infinite loop space, so its $E_2$-Browder bracket
vanishes. Another result shows that many Browder brackets are trivial:

\begin{quote}
  \hspace*{-.2px}\mbox{\textsc{Proposition \ref{prop:bdiv}.}} \itshape
  For each $x \in H_\bullet(\frM_{g,1}^m)$, the Browder brackets
  $[\bfc,x]$ and $[\bfd,x]$ are divisible by $2$,
  and $[\bfe,x]=0$.\\[.5\baselineskip]
  \mbox{\textsc{Proposition \ref{prop:ac0}.}} \itshape
  $[\bfa,\bfc]=0$ and $[\bfd,\bfd]=0$.
\end{quote}

\noindent Finally, we show that the above $T$-operation often behaves
like a differential.

\begin{quote}
  \hspace*{-.5px}\textsc{Proposition \ref{prop:TT}.} \itshape For each class
  $x\in H_\bullet(\frM_{g,1}^m)$, the class $(T\circ T)(x)$ is divisible by $2$
  and of order $2$, i.e.\ with coefficients in $\bbF_2$ or $\Q$, we have
  $T \circ T = 0$.
\end{quote}

\paragraph{Acknowledgements} 
The results presented in this survey have been accumulated over many years.
Some computations are contained in the theses of several students as quoted in the text, 
other results are recent and new. 
We have benefited from discussions with many students and colleagues: 
Andrea Bianchi (who additionally made many useful comments on the almost final draft of this paper), 
Daniela Egas Santander, 
Johannes Ebert, 
Domenico Marasco, 
Meinard Müller, 
Alexander Kupers, 
Oscar Randal-Williams, 
Ulrike Tillmann, and 
Nathalie Wahl.
The combinatorial insights of Balász Visy
and the programming expertise of
Ralf Ehrenfried, 
Jochen Abhau, 
Stefan Mehner,
Rui Wang, and 
Anna Hermann 
have been central for our computations.
Moreover, we mention those who studied generalisations or special cases of the classical moduli spaces or of our methods:
Tobias Fleckenstein,
Niklas Hellmer,
Alexander Heß,
Annika Kiefner,
Oliver Kienast,
Franca Lippert,
Viktoriya Ozornova, 
Moritz Rodenhausen,
and
Luba Stein:
all their influence and help is gratefully acknowledged.\looseness-1

And finally: many of our computations were executed on equipment at
the Institute for Numerical Simulation and
the Institute for Discrete Mathematics of the University of Bonn, for which we are grateful.

\section{Models for moduli spaces}
\label{sec:models}
Many of our results rely on a finite combinatorial model for the moduli space
$\frM_{g,n}^m$, namely the space of \emph{parallel slit domains} $\frP_{g,n}^m$:
this is based on an old work of Hilbert \cite{Hilbert}, has been described
in the context of moduli spaces by Bödigheimer \cite{CFB-90I}, and admits
a (relative) multisimplicial description
\cite{Abhau-CFB-Ehrenfried,Boes-Hermann}.%
\subsection{Moduli spaces of surfaces with a direction}
\label{subsec:directedSurf}

We start with a description of the moduli spaces $\fM_{g,n}^m$ that is better
suited for our purposes: instead of a Riemann surface with $n$ parametrised
boundary curves, we consider a closed Riemann surface $F$ and specify distinct
points $Q_1, \dotsc, Q_n$, together with a non-zero tangent vector $X_i$ at
$Q_i$ for each $1\le i\le n$, and we demand all $Q_i$ to be different from the
punctures $P_1, \dotsc, P_m$. We call the $Q_i$ \emph{dipole points} and the
$P_j$ \emph{sinks}. To simplify notation, we write
$\cQ = ((Q_1, X_1), \ldots, (Q_n, X_n))$ for the ordered tuple of dipole points
with their tangent vectors and we write $\cP = \{ P_1, \ldots, P_m \}$ for the
set of sinks. We call $(F,\cQ,\cP)$ a \emph{directed surface of type $(g,n,m)$}.\looseness-1

A \emph{conformal equivalence} between two directed surfaces $(F, \cQ, \cP)$ and
$(F', \cQ', \cP')$ of type $(g,n,m)$ is a biholomorphic (conformal) mapping
$F \to F'$ sending $Q_{\smash i}^{\vphantom\prime}$ to $Q'_{\smash i}$, with its
derivative at $Q_{\smash i}^{\vphantom\prime}$ sending
$X_{\smash i}^{\vphantom\prime}$ to $X'_{\smash i}$, and sending $\caP$ to $\caP'$.  We
denote by $\cF = [F, \caQ, \caP]$ the conformal equivalence class and let
$\fM_{g,n}^m$ be the \emph{moduli space} of directed surfaces of type $(g,n,m)$,
with the topology induced by the Teichmüller metric of the corresponding
Teichmüller space: it is equivalent to the moduli space from the
introduction. There is an $n!$-sheeted covering map
$\widetilde{\frM}_{g,n}^m\to \frM_{g,n}^m$ where the set of sinks is
ordered.

It is a classical result that the corresponding Teichmüller space,
and hence also $\frM_{g,n}^m$ itself, has (real) dimension $6g-6+4n-2m$,
with one exception: $\dim(\frM_{0,1})=0$.
\subsection{Potential functions}
\enlargethispage{\baselineskip}
We follow \cite[§\,3.1]{CFB-90I}: given a
directed surface $\cF=(F,\caQ,\caP)$ of type $(g,n,m)$, a \emph{potential
  function} is a map \mbox{$u \colon F \to \overline{\bR} = \bR \cup \{\infty\}$} that
satisfies the following conditions:\vspace*{-3px}

\begin{enumerate}
\item $u$ is harmonic away from its singularities $Q_1, \ldots, Q_n$ and $P_1, \ldots, P_m$.\vspace*{-3px}
\item $u$ has at each $Q_i$ a \emph{dipole} in the direction $X_i$:
  for a local parameter $z$ around $Q_i$ with $z(Q_i) = 0$ and
  $T_{Q_i}z(X_i) = \del_x$, there is a real number $A_i$ and a holomorphic
  function $f_i$ satisfying locally\looseness-1
  \[u(z) = \re\pa{\tfrac{1}{z}} - A_i \cdot \log|z| + \re\pa{f_i(z)}.\]
\item $u$ has at each $P_j$ a \emph{logarithmic sink}: for a local parameter $z$
  around $P_j$ with $z(P_j) = 0$, there is a positive real number $B_j$ and
  a holomorphic function $g_j$ satisfying locally
  \[u(z) = B_j \cdot \log|z|  + \re\pa{g_j(z)}.\]
\end{enumerate}
It then follows from the residue theorem that $\sum_i A_i = \sum_j B_j$. For
each such collection $A_1,\dotsc, A_n, B_1,\dotsc,B_m$ of constants, there is,
up to a \emph{single} further additive constant $C \in \R$, exactly one such
$u$: while the existence of $u$ follows from the Dirichlet principle, the
uniqueness is a consequence of the maximum principle. The space of all potential
functions is therefore parametrised by an affine subspace
$\bA_{n}^m\subset\bR^{n+m+1}$, which is contractible and of dimension
$n+m$. Furthermore, we have a left action of $\fS_m$ on $\bA_{n}^m$ by permuting
the constants $B_1,\dotsc,B_m$ (but not the $A_1, \dotsc, A_n$) and we obtain a
bundle
\[{^\R\hspace*{-.8px}\pi} \colon {^\R\hspace*{-.8px}\frH_{g,n}^m}  \coloneqq 
  \widetilde{\frM}_{g,n}^m \times_{\fS_m} \bA_{n}^m \longrightarrow \frM_{g,n}^m\]
over $\frM_{g,n}^m$ with fibre $\bA_{n}^m$. Thus, the bundle map
${^\R\hspace*{-.8px}\pi}$ is a homotopy equivalence. The elements of the total
space ${^\R\hspace*{-.8px}\frH_{g,n}^m}$ are classes $[\caF,u]$, where $\caF$ is
a directed surface of type $(g,n,m)$ and $u$ is a potential function as above.

\subsection{The critical graph}

Let $u$ be a potential function on a directed surface $\cF=(F,\caQ,\caP)$ of
type $(g,n,m)$. We choose a metric on $F$ that is compatible with the conformal
structure, and consider the gradient vector field $\phi \coloneqq {-\nabla u}$
of steepest descent: it is defined away from $\caQ\cup\caP$. Then each $Q_i$ is
a pole of order $2$ and each $P_j$ is a pole of order $1$. If we denote the
critical points of $\phi$ by $S_1,\dotsc,S_l\in F$ and let $h_k$ be the index of
$S_k$, then it follows from the Poincaré–Hopf index theorem that
$h \coloneqq h_1 + \dotsb + h_l$ coincides with $2g-2+2n+m$. The left side
of \tref{Figure}{fig:graphSlit} shows a small part of such a gradient field.\looseness-1

The \emph{critical graph} $\cK \subseteq F$ is now declared as in \cite[§\,3.2]{CFB-90II}: it is
the embedded graph consisting of all $Q_i$, all $P_j$, and all critical points
$S_k$ as vertices; the edges are all flow lines running from some $S_{\smash k}$ to
either another $S_{\smash{k'}}$ or to some $Q_i$ or some $P_j$. The graph is directed
and without loops, and only the parametrisation of the flow lines depends on the
above choice of metric. The complement $F \setminus \cK$ has $n$ contractible
components: following the flow of $\phi$ backwards, we obtain a retraction of
$F \setminus \cK$ onto the open set where $u$ is larger than any critical value;
this set, however, consists of $n$ contractible open sets, each lying ‘in front
of’ some $Q_i$ with respect to the direction $X_i$. We call the component of
$F\setminus\caK$ that contracts towards $Q_i$ the \emph{basin} of $Q_i$, denoted $F_i$.

It follows that $u$ is on $F \setminus \cK$ the real part of a holomorphic
function $w = u + \bm{i}v$, and the imaginary part $v$ is on each component
$F_i$ uniquely determined up to an additive constant $D_i \in \R$.  Altogether,
$w$ is determined by the constants $A_1, \dotsc, A_n$, $B_1, \dotsc, B_m$, $C$,
and $D_1, \dotsc, D_n$. Note that, even though $v$ is not globally defined, we
can speak of its \emph{critical values}: they are the limits when approaching a
critical point $S_k$ at the boundary of a basin.  We can regard $w$ as a map
$F\setminus \cK \to \C \times \set{1,\dotsc,n}$. Its image is a collection of
$n$ \emph{slitted} planes: %
this will be essential for our reformulation in §\,\ref{subsec:Par}.\looseness-1

Keeping track of the additive constants $D_1,\dotsc,D_n$, we obtain
a trivial $n$-dimensional vector bundle $\frH_{g,n}^m\longrightarrow {^\R\frH_{g,n}^m}$:
elements in $\frH_{g,n}^m$ are classes $[\cF,u,w]$ where \mbox{$\cF = (F,\caQ,\caP)$}
is a directed surface of type $(g,n,m)$, $u$ is a potential function on $\cF$,
and $w$ is a holomorphic extension defined on $F \setminus \cK$.
The composition of bundle projections\looseness-1
\[\pi \colon \frH_{g,n}^m \longrightarrow {^\R\hspace*{-1px}\frH_{g,n}^m} \longrightarrow \frM_{g,n}^m\]
is a bundle with fibre $\bbA_n^m \times \R^n$, and hence an equivalence.
The fibre of the bundle has dimension $2n+m$, so together with our
observation from §\,\ref{subsec:directedSurf} that $\frM_{g,n}^m$ has
dimension\footnote{For the exceptional case, we have $\dim(\frH_{0,1})=2$.}
$6g-6+4n+2m$, it follows that $\frH_{g,n}^m$ has dimension $3h$ for
$h\coloneqq 2g-2+2n+m$.

\begin{tFigure}
  \centering
  \def\svgwidth{.68\columnwidth}
  %% Creator: Inkscape inkscape 0.92.1, www.inkscape.org
%% PDF/EPS/PS + LaTeX output extension by Johan Engelen, 2010
%% Accompanies image file 'chap1_lines.pdf' (pdf, eps, ps)
%%
%% To include the image in your LaTeX document, write
%%   \input{<filename>.pdf_tex}
%%  instead of
%%   \includegraphics{<filename>.pdf}
%% To scale the image, write
%%   \def\svgwidth{<desired width>}
%%   \input{<filename>.pdf_tex}
%%  instead of
%%   \includegraphics[width=<desired width>]{<filename>.pdf}
%%
%% Images with a different path to the parent latex file can
%% be accessed with the `import' package (which may need to be
%% installed) using
%%   \usepackage{import}
%% in the preamble, and then including the image with
%%   \import{<path to file>}{<filename>.pdf_tex}
%% Alternatively, one can specify
%%   \graphicspath{{<path to file>/}}
%% 
%% For more information, please see info/svg-inkscape on CTAN:
%%   http://tug.ctan.org/tex-archive/info/svg-inkscape
%%
\begingroup%
  \makeatletter%
  \providecommand\color[2][]{%
    \errmessage{(Inkscape) Color is used for the text in Inkscape, but the package 'color.sty' is not loaded}%
    \renewcommand\color[2][]{}%
  }%
  \providecommand\transparent[1]{%
    \errmessage{(Inkscape) Transparency is used (non-zero) for the text in Inkscape, but the package 'transparent.sty' is not loaded}%
    \renewcommand\transparent[1]{}%
  }%
  \providecommand\rotatebox[2]{#2}%
  \ifx\svgwidth\undefined%
    \setlength{\unitlength}{525.15255145bp}%
    \ifx\svgscale\undefined%
      \relax%
    \else%
      \setlength{\unitlength}{\unitlength * \real{\svgscale}}%
    \fi%
  \else%
    \setlength{\unitlength}{\svgwidth}%
  \fi%
  \global\let\svgwidth\undefined%
  \global\let\svgscale\undefined%
  \makeatother%
  \begin{picture}(1,0.33350391)%
    \put(0,0){\includegraphics[width=\unitlength,page=1]{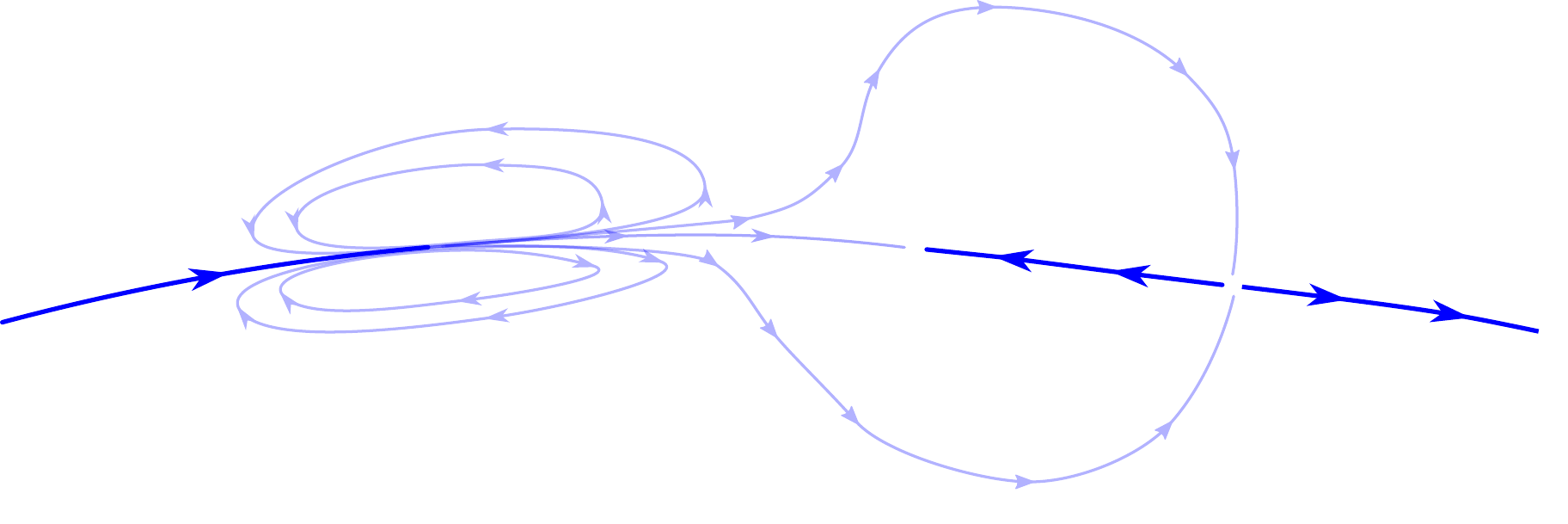}}%
    \put(0.80515602,0.15245142){\color[rgb]{0,0,0}\makebox(0,0)[lb]{\smash{$S$}}}%
    \put(0.59214989,0.19174533){\color[rgb]{0,0,0}\makebox(0,0)[lb]{\smash{$P$}}}%
    \put(0.25775727,0.19207291){\color[rgb]{0,0,0}\makebox(0,0)[lb]{\smash{$Q$}}}%
    \put(0,0){\includegraphics[width=\unitlength,page=2]{figures/graphSlit/flow.pdf}}%
    \put(0.73794227,0.16422972){\color[rgb]{0,0,0}\makebox(0,0)[lb]{\smash{$\scriptscriptstyle\beta$}}}%
    \put(0.7347107,0.13187055){\color[rgb]{0,0,0}\makebox(0,0)[lb]{\smash{$\scriptscriptstyle\beta'$}}}%
    \put(0.93244151,0.13637121){\color[rgb]{0,0,0}\makebox(0,0)[lb]{\smash{$\scriptscriptstyle\alpha$}}}%
    \put(0.92678775,0.10573006){\color[rgb]{0,0,0}\makebox(0,0)[lb]{\smash{$\scriptscriptstyle\alpha'$}}}%
    \put(0,0){\includegraphics[width=\unitlength,page=3]{figures/graphSlit/flow.pdf}}%
  \end{picture}%
\endgroup%

  \quad
  \raisebox{15px}{
    \def\svgwidth{.23\columnwidth}
    %% Creator: Inkscape inkscape 0.92.1, www.inkscape.org
%% PDF/EPS/PS + LaTeX output extension by Johan Engelen, 2010
%% Accompanies image file 'chap1_slit.pdf' (pdf, eps, ps)
%%
%% To include the image in your LaTeX document, write
%%   \input{<filename>.pdf_tex}
%%  instead of
%%   \includegraphics{<filename>.pdf}
%% To scale the image, write
%%   \def\svgwidth{<desired width>}
%%   \input{<filename>.pdf_tex}
%%  instead of
%%   \includegraphics[width=<desired width>]{<filename>.pdf}
%%
%% Images with a different path to the parent latex file can
%% be accessed with the `import' package (which may need to be
%% installed) using
%%   \usepackage{import}
%% in the preamble, and then including the image with
%%   \import{<path to file>}{<filename>.pdf_tex}
%% Alternatively, one can specify
%%   \graphicspath{{<path to file>/}}
%% 
%% For more information, please see info/svg-inkscape on CTAN:
%%   http://tug.ctan.org/tex-archive/info/svg-inkscape
%%
\begingroup%
  \makeatletter%
  \providecommand\color[2][]{%
    \errmessage{(Inkscape) Color is used for the text in Inkscape, but the package 'color.sty' is not loaded}%
    \renewcommand\color[2][]{}%
  }%
  \providecommand\transparent[1]{%
    \errmessage{(Inkscape) Transparency is used (non-zero) for the text in Inkscape, but the package 'transparent.sty' is not loaded}%
    \renewcommand\transparent[1]{}%
  }%
  \providecommand\rotatebox[2]{#2}%
  \ifx\svgwidth\undefined%
    \setlength{\unitlength}{337.79781958bp}%
    \ifx\svgscale\undefined%
      \relax%
    \else%
      \setlength{\unitlength}{\unitlength * \real{\svgscale}}%
    \fi%
  \else%
    \setlength{\unitlength}{\svgwidth}%
  \fi%
  \global\let\svgwidth\undefined%
  \global\let\svgscale\undefined%
  \makeatother%
  \begin{picture}(1,0.68017274)%
    \put(0,0){\includegraphics[width=\unitlength,page=1]{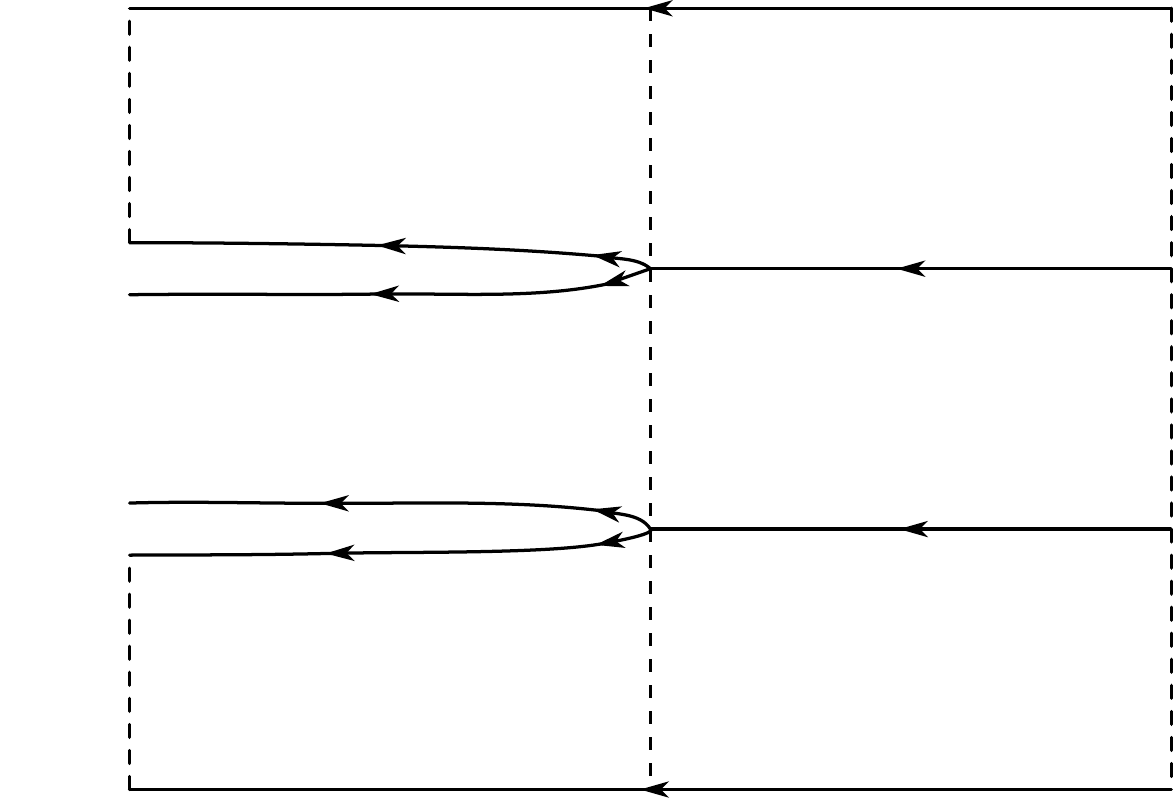}}%
    \put(0.56359387,0.46757912){\color[rgb]{0,0,0}\makebox(0,0)[lb]{\smash{$S$}}}%
    \put(0.56347245,0.24554144){\color[rgb]{0,0,0}\makebox(0,0)[lb]{\smash{$S$}}}%
    \put(-0.00550729,0.30718982){\color[rgb]{0,0,0}\makebox(0,0)[lb]{\smash{$P$}}}%
    \put(0.26009999,0.37259856){\color[rgb]{0,0,0}\makebox(0,0)[lb]{\smash{$\scriptscriptstyle\beta$}}}%
    \put(0.26354054,0.27306988){\color[rgb]{0,0,0}\makebox(0,0)[lb]{\smash{$\scriptscriptstyle\beta'$}}}%
    \put(0.26065947,0.48419315){\color[rgb]{0,0,0}\makebox(0,0)[lb]{\smash{$\scriptscriptstyle\alpha$}}}%
    \put(0.26307797,0.14084094){\color[rgb]{0,0,0}\makebox(0,0)[lb]{\smash{$\scriptscriptstyle\alpha'$}}}%
    \put(0,0){\includegraphics[width=\unitlength,page=2]{figures/graphSlit/slit.pdf}}%
  \end{picture}%
\endgroup%
}
  \caption{The gradient field and the critical graph of a potential function
    with one dipole, one sink, and one critical point; and the image of
    the complement $F\setminus\cK$ under $w$.}\label{fig:graphSlit}
\end{tFigure}

\subsection{Parallel slit domains}
\label{subsec:Par}

In this subsection, we construct a (relative) multisimplicial complex
$\frP_{g,n}^m$, which parametrises all possible slitted planes as in
\tref{Figure}{fig:graphSlit}, together with the extra information how to
‘reglue’ the slits, and which will turn out to be homeomorphic to
$\frH_{g,n}^m$. We will be very brief on the combinatorics; more details can be
found in \cite{Abhau-CFB-Ehrenfried} for the case $n=1$ and in
\cite[§\,2.3]{Boes-Hermann} for the general case.

\begin{nota}
  Let $X$ be a finite set.
  \begin{enumerate}
  \item Each permutation $\sigma$ on $X$ can be decomposed into its cycles,
    which are denoted by $\cyc{x_1,\dotsc,x_r}$ and read from left to right,
    i.e.\ $\sigma(x_i)=x_{i+1}$ for $1\le i<r$ and $\sigma(x_r)=x_1$. Let
    $C(\sigma)$ be the number of cycles of $\sigma$, including fixed points.
  \item Let $N(\sigma)$ be the word length norm of $\sigma$ with respect to the
    generating set of all transpositions. As an $r$-cycle has norm $r-1$, we get
    $N(\sigma) = \# X - C(\sigma)$.
  \end{enumerate}
\end{nota}

\begin{constr}
  For $p_1,\ldots, p_n\ge 0$, we consider the \emph{tableau}
  \[[p_1,\ldots,p_n]\coloneqq \{(i,j);\,1\le i\le n\text{ and }0\le j\le p_i\}\]
  as an index set. Let $\frS[p_1,\dotsc,p_n]$ be the group of \emph{all}
  permutations of $[p_1,\dotsc,p_n]$; it is, up to reindexing, isomorphic to the
  symmetric group on $n+p_1+\dotsb+p_n$ elements. For each
  $(i,j)\in [p_1,\dotsc,p_n]$ we have a \emph{deletion map}, which is a function of sets,
  \[D_j^i \colon \fS[p_1,\dotsc,p_n] \to \fS[p_1,\dotsc,p_i-1,\dotsc,p_n],\]
  where $\smash{D_{\smash j}^i(\sigma)}$ is obtained by skipping the symbol $(i,j)$ from the
  cycle representation of $\sigma$ and shifting down all $j'$ in $(i,j')$ with
  $j<j'$ by one. These functions $D_{\smash j}^i$ satisfy the $n$-semisimplicial
  identities, but they are no group homomorphisms.

  Without going into detail, let us point out that for $n=1$, the collection of
  deletion maps $D_j\colon \frS[p]\to \frS[p-1]$, together with adequate
  degeneracies, is closely related to the notion of a
  crossed simplicial group in the sense of Krasauskas \cite{Krasauskas}
  and Fiedorowicz–Loday \cite{Fiedorowicz-Loday}, \cite[§\,6]{Loday}.
\end{constr}

\begin{defi}
  Let $g, m \ge 0$ and $n \ge 1$. We put $h\coloneqq 2g-2+2n+m$ as before and define a
  $(1+n)$-semisimplicial complex $P_{g,n}^m$ as follows: its
  $(q, p_1,\ldots, p_n)$-simplices are given by tuples
  $\Sigma = (\sigma_q :\dotsc: \sigma_0)$ with
  $\sigma_k \in \fS[p_1,\dotsc,p_n]$ such that
  \begin{enumerate}
  \item $\sum_{k=1}^q N(\sigma_{\smash k\vphantom{a}}^{\vphantom{-1}}
    \sigma_{\smash{k-1}\vphantom{a}}^{-1})\le h$ and
  \item $C(\sigma_q) \le m+n$.
  \end{enumerate}
  (The notation for the tuple is chosen to be reminiscent of the homogeneous
  notation for the bar complex of a group.) We denote the $0$\textsuperscript{th}
  face operator by $\partial'$, and for $1\le i\le n$, the $i$\textsuperscript{th} face
  operator by $\partial^i$: this means we have $\partial'_0,\dotsc,\partial'_q$ and
  $\partial^i_0,\dotsc,\partial^i_{p_i}$ for each $i$. For
  $\Sigma=(\sigma_q:\dotsc:\sigma_0)$, these face operators are given by
  \begin{align*}
    \partial_k'\Sigma  &\coloneqq (\sigma_q : \dotsc : \widehat{\sigma_k} : \dotsc : \sigma_0),\\
    \partial^i_j\Sigma &\coloneqq (D_{\smash j}^i(\sigma_q) : \dotsc : D_{\smash j}^i(\sigma_0))
  \end{align*}  
  A cell $\Sigma = (\sigma_q:\dotsc:\sigma_0)$ of the complex $P_{g,n}^m$ is
  called \emph{non-degenerate} if it satisfies the following properties 
  (see \cite[§\,4.3]{Abhau-CFB-Ehrenfried} and \cite[Def.\,2.3.3]{Boes-Hermann}):
  \begin{enumerate}[itemsep=1pt]
  \item[\textsc{s1.}]
    $\sigma_0= \prod_{i=1}^{n} \cyc{(i,0),\dotsc,(i,p_i)}$ for
    $1 \le i \le n$.
  \item[\textsc{s2.}] 
    $\sigma_k(i,p_i)=(i,0)$ for each $0\le k\le q$ and $1 \le i \le n$.
  \item[\textsc{s3.}]  No cycle of any $\sigma_k$ contains two different symbols
    of the form $(i,0)$.
  \item[\textsc{s4.}] 
    $C(\sigma_q)=n+m$.
  \item[\textsc{s5.}]  
  $\sum_{k=1}^q N(\sigma_{\smash k\vphantom{a}}^{\vphantom{-1}}
    \sigma_{\smash{k-1}\vphantom{a}}^{-1}) = h$.
  \item[\textsc{s6.}] 
  We have $\sigma_k\ne \sigma_{k-1}$, and there is no
    $(i,j)$ such that $\sigma_k(i,j)=(i,j+1)$ for \emph{all} $k$.
  \item[\textsc{s7.}]  Under the equivalence relation on the set
    $\{ 1,\dotsc,n\}$, generated by $i \sim i'$ if there are $k,j,j'$ with
    $\sigma_k(i,j)=(i',j')$, all elements are equivalent.
  \end{enumerate}
  The collection $P^{\prime \, m}_{g,n}\subseteq P_{g,n}^m$ of \emph{degenerate}
  cells forms a subcomplex, and we define the space of \emph{parallel slit
    domains} as the complement
  \[\frP_{g,n}^m \coloneqq |P_{g,n}^m|\setminus |P^{\prime \, m}_{g,n}|.\]
  For the top-dimensional non-degenerate cells we have
  $q=h$, and all $\sigma_{\smash k}^{\vphantom{-1}}\sigma_{\smash{k-1}}^{-1}$ are
  disjoint transpositions, implying that $p_1 + p_2 + \dotsb + p_n = 2h$; thus,
  the dimension of $\frP_{g,n}^m$ is $3h$.
\end{defi}

\begin{constr}\label{constr:gluingRec}
  Using ordered simplex coordinates, each slit domain in $\frP_{g,n}^m$ is
  represented by a tuple $(\Sigma;a,b_1,\dotsc,b_n)$ where:\vspace*{-4px}
  \begin{enumerate}[itemsep=1pt]
  \item $\Sigma$ is a non-degenerate cell,\vspace*{-2px}
  \item $a$ is a $q$-tuple of real numbers $-\infty < a_q < \dotsb < a_1 < \infty$, and\vspace*{-2px}
  \item each $b_i$ is a $p_i$-tuple of real numbers
    $-\infty < b_{i,1} < \dotsb < b_{i,p_i} < \infty$.\vspace*{-2px}
  \end{enumerate}
  This data is used in a ‘gluing recipe’ to obtain an element
  $[\caF,u,w]\in \frH_{g,n}^m$ as in \cite[§\,5.2]{CFB-90I}, see
  \tref{Figure}{fig:SlitPicture-as-Cell}: we subdivide $n$ complex planes
  $\C \times \{1,\dotsc, n\}$ into rectangles\looseness-1
  \[R_{k,i,j} \coloneqq [a_{k+1}, a_k]\times[b_{i,j}, b_{i,j+1}] \times \set{i}\]
  for $0\le k\le q$, $1\le i\le n$ and $0\le j\le p_i$, where we put
  $a_0 \coloneqq b_{i, p_{i}+1} \coloneqq \infty$ and
  $a_{q+1} \coloneqq b_{i,0} \coloneqq -\infty$, skipping boundary at
  $\pm \infty$. Then we glue the left edge of $R_{k,i,j}$ to the right edge of
  $R_{k+1,i,j}$, and the top edge of $R_{k,i,j}$ to the bottom edge of
  $R_{k,\sigma_k(i,j)}$. (These identified top/bottom edges are called a
  \emph{slit}, if $\sigma_{k'}(i,j) \ne (i,j+1)$ for some $k'\ge k$.)
  
  The resulting surface has $n+m$ ends. For each $1 \le i \le n$, there is one
  end that corresponds to the ‘far right’ of the $i$\textsuperscript{th}
  plane; we close such an end by adding a point $Q_i$. The remaining ends are
  closed by adding points $P_1, \dotsc, P_m$. Then property
  \textsc{s7} ensures that $F$ is connected, while property
  \textsc{s5} ensures that $F$ has the correct Euler characteristic.

  The complex structure on $F$ is declared by the following atlas: each point in
  the interior of a rectangle has this rectangle as a coordinate neighbourhood;
  each point in the interior of an \emph{edge} of a rectangle needs the two
  adjacent half-rectangles as a coordinate neighbourhood; each point at a
  \emph{corner} of a rectangle uses the $4\cdot (l+1)$ quarter-rectangles
  attached to it, parametrised in such a way to have the point as a branching
  point of index $l$ when projecting down to a usual union of $4$
  quarter-rectangles; each $P_j$ has several triangles attached: these are
  parametrised by using the logarithm function to form a coordinate
  neighbourhood; and each $Q_i$ has triangles and bigons attached: these are
  parametrised using the logarithm function and the inversion
  $z\mapsto \frac1z$
  to form a coordinate neighbourhood as in
  \cite[§\,4.6]{CFB-90I}: this local parameter $z$ around $Q_i$ also determines
  the non-zero tangent vector $X_i$ via $T_{Q_i}z(X_i)=\partial_x$.\looseness-1
  
  The harmonic potential $u \colon F \to \overline \R$ is defined to be the
  projection of the slit domain to the $x$-axis. Then the critical graph on $F$
  is the union of all slits, and on its complement, the holomorphic map $w$ is just
  the reidentification of $F \setminus \caK$ with the open subset of our $n$
  complex planes with slits removed.
\end{constr}

\begin{tFigure}
  \centering
  \begin{tikzpicture}[scale=2.8]
  \draw[grey!70,very thin] (.75,1) -- (.75,0);
  \draw[grey!70,very thin] (.75,2.2) -- (.75,1.2);
  \draw[grey!70,very thin] (.5,.25) -- (1,.25);
  \draw[grey!70,very thin] (.5,.375) -- (1,.375);
  \draw[grey!70,very thin] (.375,.75) -- (1,.75);
  \draw[grey!70,very thin] (.5,1) -- (.5,0);
  \draw[grey!70,very thin] (.375,1) -- (.375,.382);
  \draw[grey!70,very thin] (.375,.367) -- (.375,.258);
  \draw[grey!70,very thin] (.375,0) -- (.375,.242);
  \draw[grey!70,very thin] (.75,1.45) -- (1,1.45);
  \draw[grey!70,very thin] (.75,1.95) -- (1,1.95);
  \draw[grey!70,very thin] (.375,1.7) -- (1,1.7);
  \draw[grey!70,very thin] (.5,2.2) -- (.5,1.956);
  \draw[grey!70,very thin] (.5,1.944) -- (.5,1.456);
  \draw[grey!70,very thin] (.5,1.444) -- (.5,1.2);
  \draw[grey!70,very thin] (.375,2.2) -- (.375,1.96);
  \draw[grey!70,very thin] (.375,1.94) -- (.375,1.46);
  \draw[grey!70,very thin] (.375,1.44) -- (.375,1.2);
  \draw[dred] (0,.23) -- (0,0) -- (1,0) -- (1,1) -- (0,1) -- (0,.77);
  \draw[dred] (0,.73) -- (0,.395);
  \draw[thin,dgrey] (0,.355) -- (0,.27);
  \draw[semithick,dgreen] (0,.77) -- (.375,.75);
  \draw[semithick,dgreen!50] (.375,.75) -- (0,.73);
  \draw[semithick,dblue!50] (0,.395) -- (.5,.375);
  \draw[semithick,dblue] (.5,.375) -- (0,.355);
  \draw[semithick,dblue] (0,.27) -- (.5,.25);
  \draw[semithick,dblue!50] (.5,.25) -- (0,.23);
  \draw[dyellow] (0,1.43) -- (0,1.2) -- (1,1.2) -- (1,2.2) -- (0,2.2) -- (0,1.97);
  \draw[dred] (0,1.93) -- (0,1.72);
  \draw[dred] (0,1.68) -- (0,1.47);
  \draw[semithick,dlila!50] (0,1.97) -- (.75,1.95);
  \draw[semithick,dlila] (.75,1.95) -- (0,1.93);
  \draw[semithick,dlila] (0,1.47) -- (.75,1.45);
  \draw[semithick,dlila!50] (.75,1.45) -- (0,1.43);
  \draw[semithick,dgreen!50] (0,1.72) -- (.375,1.7);
  \draw[semithick,dgreen] (.375,1.7) -- (0,1.68);
  \node at (.375,.75) {\tiny $\bullet$};
  \node at (.5,.375) {\tiny $\bullet$};
  \node at (.5,.25) {\tiny $\bullet$};
  \node at (.75,1.95) {\tiny $\bullet$};
  \node at (.75,1.45) {\tiny $\bullet$};
  \node at (.375,1.7) {\tiny $\bullet$};
  \node[dred] at (.94,.08) {\scriptsize $1$};
  \node[dyellow] at (.94,1.28) {\scriptsize $2$};
  \node[dgrey] at (.1875,2.28) {\tiny $3$};
  \node[dgrey] at (.4375,2.28) {\tiny $2$};
  \node[dgrey] at (.625,2.28) {\tiny $1$};
  \node[dgrey] at (.875,2.28) {\tiny $0$};
  \node[dgrey] at (1.08,.122) {\tiny $0_1$};
  \node[dgrey] at (1.08,.3095) {\tiny $1_1$};
  \node[dgrey] at (1.08,.5595) {\tiny $2_1$};
  \node[dgrey] at (1.08,.872) {\tiny $3_1$};
  \node[dgrey] at (1.08,1.322) {\tiny $0_2$};
  \node[dgrey] at (1.08,1.572) {\tiny $1_2$};
  \node[dgrey] at (1.08,1.822) {\tiny $2_2$};
  \node[dgrey] at (1.08,2.072) {\tiny $3_2$};
\end{tikzpicture}
  \caption{The visualisation of the slit domain $(\Sigma,a,b)$ with coordinates
    $(a,b_1,b_2)$ in the multisimplex $\Delta^3\times \Delta^3\times \Delta^3$,
    e.g.\ $a=(a_3<a_2<a_1)$, and with
    $\Sigma=(\sigma_3 \!:\! \sigma_2 \!:\! \sigma_1 \!:\! \sigma_0)$ where for example
    \mbox{$\sigma_3=\langle 0_1,2_1,2_2,1_2,3_1 \rangle \langle 1_1 \rangle
      \langle 0_2,3_2 \rangle$}, abbreviating $i_j\coloneqq (i,j)$. Here $g=0$,
    $n=2$, and $m=1$.}\label{fig:SlitPicture-as-Cell}
\end{tFigure}

\subsection{Hilbert uniformisation}
\label{subsec:hilbert}

In the previous subsection, we have constructed a map
$\frP_{g,n}^m\to \frH_{g,n}^m$ by regluing slits according to the combinatorics
of the cell. This construction has a very geometric inverse, which we call
\emph{Hilbert uniformisation}: given an element $[\caF,u,w]\in \frH_{g,n}^m$, the
images of the basins $F_i$ under $w$ are slitted complex planes, so the
collection $\coprod_i w(F_i)$ shows slits on $\C\times\{1,\dotsc,n\}$ as in
\tref{Figure}{fig:graphSlit}. If we additionally track how we have
dissected the surface along the edges of the critical graph, then we receive
exactly the combinatorial gluing information needed to describe an element
$(\Sigma;a,b_1,\dotsc,b_n)$ in $\frP_{g,n}^m$.\looseness-1

The Hilbert uniformisation and the gluing construction are indeed continuous and
inverses of each other,\footnote{In the exceptional case $\frM_{0,1}$, we obtain
  (\ref{eq:PL}) by noting that both $\frM_{0,1}$ and $\frP_{0,1}$ are just a
  point.}  see \cite[Thm.\,5.5.1]{CFB-90I}, and thus $\frP_{g,n}^m$ and
$\frH_{g,n}^m$ are homeomorphic. In particular,
$(|P_{g,n}^m|,|P_{g,n}^{\prime\, m}|)$ is a pair such that the
quotient is compact and the complement
$|P_{g,n}^m|\setminus |P_{g,n}^{\prime\, m}|$ is a $3h$-dimensional open
manifold. By Poincaré–Lefschetz duality, we obtain an isomorphism\looseness-1
\begin{align}\label{eq:PL}
  H_\bullet(\frM_{g,n}^m)\cong H_\bullet(\frP_{g,n}^m)
  \cong H^{3h-\bullet}(P_{g,n}^m,P_{g,n}^{\prime\, m};\scO),
\end{align}
where $\scO$ is the orientation system, which has a simplicial description as
carried out in \cite{Mueller}. We note that $\smash{\frP_{g,n}^m}$ is orientable
if the number $m$ of punctures is $0$ or $1$ (otherwise we have to pass to
$\smash{\widetilde \frP_{g,n}^m}$ where the punctures are ordered), so in these
cases, the orientation system is constant.

For $n=1$, the right side of (\ref{eq:PL}) is the homology of a finite double
complex with $2h$ columns and $h$ rows and can, in principle, be computed by
computer-aided methods. However, the number of cells grows very quickly: for
example, the complex $P_{2,1}$ already has $17{,}136$ non-degenerate cells
\cite[p.\,11]{Abhau-CFB-Ehrenfried}. In his thesis \cite{Visy}, Visy used
intricate combinatorics of the symmetric group $\frS_p$ to show that the
homology of the $p$\textsuperscript{th} column of the double complex is
concentrated in the top degree. Therefore, the first page of the spectral sequence
associated with the column-filtration of the double complex is concentrated in a
single row and hence collapses on the second page. This simplifies the chain
complex drastically and made many of the cited calculations possible at all. 
At the same time, Visy’s result gives a new proof that the homological dimension of the
moduli space $\frM_{g,1}$ is $4g-3$: this was first observed by Harer
\cite{Harer-86}.

\subsection{A computational example}
\label{subsec:compEx}

In order to illustrate how the above semisimplicial model can be used for homology
calculations, we consider the example of $\frM_{1,1}$ as in
\cite[§\,6]{Abhau-CFB-Ehrenfried}: the relative complex
$(P_{\smash{1,1}}^{\vphantom\prime},P_{\smash{1,1}}')$ has eight non-degenerate
cells and the incidence graph that underlies the simplicial double complex looks
as in \tref{Figure}{fig:P11Cells}. Taking care of all the signs involved, the
total cochain complex of $(P_{\smash{1,1}}^{\vphantom\prime},P_{\smash{1,1}}')$
is of the form
\[\begin{tikzcd}[ampersand replacement=\&,column sep=6em,row sep=-.3em]
    \Z^2\ar[<-]{r}{\begin{psmallmatrix*}[r]0 & -1 & 0 & -1\\0 & -1 & 0 & -1\end{psmallmatrix*}} \&
    \Z^4\ar[<-]{r}{\begin{psmallmatrix*}[r]-1 & 1\\0& 1\\1 & 1\\0&-1\end{psmallmatrix*}} \& \Z^2.\\
    {^{[6]}} \& {^{[5]}} \& {^{[4]}}
  \end{tikzcd}\]
Therefore, we see that
$H_\bullet(\frM_{1,1})\cong
H^{6-\bullet}(P_{\smash{1,1}}^{\vphantom\prime},P_{\smash{1,1}}') \cong
(\Z,\Z,0,\dotsc)$, which is not very surprising, as we are calculating the group
homology of $\Gamma_{1,1}\cong \mathfrak{B\kern-.7px r}_3$.

\begin{tFigure}
  \centering
  \begin{tikzpicture}[xscale=3.2,yscale=1.3]
  \node at (1.5,2) {\tiny $\cyc{0{,}3{,}2{,}1{,}4}\!:\!\cyc{0{,}1{,}4}\cyc{2{,}3}\!:\!\cyc{0{,}1{,}2{,}3{,}4}$};
  \node at (2.8,2) {\tiny $\cyc{0{,}3{,}2{,}1{,}4}\!:\!\cyc{0{,}3{,}4}\cyc{1{,}2}\!:\!\cyc{0{,}1{,}2{,}3{,}4}$};
  \node[black!30!red]  at (1,1)     {\tiny $\cyc{0{,}2{,}1{,}3}\!:\!\cyc{0{,}1{,}3}\cyc{2}\!:\!\cyc{0{,}1{,}2{,}3}$};
  \node[black!30!red]  at (2,1)     {\tiny $\cyc{0{,}2{,}1{,}3}\!:\!\cyc{0{,}3}\cyc{1{,}2}\!:\!\cyc{0{,}1{,}2{,}3}$};
  \node[black!30!red]  at (3,1)     {\tiny $\cyc{0{,}2{,}1{,}3}\!:\!\cyc{0{,}2{,}3}\cyc{1}\!:\!\cyc{0{,}1{,}2{,}3}$};
  \node[black!30!blue] at (4,1)     {\tiny $\cyc{0{,}3{,}2{,}1{,}4}\!:\!\cyc{0{,}1{,}2{,}3{,}4}$};
  \node[black!40!green] at (2,0)    {\tiny $\cyc{0{,}1{,}2}\!:\!\cyc{0{,}2}\cyc{1}\!:\!\cyc{0{,}1{,}2}$};
  \node[black!50!yellow] at (3.5,0) {\tiny $\cyc{0{,}2{,}1{,}3}\!:\!\cyc{0{,}1{,}2{,}3}$};
  \draw (1.3,1.85) -- (.975,1.15);
  \draw (1.35,1.85) -- (1.025,1.15);
  \draw (1.7,1.85) -- (1.95,1.15);
  \draw (2.6,1.85) -- (2.05,1.15);
  \draw (2.95,1.85) -- (2.975,1.15);
  \draw (3,1.85) -- (3.025,1.15);
  \draw[white,line width=1mm] (1.75,1.85) -- (3.975,1.15);
  \draw (1.75,1.85) -- (3.975,1.15);
  \draw (3.1,1.85) -- (4.025,1.15);
  \draw (2,.85) -- (2,.15);
  \draw (3,.85) -- (2.05,.15);
  \draw[white,line width=1mm] (2.05,.85) -- (3.44,.15);
  \draw[white,line width=1mm] (1.05,.85) -- (3.41,.15);
  \draw (1,.85) -- (1.95,.15);
  \draw (3.05,.85) -- (3.47,.15);
  \draw (2.05,.85) -- (3.44,.15);
  \draw (1.05,.85) -- (3.41,.15);
  \draw (3.53,.15) -- (3.97,.85);
  \draw (3.56,.15) -- (4,.85);
  \draw (3.59,.15) -- (4.03,.85);
\end{tikzpicture}
  \caption{The incidence graph for the double complex of
    $(P^{\vphantom\prime}_{\smash{1,1}},P^\prime_{\smash{1,1}})$. The black
    cells are of bidimension $(2,4)$, the red ones of bi-dimension $(2,3)$, the
    blue one of $(1,4)$, the green one of $(2,2)$, and the yellow one of
    $(1,3)$. Each edge stands for one direct face relation.}\label{fig:P11Cells}
\end{tFigure}

Let us finally give a geometric description of $\frP_{1,1}$; for details see
\cite{Dahlmann}: first of all, we have a proper action of $\R_{>0} \times \R^2$ on
$|P_{1,1}|$ by translating and scaling the slit picture, which is free on the
non-degenerate part $\frP_{1,1}$.  The $3$-dimensional quotient is a union of
two prisms $\Delta^2 \times \Delta^1$ with two triangles identified to a middle
triangle. The remaining top and bottom triangles are degenerate and the six
squares are identified in three pairs.  Cutting out the degenerate part gives a
$3$-sphere with a trefoil knot removed.

\section{Homology operations}
\label{sec:operations}
In this section we describe several operations on the collection of homology
groups $H_\bullet(\frM_{g,n}^m)$. We start by recalling the $E_2$-structure on the union
$\coprod_{g,m}\frM_{g,1}^m$.%

\subsection{The \texorpdfstring{$E_2$}{E₂}-algebra structure
  on \texorpdfstring{$\Par_{\bullet,1}^\bullet$}{P.₁}}
\label{subsec:c2}

It is a well-known result \cite{Miller,CFB-90II} that the collection
$\coprod_g\frM_{g,1}$ admits an action of the little $2$-cubes operad $\scC_2$ by
sewing of surfaces. The same works with punctures, so we obtain a
$\scC_2$-algebra $\coprod_{g,m}\frM_{g,1}^m$, with $\coprod_g\frM_{g,1}$ as a
subalgebra.

This operadic action can be expressed in terms of parallel slit domains (this
was actually the model used in \cite[§\,3]{CFB-90II} to establish the
$E_2$-structure), see \tref{Figure}{fig:E2-on-P}: given a $k$-ary operation in
$\scC_2$, i.e.\ an ordered collection of $k$ disjoint squares
$B_1,\dotsc,B_k$ in the plane, and, for each $1\le i\le k$, a parallel slit
configuration $S_i=(\Sigma_i,a_i,b_i)\in \Par_{\smash{g_i,1}}^{m_i}$, then we can insert
$S_i$ into $B_i$. For this purpose, we regard a slit domain as being supported
on a square; then the insertion starts with the
right-most square and proceeds to the left: here it may happen that we have
to ‘weave’ some squares through slits leaving another one: if the squares
$B_i$ and $B_j$ have overlapping projections to the $y$-axis, then their
projections to the $x$-axis are disjoint, so we can assume that $B_j$ is on the
left side of $B_i$. The slits leaving $B_i$ then cut $B_j$ into several
horizontal sub-rectangles which are reglued in accordance to the left-most
permutation $\sigma_{q_i}$ of $\Sigma_i$, and this is the area into which we
insert $S_j$, see \tref{Figure}{fig:E2-on-P}. After applying the gluing
recipe from \tref{Construction}{constr:gluingRec}, the insertion can be
visualised as in \tref{Figure}{fig:E2-on-M}.\looseness-1

\begin{figure}[h]
  \centering
  \begin{tikzpicture}[scale=1.2]
  \draw (0,0) rectangle (2,2);
  \draw[semithick] (.2,.6) rectangle (.8,1.2);
  \draw[semithick] (1,.8) rectangle (1.8,1.6);
  \node at (.5,.9) {\small $1$};
  \node at (1.4,1.2) {\small $2$};
  \draw[black!30!red,fill=black!30!red!7] (2.5,.2) rectangle (4.1,1.8);
  \draw[black!40!green,fill=black!40!green!7] (4.4,.2) rectangle (6,1.8);
  \draw (7,0) rectangle (9,2);
  \node at (2.3,1) {$\left(\vbox to 13mm{}\right.$};
  \node at (6.2,1) {$\left.\vbox to 13mm{}\right)$};
  \node at (6.6,1) {$=$};
  \draw[thin,black!30!red!40,fill=black!30!red!7] (7.2,.96) -- (7.2,.6) -- (7.8,.6) -- (7.8,.96);
  \fill[black!30!red!7] (7.2,1.28) rectangle (7.8,1.44);
  \draw[thin,black!30!red!40] (7.2,1.28) -- (7.2,1.44);
  \draw[thin,black!30!red!40] (7.8,1.28) -- (7.8,1.44);
  \draw[thin,black!30!red!40,fill=black!30!red!7] (7.2,1.12) -- (7.2,1.2) -- (7.8,1.2) -- (7.8,1.12);
  \draw[thin,black!40!green!40,fill=black!40!green!7] (8,.8) rectangle (8.8,1.6);
  \draw[black!30!red,semithick] (3,.52) -- (2.5,.52);
  \draw[black!30!red,semithick] (3.6,.84) -- (2.5,.84);
  \draw[black!30!red,semithick] (3,1.032) -- (2.5,1.032);
  \draw[black!30!red,semithick] (3.6,1.352) -- (2.5,1.352);
  \draw[black!40!green,semithick,shift={(1.9,0)}] (3.6,.52) -- (2.5,.52);
  \draw[black!40!green,semithick,shift={(1.9,0)}] (3,.84) -- (2.5,.84);
  \draw[black!40!green,semithick,shift={(1.9,0)}] (3.6,1.16) -- (2.5,1.16);
  \draw[black!40!green,semithick,shift={(1.9,0)}] (3,1.48) -- (2.5,1.48);
  \draw[black!40!green,semithick] (8.55,.96) -- (7,.96);
  \draw[black!40!green,semithick] (8.25,1.12) -- (7,1.12);
  \draw[black!40!green,semithick] (8.55,1.28) -- (7,1.28);
  \draw[black!40!green,semithick] (8.25,1.44) -- (7,1.44);
  \draw[black!30!red,semithick] (7.3875,.72) -- (7,.72);
  \draw[black!30!red,semithick] (7.6125,.84) -- (7,.84);
  \draw[black!30!red,semithick] (7.3875,.92) -- (7,.92);
  \draw[black!30!red,semithick] (7.6125,1.36) -- (7,1.36);
\end{tikzpicture}
  \caption{An instance of
    $\scC_2(2)\times \frP_{1,1}\times \frP_{1,1}\to \frP_{2,1}$}\label{fig:E2-on-P}
\end{figure}

\begin{figure}[h]
  \centering
  \begin{tikzpicture}[scale=1.2]
  \draw (0,0) rectangle (2,2);
  \draw[semithick] (.2,.6) rectangle (.8,1.2);
  \draw[semithick] (1,.8) rectangle (1.8,1.6);
  \node at (.5,.9) {\small $1$};
  \node at (1.4,1.2) {\small $2$};
  \draw (7,0) -- (8.4,0) -- (9,.6) -- (7.6,.6) -- (7,0);
  \draw[black!30!red!20]   (7.32,.18) -- (7.74,.18) -- (7.92,.36) -- (7.5,.36) -- (7.32,.18);
  \draw[black!40!green!20] (7.94,.24) -- (8.5,.24) -- (8.74,.48) -- (8.18,.48) -- (7.94,.24);
  \node at (2.3,1) {$\left(\vbox to 13mm{}\right.$};
  \node at (6.2,1) {$\left.\vbox to 13mm{}\right)$};
  \node at (6.6,1) {$=$};
  \draw[shift={(.1,0)},black!30!red] (2.5,.2) -- (3.7,.2) -- (4.1,.6) -- (2.9,.6) -- (2.5,.2);
  \draw[shift={(.1,0)},black!30!red,fill=white,fill opacity=.9] (2.9,.4) to[out=60,in=-90]
  (2.8,1.4) to[out=90,in=180] (3.3,1.8) to[out=0,in=90] (3.8,1.4) to[out=-90,in=120]
  (3.7,.4);
  \centerarc[shift={(.1,0)},black!30!red](3.3,1.18)(30:150:.3);
  \centerarc[shift={(.1,0)},black!30!red](3.3,1.5)(-27:-153:.23);
  \draw[black!40!green,shift={(1.9,0)}] (2.5,.2) -- (3.7,.2) -- (4.1,.6) -- (2.9,.6) -- (2.5,.2);
  \draw[black!40!green,fill=white,fill opacity=.9,shift={(1.9,0)}] (2.9,.4) to[out=60,in=-90]
  (2.8,1.1) to[out=90,in=180] (3.3,1.5) to[out=0,in=90] (3.8,1.1) to[out=-90,in=120]
  (3.7,.4);
  \centerarc[black!40!green](5.2,.88)(30:150:.3);
  \centerarc[black!40!green](5.2,1.2)(-27:-153:.23);
  \draw[black!30!red,fill=white,fill opacity=.9] (7.46,.27) to[out=60,in=-90]
  (7.41,.645) to[out=90,in=180] (7.62,.83) to[out=0,in=90] (7.83,.645) to[out=-90,in=120]
  (7.78,.27);
  \centerarc[black!30!red](7.62,.58)(30:150:.1125);
  \centerarc[black!30!red](7.62,.7)(-27:-153:.08625);
  \draw[black!40!green,fill=white,fill opacity=.9] (8.14,.36) to[out=60,in=-90]
  (8.09,.76) to[out=90,in=180] (8.34,.96) to[out=0,in=90] (8.59,.76) to[out=-90,in=120]
  (8.54,.36);
  \centerarc[black!40!green](8.34,.65)(30:150:.15);
  \centerarc[black!40!green](8.34,.8)(-27:-153:.115);
\end{tikzpicture}
  \caption{The $E_2$-action after the gluing construction}\label{fig:E2-on-M}
\end{figure}
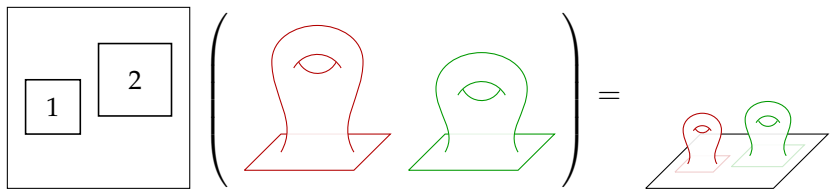

There is a subtlety if some of the $m_i$ is strictly positive, involving a
non-trivial rescaling of the area on the left side of an implanted slit picture;
this has been addressed in \cite[§\,5.2.4]{Kranhold} and does not affect the
general argument.  Thus, we obtain an action of $\scC_2$ on the family
$\Par_{\bullet,1}^\bullet$, which is \emph{bi-graded}, i.e.\ the structure maps
are of the form
\[\lambda\colon\scC_2(k)\times\Par_{g_1,1}^{m_1}\times\dotsb\times\Par_{g_k,1}^{m_k}
  \longrightarrow \Par_{g_1+\dotsb+g_k, 1}^{m_1+\dotsb+m_k}.\]

\subsubsection{Operations for \texorpdfstring{$E_2$}{E₂}-algebras}

The above $\scC_2$-action on $\frP_{\bullet,1}^\bullet$ gives rise to a \emph{unit
  class} $1\in H_0(\frM_{0,1})$ and three (graded) homology
operations,\footnote{Our sign convention for the Browder bracket slightly
  differs from the one in \cite[§\,\textsc{iii}]{Cohen-Lada-May}.} which are
depicted in \tref{Figure}{fig:Pontrjagin-Browder-DL}:
\begin{enumerate}
\item The \emph{Pontrjagin product} is geometrically given by
  by combining two surfaces by a pair of
  pants, or, equivalently, joining two slit domains on a single layer. It
  is a graded-commutative, associative, and unital product
  \[-\cdot - \colon H_i(\fM_{g_1,1}^{m_1}) \otimes H_j(\fM_{g_2,1}^{m_1})
    \longrightarrow H_{i+j}(\fM_{g_1+g_2,1}^{m_1+m_2}).\]
\item The \emph{Browder bracket} of two homology classes $x$ and $y$ geometrically
  corresponds to a full twist of two boxes, ‘filled’ with
  $x$ and $y$, respectively. It is denoted by
  \[[-,-]\colon H_i(\fM_{g_1,1}^{m_1}) \otimes H_j(\fM_{g_2,1}^{m_1})
    \longrightarrow H_{i+j+1}(\fM_{g_1+g_2,1}^{m_1+m_2}).\]
\item If the homological degree of $x$ is even or if we work over $\bbF_2$, then
  we additionally have the \emph{Dyer–Lashof square} $Q(x)$, which geometrically
  corresponds a half-twist of two boxes, both filled with $x$. Thus, $Q$
  is a family of maps
  \[Q \colon H_i(\fM_{g,1}^m) \longrightarrow H_{2i+1}(\fM_{2g,1}^{2m}).\]
\end{enumerate}

\subsubsection{Relations for \texorpdfstring{$E_2$}{E₂}-algebras}
\label{subsubsec:relE2}

There are several ‘universal’ formulæ that involve these operations and which
hold for each $E_2$-algebra. They have been derived in
\cite[§\,\textsc{iii}]{Cohen-Lada-May}; we give a short summary of them
(let $|x|$ be the homological degree of $x$, $(-1)^x\coloneqq (-1)^{|x|}$, and $x'\coloneqq |x|+1$):
\begin{align*}
  x\cdot (y\cdot z) &= (x\cdot y)\cdot z
                      \tag{associativity}\\
  x\cdot y          &= (-1)^{xy}\cdot y\cdot x
                      \tag{graded commutativity}\\
  1\cdot x          &= x\cdot 1 = x\tag{unitality}\\%[.5\baselineskip]
  {[x,y]}           &= (-1)^{xy}\cdot [y,x]
                      \tag{graded commutativity}\\
  [x,1]             &= [1,x] = 0
                      \tag{annihilation}\\
  0                 &=(-1)^{xz'}\cdot [x,[y,z]] + (-1)^{yx'}\cdot [y,[z,x]]
                      + (-1)^{zy'}\cdot [z,[x,y]]
                      \tag{Jacobi}\\
  {}[x,y\cdot z]    &= [x,y]\cdot z + (-1)^{x'y}\cdot y\cdot [x,z]
                      \tag{Leibniz}
\end{align*}
The above relations between the Pontrjagin product and the Browder bracket are often
summarised in the definition of a \emph{Poisson $2$-algebra} or a
\emph{Gerstenhaber algebra}.

If $x$ is an even class, then $Q(x)$ is also defined
integrally, and we have the additional relation $[x,x]=2\cdot Q(x)$.  If we work
over $\bbF_2$, then we additionally have the following relations involving the
Dyer–Lashof square:
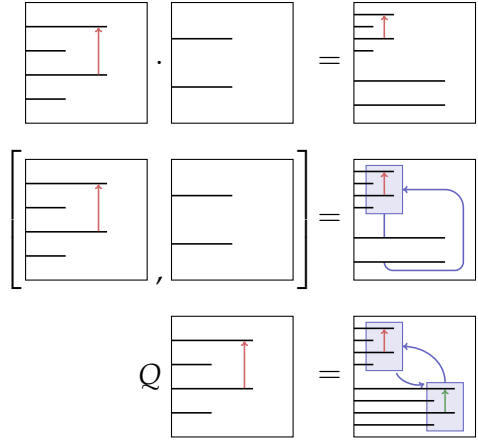
\begin{figure}
  \centering
  \begin{tikzpicture}[scale=1.6]
  \draw (-.1,0) rectangle (.9,1);
  \node at (1,.5) {$\cdot$};
  \draw (1.1,0) rectangle (2.1,1);
  \node at (2.4,.5) {$=$};
  \draw (2.6,0) rectangle (3.6,1);
  \draw[semithick] (.57,.8) -- (-.1,.8);
  \draw[semithick] (.23,.6) -- (-.1,.6);
  \draw[semithick] (.57,.4) -- (-.1,.4);
  \draw[semithick] (.23,.2) -- (-.1,.2);
  \draw[semithick] (1.6,.3) -- (1.1,.3);
  \draw[semithick] (1.6,.7) -- (1.1,.7);
  \draw[-{To[scale=.65,width=1.5mm]},black!30!red!60,semithick] (.5,.405) -- (.5,.795);
  \draw[semithick] (2.93,.9) -- (2.6,.9);
  \draw[semithick] (2.76,.8) -- (2.6,.8);
  \draw[semithick] (2.93,.7) -- (2.6,.7);
  \draw[semithick] (2.76,.6) -- (2.6,.6);
  \draw[-{To[scale=.65,width=1.5mm]},black!30!red!60,semithick] (2.85,.705) -- (2.85,.895);
  \draw[semithick] (3.35,.15) -- (2.6,.15);
  \draw[semithick] (3.35,.35) -- (2.6,.35);
  \node at (-.2,-.8) {$\left[\vbox to 10mm{}\right.$};
  \draw[shift={(0,-1.3)}] (-.1,0) rectangle (.9,1);
  \node at (1,-1.3) {$,$};
  \draw[shift={(0,-1.3)}] (1.1,0) rectangle (2.1,1);
  \node at (2.2,-.8) {$\left.\vbox to 10mm{}\right]$};
  \node at (2.4,-.8) {$=$};
  \draw[shift={(0,-1.3)}] (2.6,0) rectangle (3.6,1);
  \draw[semithick,shift={(-.1,-1.3)}] (.67,.8) -- (0,.8);
  \draw[semithick,shift={(-.1,-1.3)}] (.33,.6) -- (0,.6);
  \draw[semithick,shift={(-.1,-1.3)}] (.67,.4) -- (0,.4);
  \draw[semithick,shift={(-.1,-1.3)}] (.33,.2) -- (0,.2);
  \draw[semithick,shift={(0,-1.3)}] (1.6,.3) -- (1.1,.3);
  \draw[semithick,shift={(0,-1.3)}] (1.6,.7) -- (1.1,.7);
  \draw[thin,black!40!blue!60,fill=black!40!blue!10,shift={(0,-1.3)}] (2.7,.55) rectangle (3,.95);
  \draw[semithick,black!40!blue!60,shift={(0,-1.3)}] (2.85,.55) -- (2.85,.35);
  \draw[semithick,black!40!blue!60,-{To[scale=.65,width=1.5mm]},shift={(0,-1.3)}] (2.85,.15) to[out=-90,in=180] (2.92,.08) -- (3.43,.08) to[out=0,in=-90] (3.5,.15) -- (3.5,.6) to[out=90,in=0] (3.35,.75) -- (3,.75);
  \draw[-{To[scale=.65,width=1.5mm]},black!30!red!60,semithick,shift={(0,-1.3)}] (.5,.405) -- (.5,.795);
  \draw[semithick,shift={(0,-1.3)}] (2.93,.9) -- (2.6,.9);
  \draw[semithick,shift={(0,-1.3)}] (2.76,.8) -- (2.6,.8);
  \draw[semithick,shift={(0,-1.3)}] (2.93,.7) -- (2.6,.7);
  \draw[semithick,shift={(0,-1.3)}] (2.76,.6) -- (2.6,.6);
  \draw[-{To[scale=.65,width=1.5mm]},black!30!red!60,semithick,shift={(0,-1.3)}] (2.85,.705) -- (2.85,.895);
  \draw[semithick,shift={(0,-1.3)}] (3.35,.15) -- (2.6,.15);
  \draw[semithick,shift={(0,-1.3)}] (3.35,.35) -- (2.6,.35);
  \node at (.92,-2.1) {$Q$};
  \draw[shift={(0,-2.6)}] (1.1,0) rectangle (2.1,1);
  \node at (2.4,-2.1) {$=$};
  \draw[shift={(0,-2.6)}] (2.6,0) rectangle (3.6,1);
  \draw[semithick,shift={(1.1,-2.6)}] (.67,.8) -- (0,.8);
  \draw[semithick,shift={(1.1,-2.6)}] (.33,.6) -- (0,.6);
  \draw[semithick,shift={(1.1,-2.6)}] (.67,.4) -- (0,.4);
  \draw[semithick,shift={(1.1,-2.6)}] (.33,.2) -- (0,.2);
  \draw[thin,black!40!blue!60,fill=black!40!blue!10,shift={(0,-2.6)}] (2.7,.55) rectangle (3,.95);
  \draw[thin,black!40!blue!60,fill=black!40!blue!10,shift={(0,-2.6)}] (3.2,.05) rectangle (3.5,.45);
  \draw[semithick,black!40!blue!60,shift={(0,-2.6)},-{To[scale=.65,width=1.5mm]},looseness=.8] (2.95,.53) to[out=-70,in=200] (3.18,.44);
  \draw[semithick,black!40!blue!60,-{To[scale=.65,width=1.5mm]},shift={(0,-2.6)}] (3.35,.45) to[out=90,in=0] (3,.75);
  \draw[-{To[scale=.65,width=1.5mm]},black!30!red!60,semithick,shift={(1.2,-2.6)}] (.5,.405) -- (.5,.795);
  \draw[semithick,shift={(0,-2.6)}] (2.93,.9) -- (2.6,.9);
  \draw[semithick,shift={(0,-2.6)}] (2.76,.8) -- (2.6,.8);
  \draw[semithick,shift={(0,-2.6)}] (2.93,.7) -- (2.6,.7);
  \draw[semithick,shift={(0,-2.6)}] (2.76,.6) -- (2.6,.6);
  \draw[-{To[scale=.65,width=1.5mm]},black!30!red!60,semithick,shift={(0,-2.6)}] (2.85,.705) -- (2.85,.895);
  \draw[-{To[scale=.65,width=1.5mm]},black!60!green!60,semithick,shift={(.5,-3.1)}] (2.85,.705) -- (2.85,.895);
  \draw[semithick,shift={(0,-2.6)}] (3.26,.1) -- (2.6,.1);
  \draw[semithick,shift={(0,-2.6)}] (3.43,.2) -- (2.6,.2);
  \draw[semithick,shift={(0,-2.6)}] (3.26,.3) -- (2.6,.3);
  \draw[semithick,shift={(0,-2.6)}] (3.43,.4) -- (2.6,.4);
\end{tikzpicture}
  \caption{Instances of the Pontrjagin product
    $\smash{H_1(\frM_{1,1})\otimes H_0(\frM_{0,1}^1)\to H_1(\frM_{1,1}^1)}$,
    the Browder bracket
    $\smash{H_1(\frM_{1,1})\otimes H_0(\frM_{0,1}^1)\to H_2(\frM_{1,1}^1)}$,
    and the Dyer–Lashof square \mbox{$H_1(\frM_{1,1})\to
    H_3(\frM_{2,1})$}. In the last example, the depicted class is supported on the mapping
    torus of the twist on $S^1\times S^1$.}\label{fig:Pontrjagin-Browder-DL}
\end{figure}
\begin{align*}
  [x,x]       &= 2\cdot Q(x) = 0     \tag{divisibility of the bracket}\\
  Q(0)        &= Q(1)=0            \tag{nullification}\\
  Q(x+y)      &= Q(x) + [x,y] + Q(y)   \tag{non-linearity}\\
  Q(x\cdot y) &= Q(x)\cdot y^2 + x\cdot [x,y]\cdot y + x^2\cdot Q(y)\tag{Cartan}\\
  [Q(x),y]    &= [x,[x,y]]
\end{align*}
We finally mention the \emph{Nishida relations}, which involve the dual Steenrod
squares in homology $\smash{\mathrm{Sq}{}_t\colon H_\bullet(\frM_{g,1}^m;\bF_2)\to
H_{\bullet-t}(\frM_{g,1}^m;\bF_2)}$, namely%
\begin{align*}
  \mathrm{Sq}{}_t(x\cdot y)= \sum_{i=0}^t\mathrm{Sq}{}_ix\cdot \mathrm{Sq}{}_{t-i}y\qquad\quad
  \mathrm{Sq}{}_t[x,y]     = \sum_{i=0}^t[\mathrm{Sq}{}_ix,\mathrm{Sq}{}_{t-i}y],
\end{align*}
which follow directly from the naturality of the $\mathrm{Sq}_t$ and the
Cartan formula, and
\begin{align*}
  \mathrm{Sq}{}_{2t}(Q(x))   &= Q(\mathrm{Sq}{}_tx) + \sum_{i=0}^{t-1}
                             [\mathrm{Sq}{}_ix,\mathrm{Sq}{}_{2t-i}x],\\
  \mathrm{Sq}{}_{2t+1}(Q(x)) &= (\mathrm{Sq}{}_tx)^2 + \sum_{i=0}^t
                             [\mathrm{Sq}{}_ix,\mathrm{Sq}{}_{2t+1-i}x].
\end{align*}
One particular example of a dual Steenrod square is the Bockstein $\beta$ for
the short exact coefficient sequence $0\to \Z_2\to \Z_4\to \Z_2\to 0$, which
agrees with the Steenrod square $\mathrm{Sq}_1$. In this case, the last Nishida
relations is of the form $\beta Q(x)=x^2+[x,\beta x]$.
\subsubsection{A splitting modulo 2}
\label{subsubsec:splitMod2}

Note that $\coprod_{g,m}\frM_{g,1}^m$ has not only $\coprod_g\frM_{g,1}$ as a
$\scC_2$-subalgebra, but also the collection $\coprod_m\frM_{0,1}^m$. With
coefficients in $\bF_2$, its homology is called the \emph{Dyer–Lashof algebra},
and it has an easy description due to \cite[§\,\textsc{iii}]{Cohen-Lada-May}: if
$\bfa\in H_0(\frM_{0,1}^1)$ is the ground class, then we have an isomorphism
$\bigoplus_m H_\bullet(\frM_{0,1}^m;\bF_2)\cong \bbF_2[Q^j\bfa]_{j\ge 0}$.  Via
the Pontrjagin product, $\bigoplus_{g,m}H_\bullet(\frM_{g,1}^m;\bF_2)$ is a
module over Dyer–Lashof algebra, and based on
\cite{CFB-Cohen-Taylor,CFB-Tillmann-01}, Bianchi \cite[Thm.\,6.5]{Bianchi}
recently has established an isomorphism
\begin{align}\label{eq:Andrea}\bigoplus_{m\ge 0} H_\bullet(\frM_{g,1}^m;\bF_2)
  \cong \bbF_2[Q^j\bfa]_{j\ge 0}\otimes H_\bullet(\frM_{g,1};\on{Sym}\hspace*{1px}\caH)\end{align}
of modules over the Dyer–Lashof algebra, where
$\on{Sym}\hspace*{1px}\caH=\bigoplus_k \mathrm{Sym}^k\caH$ is the symmetric
algebra over the $\Gamma_{g,1}$-representation $H_1(F;\bbF_2)$, with $F$ a
surface of genus $g$. This isomorphism is bigraded by the number of punctures
and by homological degree: here each class in
$H_i(\frM_{g,1};\mathrm{Sym}^k\caH)$ as $k$ punctures and homological degree
$i+k$, and the Dyer–Lashof algebra is bigraded as
usual.\looseness-1 %

\subsection{Rotating the boundary curve}
\label{subsec:Rot}

We have a (generally non-free) action $\rho\colon S^1\times \frM_{\smash{g,1}}^m\to \frM_{\smash{g,1}}^m$ by
rotating the parametrisation of the single boundary curve, or, in other words,
rotating the non-zero tangent vector $X\in T_Q F$ of a conformal class
$[F,\caP,(Q,X)]$. This gives rise to a homology operation\looseness-1
\[R\colon H_i(\frM_{g,1}^m)\longrightarrow H_{i+1}(\frM_{g,1}^m),\quad
R(x)\coloneqq\rho_*([S^1]\times x),\]
called the \emph{rotation}, where $[S^1]$ is the fundamental class of the 
circle. Note that $R\circ R=0$, just because the Pontrjagin product
$[S^1]\cdot [S^1]$ lives in $H_2(S^1)=0$.

This rotation operation is due to the fact that the action of the little
$2$-discs operad $\scC_2$ on $\coprod_{g,m}\frM_{\smash{g,1}}^m$ can be extended
to an action of the \emph{framed} little $2$-discs operad
$\scC_{\smash{2}}^{\text{fr}}$ by allowing parametrisations of the boundary
curves. As a consequence, we obtain several formulæ, which are induced by the
internal structure of $\scC_{\smash{2}}^{\text{fr}}$:
\begin{align*}
  R(1)        &= 0,\\
  R(x\cdot y) &= R(x)\cdot y + [x,y] + (-1)^x\cdot x\cdot R(y).
\end{align*}
Here the first one follows from the fact that $\scC_{\smash 2}^{\text{fr}}(0)$
is contractible, and the second one can be checked by comparing the loop in
$\scC_{\smash 2}^{\text{fr}}(2)$ that describes the rotation with the three
loops describing the summands on the right side. From these two formulæ and from
$R^2=0$, it formally follows that
\begin{align*}
  [R(x),y] = R(R(x)\cdot y) + (-1)^{x}\cdot R(x)\cdot R(y).
\end{align*}
Let us point out that $R(x)$ is very often trivial: recall that
$\frM_{\smash{g,1}}^m$ is a classifying space for the mapping class group
$\Gamma_{\smash{g,1}}^m$, and capping the boundary curve with a disc gives rise
to a map
$\smash{\delta\colon H_\bullet(\Gamma_{g,1}^m)\to H_\bullet(\Gamma_{g,0}^m)}$,
which is surjective for $\smash{\bullet\le \frac23\hspace*{1px}g+1}$ and
injective for $\smash{\bullet\le\frac23\hspace*{1px}g}$ by Harer’s stability
theorem.  On the other hand, the above $S^1$-action clearly becomes homotopy
trivial after capping with a disc, so the image of $R$ lies in the kernel of
$\delta$. It follows that $R(x)=0$ for $x\in H_\bullet(\frM_{g,1}^m)$ with
$\bullet\le \frac23\hspace*{1px}g-1$.  We will encounter several non-trivial
examples of $R(x)$ in \tref{Remark}{rem:Rotation}.

\subsection{Permuting curves and gluing pairs of pants}
\label{subsec:FinB}

If we widen our view for a moment and consider moduli spaces of surfaces with
\emph{multiple} boundary curves, then we obtain, for each $n\ge 1$, an action of
the symmetric group $\frS_n$ on $\frM_{g,n}^m$ by permuting boundary curves, or,
in other words, on $\frP_{g,n}^m$ by exchanging layers of slit pictures.
For a permutation $\sigma\in \frS_n$, we denote the induced homology operation
by $\sigma_*\colon H_\bullet(\frM_{g,n}^m)\to H_\bullet(\frM_{g,n}^m)$.
Secondly, we get, for each $1\le l\le n-1$, a map\looseness-1
\[s^l\colon \frM_{g,n}^m\longrightarrow \frM_{g+1,n-1}^m\]
by gluing a pair of pants to the $l$\textsuperscript{th} and the
$(l+1)$\textsuperscript{st} boundary curve. In terms of slit pictures, this is
the same as joining the $l$\textsuperscript{th} and the
$(l+1)$\textsuperscript{st} layer of the slit picture on a single new layer, see
\tref{Figure}{fig:Codegeneracy}. The codegeneracies constitute one of the three
classes of maps in Harer’s stability theorem: the induced maps\footnote{Again,
  the punctured case follows from the unpunctured one, see \cite{Hanbury}.}
$\smash{s^l_*\colon H_\bullet(\frM_{\smash{g,n}}^m)\to H_\bullet(\frM_{\smash{g+1,n-1}}^m)}$ are
surjective for $\smash{\bullet\le \frac23\hspace*{1px}g+\frac13}$ and isomorphisms for
$\smash{\bullet\le \frac23\hspace*{1px} g-\frac23}$.\looseness-1

In \cite[Rmk.\,4.4.19]{Kranhold}, it is shown that
$s^ks^l\simeq s^ls^{k+1}$ for $l\le k$, whence the induced maps in homology
behave like codegeneracies, justifying our choice of notation. Moreover, we have
$s^l\cyc{l,l{+}1}\simeq s^l$, where $\cyc{l,l{+}1}\in \frS_n$ is the
transposition that exchanges $l$ and $l+1$: this can be seen by comparing $\frP$
to different models, see \cite[Rmk.\,5.2.16]{Kranhold}. Altogether, the
assignment $n\mapsto H_\bullet(\frM_{\bullet,n}^{\bullet})$ is functorial with
respect to surjective maps $\{1,\dotsc,n\}\to \{1,\dotsc,n'\}$.

\begin{figure}
  \centering
  \[s^2\,\,\begin{tikzpicture}[scale=1.35,baseline={([yshift=-.5ex]current bounding box.center)}]
    \draw[thin,black!70](0,2) rectangle (.8,2.8);
    \draw[thin,black!70](0,1) rectangle (.8,1.8);
    \draw[thin,black!70](0,0) rectangle (.8,.8);
    \draw[semithick](0,.16) -- (.16,.16);
    \draw[semithick](0,.32) -- (.32,.32);
    \draw[semithick](0,.48) -- (.48,.48);
    \draw[semithick](0,.64) -- (.64,.64);
    \draw[semithick](0,1.267) -- (.48,1.267);
    \draw[semithick](0,1.533) -- (.64,1.533);
    \draw[semithick](0,2.267) -- (.16,2.267);
    \draw[semithick](0,2.533) -- (.32,2.533);
  \end{tikzpicture}~=~\begin{tikzpicture}[scale=1.35,baseline={([yshift=-.5ex]current bounding box.center)}]
    \fill[black!10] (.2,.2) rectangle (.6,.6);
    \draw[very thin,black!35] (.2,.2) rectangle (.6,.6);
    \fill[black!10] (.2,1) rectangle (.6,1.39);
    \draw[very thin,black!35] (.2,1) rectangle (.6,1.39);
    \fill[black!10] (.2,1.41) rectangle (.6,1.8);
    \draw[very thin,black!35] (.2,1.41) rectangle (.6,1.8);
    \draw[thin,black!70](0,1) rectangle (.8,1.8);
    \draw[thin,black!70](0,0) rectangle (.8,.8);
    \draw[semithick](0,.28) -- (.28,.28);
    \draw[semithick](0,.36) -- (.36,.36);
    \draw[semithick](0,.44) -- (.44,.44);
    \draw[semithick](0,.52) -- (.52,.52);
    \draw[semithick](0,1.133) -- (.44,1.133);
    \draw[semithick](0,1.267) -- (.52,1.267);
    \draw[semithick](0,1.533) -- (.28,1.533);
    \draw[semithick](0,1.667) -- (.36,1.667);
  \end{tikzpicture}\]\vspace*{-12px}
  \caption{An instance of $s^2\colon \frP_{0,3}\to \frP_{1,2}$}
  \label{fig:Codegeneracy}
\end{figure}

\subsection{Transfer operations}

In the following, we construct two operations that, visually speaking, use the
homological transfer maps to choose one or several punctures, endow them with
tangential directions, and regard them as new boundary curves.

\subsubsection{The \texorpdfstring{$T$}{T}-operation}
\label{subsubsec:T}
\enlargethispage{\baselineskip}
Let $g\ge 0$ and $m\ge 1$, which means that we have at least one puncture. Then
there is an $m$-sheeted covering
\mbox{$\alpha\colon \frM_{\smash{g,1}}^{m-1,1}\to \frM_{\smash{g,1}}^m$} that
marks one of the punctures.

We also have a bundle map
$\smash{\frM_{\smash{g,1}}^{m-1,1}\to \frM_{\smash{g,1}}^{m-1}}$ by forgetting
the chosen puncture: this is a subbundle of the universal surface bundle over
$\frM_{\smash{g,1}}^{m-1}$, where points in the fibre have to avoid the boundary
and the given $m-1$ punctures. Over $\frM_{\smash{g,1}}^{m-1,1}$, we consider
the vertical unit tangent bundle
\mbox{$\smash{p\colon W^1\coloneqq UT^\bot\frM_{g,1}^{\smash{m-1,1}}\to
  \frM_{\smash{g,1}}^{m-1,1}}$}, which is a $1$-dimensional sphere bundle. Note
that $W^1$ is oriented, as the structure group of $\smash{\frM_{g,1}^{m-1,1}}$
contains only orientation-preserving diffeomorphisms.

We have an equivalence $\theta\colon W^1\to \frM_{\smash{g,2}}^{m-1}$ by using
that an isolated puncture, together with a unit tangential direction \emph{is}
(by our choice of model) the same as a parametrised boundary curve. Finally, we
use the homological transfer and define
\[\begin{tikzcd}
    \hat T\colon H_i(\frM_{g,1}^m)\ar[r,"\alpha^!"] &
    H_i(\frM_{g,1}^{m-1,1})\ar[r,"p^!"] &
    H_{i+1}(W^1)\ar[r,"\theta_*"] &
    H_{i+1}(\frM_{g,2}^{m-1}).\end{tikzcd}\]
As we want to focus on moduli spaces of surfaces with a single boundary curve,
we glue in a pair of pants and define the \emph{$T$-operation} by
\[T\coloneqq s^1_*\circ \hat T\colon H_i(\frM_{g,1}^m)\longrightarrow H_{i+1}(\frM_{g+1,1}^{m-1}).\]
Under the gluing construction $\frP_{\smash{g,1}}^m\to \frM_{\smash{g,1}}^m$,
the map $\theta$ can be visualised in terms of slit domains: if $W^1_\frP$
denotes the pullback of the bundle $W^1$ along the affine bundle
$\frP_{\smash{g,1}}^{m-1,1}\to \frM_{\smash{g,1}}^{m-1,1}$, then an element in
$W^1_\frP$ is given by a slit domain, together with the choice of one of the cycles
of $\sigma_q$ which does not contain $0$, and, on a  \emph{circle} given by
regluing the upper and lower faces of the rectangles inside the chosen cycle, a marked
point. We can turn this left face into an honest boundary curve by providing a
new layer for it: we draw a small slit at the given position and pair it with a
single slit on a new layer. This gives rise to a map
$\smash{\theta_\frP\colon W^1_\frP\to \frP_{\smash{g,2}}^{m-1}}$, covering
$\theta$ up to homotopy. Finally, we have already seen that gluing in a pair of
pants corresponds to joining both layers of the slit picture on a single layer:
thus, we end up with \mbox{\tref{Figure}{fig:T-and-E}.}\looseness-1

If $x$ lies in a moduli space of a surface without punctures,
then we put $T(x)\coloneqq 0$. It follows immediately from the definition that
$T$ satisfies the graded Leibniz rule
\[T(x\cdot y)=(-1)^y\cdot T(x)\cdot y + x\cdot T(y)\]
with respect to the Pontrjagin product. In particular, if $x$ is a class in
$\frM_{g,1}$, then we have $T(x\cdot y)=x\cdot T(y)$, i.e.\ the operation $T$ is
$\bigoplus_g H_\bullet(\frM_{g,1})$-linear.

\subsubsection{The \texorpdfstring{$E$}{E}-operation}
\label{subsubsec:E}

Let $g\ge 0$ and $m\ge 2$. Then we consider the $\binom{m}{2}$-sheeted covering
$\beta\colon \frM_{g,1}^{m-2,2}\to \frM_{g,1}^m$ where two punctures are
separated from the other ones, but unordered. Over the total space, we consider
the torus bundle $q\colon W^2\to \frM_{\smash{g,1}}^{m-2,2}$ given by the symmetric fibre
product of the two vertical tangent bundles at the two
punctures.\footnote{Formally, we consider the double covering
  $\smash{B\coloneqq \frM_{\smash{g,1}}^{m-2,1,1}\to\frM_{\smash{g,1}}^{m-2,2}}$
  where the two points are ordered. There are two unit vertical tangent bundles
  $L$ and $L'$ over $B$ and the the fibre product
  $\smash{L\times_B L'\to \frM_{\smash{g,1}}^{m-2,1,1}}$ is
  $\frS_2$-equivariant. Then $q$ is the induced map on quotients.}

There is a map $\smash{\eta\colon W^2\to \frM_{\smash{g+1,1}}^{m-1}}$ as
follows: via the exponential map, an element in $W^2$ is given by a conformal
class of a surface $F$, a subset $\smash{\caP\subseteq\mathring F}$ of
cardinality $m-2$, two punctures $x_1,x_2\in F$ with small
disjoint discs $D_1,D_2\subseteq\mathring F\setminus\caP$ around them, and
points $x_i'\in \partial D_i$.  If we cut along the two straight lines from
$x_1$ to $x_1'$ and from $x_2$ to $x_2'$ and reglue, then we have identified the
two punctures and increased the genus by $1$. Again, this construction can be
visualised in terms of slit pictures, see \tref{Figure}{fig:T-and-E}.

The torus bundle $q\colon W^2\to \frM_{\smash{g,1}}^{m-2,2}$ is not orientable, but
if we work over $\bbF_2$, then we still have a homological transfer, which can
be used to define
\[\begin{tikzcd}E\colon H_i(\frM_{g,1}^m)\ar[r,"\beta^!"] &
    H_i(\frM_{g,1}^{m-2,2})\ar[r,"q^!"] &
    H_{i+2}(W^2)\ar[r,"\eta_*"] & H_{i+2}(\frM_{g+1,1}^{m-1}).\end{tikzcd}\] 
Let us point out that the $E$-operation can also be applied integrally to ground
classes $x=[*]\in H_0(\frM_{g,n}^m)$. In this case, $E(x)$ is just pushforward of some
fundamental class of $S^1\times S^1$ along $\smash{S^1\times S^1\to W^2\to \frM_{\smash{g+1,n}}^{m-1}}$,
where the first map is some fibre inclusion. Choosing a path that exchanges the
two punctures, we see that $2\cdot E(x)=0$.\vspace*{4px}

\begin{figure}[h]
  \[T
  \raisebox{-21px}{
    \begin{tikzpicture}[scale=.9]
      \draw[thin,dgrey] (0,.2) rectangle (2,-1.8);
      \draw[thick] (0,-1.2) -- (1.5,-1.2);
      \draw[thick] (0,-1.6) -- (1.5,-1.6);
      \draw [dgrey] (0,-.55) -- (1,-.55);
      \draw [dgrey] (0,-.65) -- (.5,-.65);
      \draw [dgrey] (0,-.75) -- (1,-.75);
      \draw [dgrey] (0,-.85) -- (.5,-.85);
    \end{tikzpicture}}=\raisebox{-21px}{\begin{tikzpicture}[scale=.9]
      \draw[dred!60,semithick,-{To[scale=.65,width=1.5mm]}] (.6,-1.6) -- (.6,-1.21);
      \draw[thin,dgrey] (0,.2) rectangle (2,-1.8);
      \draw[thick] (0,-1.2) -- (1.5,-1.2);
      \draw[thick] (0,-1.6) -- (1.5,-1.6);
      \draw [dgrey] (0,-.55) -- (1,-.55);
      \draw [dgrey] (0,-.65) -- (.5,-.65);
      \draw [dgrey] (0,-.75) -- (1,-.75);
      \draw [dgrey] (0,-.85) -- (.5,-.85);
      \draw [thick] (0,0) -- (.5,0);
      \draw [thick,dred] (0,-1.4) -- (.5,-1.4);
    \end{tikzpicture}}
  \qquad\qquad
  E
  \raisebox{-21px}{
    \begin{tikzpicture}[scale=.9]
      \draw[thin,dgrey] (0,0) rectangle (2,-2);
      \draw[thick] (0,-1.4) -- (1.6,-1.4);
      \draw[thick] (0,-1.8) -- (1.6,-1.8);
      \draw[thick] (0,-.2) -- (.8,-.2);
      \draw[thick] (0,-.6) -- (.8,-.6);
      \draw [dgrey] (0,-.85) -- (1.2,-.85);
      \draw [dgrey] (0,-.95) -- (.6,-.95);
      \draw [dgrey] (0,-1.05) -- (1.2,-1.05);
      \draw [dgrey] (0,-1.15) -- (.6,-1.15);
    \end{tikzpicture}}=\raisebox{-21px}{\begin{tikzpicture}[scale=.9]
      \draw[dgreen!60,semithick,-{To[scale=.65,width=1.5mm]}] (.6,-.6) -- (.6,-.21);
      \draw[dred!60,semithick,-{To[scale=.65,width=1.5mm]}] (.6,-1.8) -- (.6,-1.41);
      \draw[thin,dgrey] (0,0) rectangle (2,-2);
      \draw[thick] (0,-1.4) -- (1.6,-1.4);
      \draw[thick] (0,-1.8) -- (1.6,-1.8);
      \draw[thick] (0,-.2) -- (.8,-.2);
      \draw[thick] (0,-.6) -- (.8,-.6);
      \draw [dgrey] (0,-.85) -- (1.2,-.85);
      \draw [dgrey] (0,-.95) -- (.6,-.95);
      \draw [dgrey] (0,-1.05) -- (1.2,-1.05);
      \draw [dgrey] (0,-1.15) -- (.6,-1.15);
      \draw [thick, dgreen] (0,-.4) -- (.5,-.4);
      \draw [thick, dred] (0,-1.6) -- (.5,-1.6);
    \end{tikzpicture}}
\]    
  \caption{The two operations
    $\smash{T\colon H_i(\frP_{1,1}^1)\to H_{i+1}(\frP_{2,1})}$ and
    $\smash{E\colon H_i(\frP_{1,1}^2)\to H_{i+2}(\frP_{2,1}^1)}$, here applied
    to ground classes.}\label{fig:T-and-E}\vspace*{-5px}
\end{figure}

\subsection{Segal–Tillmann maps}

In \cite{Segal-Tillmann}, Segal and Tillmann study maps
$C_{2g+2}(\mathring D^2)\to \frM_{g,2}$ from the configuration space of $2g+2$
unordered points in an open disc to the moduli space of surfaces of genus $g$
and two boundary curves. Because both spaces are aspherical, the map can likewise
be described on the level of fundamental groups, where it is a homomorphism
$\Br_{2g+2} \to \Gamma_{g,2}$ and has the following description that is due
to \cite{Song-Tillmann}: the braid group $\Br_{2g+2}$ is generated by
$\sigma_1,\dotsc,\sigma_{2g+1}$, where $\sigma_i$ interchanges the
$i$\textsuperscript{th} and the $(i+1)$\textsuperscript{st} string. We choose
$D_1,\dotsc,D_{2g+1}\in\Gamma_{g,2}$ to be the Dehn twists about the curves
$\alpha_1,\dotsc,\alpha_{2g+1}$ as in \tref{Figure}{fig:STDehn}. Then two
consecutive curves intersect exactly once while all other pairs of curves do not
intersect at all. Therefore, the mapping classes $D_1,\dotsc,D_{2g+1}$ satisfy
the braid relations and we obtain the desired homomorphism. Its image is
contained in the symmetric mapping class group, which was studied in \cite{Birman-Hilden}.

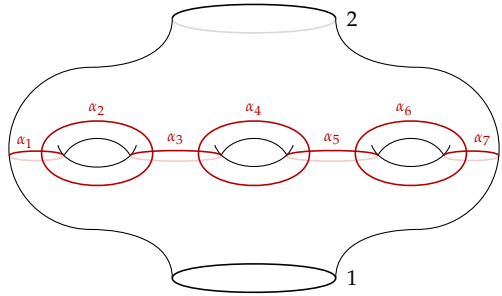
\begin{figure}[h]
  \centering
  \begin{tikzpicture}[scale=4.3]
  \draw[semithick,black!30!red!20,looseness=.3](-1.249,-1.373)
  to[out=-90,in=-90](-1.082,-1.373);
  \draw[semithick,black!30!red!20,looseness=.2](-.878,-1.373)
  to[out=-90,in=-90] (-.6,-1.373);
  \draw[semithick,black!30!red!20,looseness=.2](-.4,-1.373)
  to[out=-90,in=-90] (-.121,-1.373);
  \draw[semithick,black!30!red!20,looseness=.3](.249,-1.373)
  to[out=-90,in=-90] (.081,-1.373);
  \draw[black!15,looseness=.3,semithick] (-.25,-.95) to[out=-90,in=-90] (-.75,-.95);
  \draw (0,-1.1) to[out=180,in=-90] (-.25,-.95);
  \draw[looseness=.3,semithick] (-.25,-.95) to[out=90,in=90] (-.75,-.95);
  \draw (-.75,-.95) to[out=-90,in=0] (-1,-1.1)
  to[out=180,in=0] (-1,-1.1) to[out=180,in=90] (-1.25,-1.35)
  to[out=-90,in=180] (-1,-1.6) to[out=0,in=90] (-.75,-1.75);
  \draw (-.25,-1.75) to[out=90,in=180] (0,-1.6);
  \draw[looseness=.3,semithick] (-.25,-1.75) to[in=90,out=90] (-.75,-1.75)
  to[in=-90,out=-90] (-.25,-1.75);
  \draw (0,-1.6) to[out=0,in=180] (0,-1.6)
  to[out=0,in=-90] (.25,-1.35) to[out=90,in=0] (0,-1.1);
  \draw[shift={(-.48,-.005)}] (-.60,-1.365) to[out=60,in=120] (-.40,-1.365);
  \draw[shift={(-.48,0)}] (-.62,-1.34) to[out=-75,in=-105] (-.38,-1.34);
  \draw[shift={(0,-.005)}] (-.60,-1.365) to[out=60,in=120] (-.40,-1.365);
  \draw[shift={(0,0)}] (-.62,-1.34) to[out=-70,in=-110] (-.38,-1.34);
  \draw[shift={(-.02,-.005)}] (-.1,-1.365) to[out=60,in=120] (.1,-1.365);
  \draw[shift={(-.02,0)}] (-.12,-1.34) to[out=-70,in=-110] (.12,-1.34);
  \draw[semithick,black!30!red,looseness=.3] (-1.249,-1.373) to[out=90,in=90] (-1.082,-1.373);
  \draw[semithick,black!30!red,looseness=.2] (-.878,-1.373) to[out=90,in=90] (-.6,-1.373);
  \draw[semithick,black!30!red,looseness=.2] (-.4,-1.373) to[out=90,in=90] (-.121,-1.373);
  \draw[semithick,black!30!red,looseness=.3] (.249,-1.373) to[out=90,in=90] (.081,-1.373);
  \draw[semithick,black!30!red] (-1.15,-1.365) to[out=90,in=90] (-.81,-1.365)
  to[out=-90,in=-90] (-1.15,-1.365);
  \draw[semithick,black!30!red,shift={(.48,0)}] (-1.15,-1.365)
  to[out=90,in=90] (-.81,-1.365) to[out=-90,in=-90] (-1.15,-1.365);
  \draw[semithick,black!30!red,shift={(.96,0)}] (-1.15,-1.365)
  to[out=90,in=90] (-.81,-1.365) to[out=-90,in=-90] (-1.15,-1.365);
  \node[black!30!red] at (-1.2,-1.33)   {\tiny $\alpha_1$};
  \node[black!30!red] at (-.739,-1.32)  {\tiny $\alpha_3$};
  \node[black!30!red] at (-.2605,-1.32) {\tiny $\alpha_5$};
  \node[black!30!red] at (.2,-1.323)    {\tiny $\alpha_7$};
  \node[black!30!red] at (-.98,-1.23)   {\tiny $\alpha_2$};
  \node[black!30!red] at (-.5,-1.23)    {\tiny $\alpha_4$};
  \node[black!30!red] at (-.04,-1.23)   {\tiny $\alpha_6$};
  \node at (-.2,-1.75) {\scriptsize $1$};
  \node at (-.2,-.95) {\scriptsize $2$};
\end{tikzpicture}
  \caption{The curves $\alpha_1,\dotsc,\alpha_7$ defining the images of the
    elementary braids $\sigma_1,\dotsc,\sigma_7$ along the group homomorphism
    $\Br_8\to\Gamma_{3,2}$.}\label{fig:STDehn}
\end{figure}

For our purposes, it is convenient to cap off the second boundary curves by
gluing a disc, i.e.\ postcomposing with the capping homomorphism
\mbox{$\Gamma_{g,2}\to \Gamma_{g,1}$}. We call the composition
$\mathrm{ST}\colon \Br_{2g+2}\to \Gamma_{g,1}$, as well as its topological
analogue \mbox{$\mathrm{ST}\colon C_{2g+2}(\mathring D^2)\to \frM_{g,1}$}, the
\emph{Segal–Tillmann map}. The main result of \cite{Song-Tillmann} tells us that
the induced map in homology is trivial in the stable range.\looseness-1

\subsection{Vertical Browder brackets}
\label{subsec:vertB}

The boundary permutations and codegeneracies from §\,\ref{subsec:FinB} are part
of a larger operadic action on moduli spaces with multiple boundary curves: in
\cite{Kranhold}, the $\scC_2$-action on $\coprod_{g,m}\frP_{g,1}^m$ has
been generalised to the case of multiple boundary curves, ending up with a
coloured operad, called the \emph{vertical operad} $\scV_{1,1}$, acting on the
sequence $(\coprod_{g,m}\frP_{g,n}^m)_{n\ge 1}$.

Both the internal structure of $\scV_{1,1}$ and the explicit
construction is rather lengthy; we will only use one specific class
of operations that arise from it: it generalises the classical Browder
bracket and is of the form
\[[-,-]\colon H_i(\frM_{g_1,n_1}^{m_1})\otimes H_j(\frM_{g_2,n_2}^{m_2})
  \longrightarrow H_{i+j+1}(\frM_{g_1+g_2,n_1+n_2-1}^{m_1+m_2}).\]
We call it the \emph{vertical Browder bracket}, see
\cite[Def.\,4.4.1\,\&\,Constr.\,5.2.15]{Kranhold}. Pictorially, it takes a slit
domain $x$ on $n_1$ layers and a slit domain $y$ on $n_2$ layers, places the
respective first layers of both arguments on the first layer of a new slit
domain, puts the remaining $n_1-1$ layers of $x$ on the layers $2,\dotsc,n_1$,
and the remaining $n_2-1$ layers of $y$ on the layers $n_1+1,\dotsc,n_1+n_2-1$,
and lets the respective first layers spin around each other, see
\tref{Figure}{fig:Vertical-Browder}. If $n_1=n_2=1$, then we recover
the classical Browder bracket.\looseness-1

As for $E_2$-algebras, there are several universal formulæ involving the
vertical Browder brackets and the boundary permutations and codegeneracies from
§\,\ref{subsec:FinB}, see \cite[Prop.\,4.4.16]{Kranhold}. We will only be using the
following relation\footnote{For general $\scV_{1,1}$-algebras, the second
  summand in formula (\ref{eq:V1}) is
  $s^1_*\tau^{\vphantom{\smash 1}}_*[\tau^{\vphantom{\smash 1}}_*x,y]$. However, for the
  special algebra $\smash{(\coprod_{g,m}\frP_{g,n}^m)_{n\ge 1}}$, codegeneracies and
  permutations satisfy all relations to assemble into a functor as in
  §\,\ref{subsec:FinB}, so we can use that $s^1\circ \tau$ and
  $s^1$ agree \emph{as maps} $\{1,2\}\to \{1\}$.}, with $\tau\coloneqq \cyc{1,2}\in\frS_{n_1+n_2-1}$:
\begin{align}
  [s^1_*x,y] &= s^{\smash 1}_*[x,y]+s^{\smash 1}_*[\tau^{\vphantom{\smash 1}}_*x,y].\label{eq:V1}
\end{align}

\begin{tFigure}
  \centering
  \[
  \mathopen{}\left[\vbox to 1.5cm{}\right.\mathclose{}\begin{tikzpicture}[scale=1.4,baseline={([yshift=-.5ex]current bounding box.center)}]
    \draw[thin,black!30!red!50](0,1) rectangle (.8,1.8);
    \draw[thin,black!30!red!50](0,0) rectangle (.8,.8);
    \draw[semithick,black!30!red](0,.267) -- (.16,.267);
    \draw[semithick,black!30!red](0,.533) -- (.32,.533);
    \draw[semithick,black!30!red](0,1.16) -- (.64,1.16);
    \draw[semithick,black!30!red](0,1.32) -- (.64,1.32);
    \draw[semithick,black!30!red](0,1.48) -- (.16,1.48);
    \draw[semithick,black!30!red](0,1.64) -- (.32,1.64);
  \end{tikzpicture}~\raisebox{-30px}{,}~\begin{tikzpicture}[scale=1.4,baseline={([yshift=-.5ex]current bounding box.center)}]
    \draw[thin,black!40!green!50](0,1) rectangle (.8,1.8);
    \draw[thin,black!40!green!50](0,0) rectangle (.8,.8);
    \draw[semithick,black!40!green](0,.267) -- (.52,.267);
    \draw[semithick,black!40!green](0,.533) -- (.36,.533);
    \draw[semithick,black!40!green](0,1.16) -- (.52,1.16);
    \draw[semithick,black!40!green](0,1.64) -- (.36,1.64);
  \end{tikzpicture}\mathopen{}\left.\vbox to 1.5cm{}\right]\mathclose{} =~ \begin{tikzpicture}[scale=1.4,baseline={([yshift=-.5ex]current bounding box.center)}]
    \draw[black!30!red!45,very thin,fill=black!30!red!10] (0,0) rectangle (.267,.267);
    \draw[black!30!red!45,very thin,fill=black!30!red!10] (0,1) rectangle (.267,1.267);
    \draw[black!40!green!50,very thin,fill=black!40!green!10] (.267,.267) rectangle (.533,.533);
    \draw[black!40!green!50,very thin,fill=black!40!green!10] (.267,2.267) rectangle (.533,2.533);
    \draw[black!30!red!60,semithick] (.267,.133) -- (.4,.133) to[out=0,in=-90] (.667,.4) to[out=90,in=0] (.4,.667)
    to[out=180,in=87] (.138,.44);
    \draw[black!30!red!60,semithick,-{To[scale=.65,width=1.5mm]}] (.267,1.133) -- (.4,1.133) to[out=0,in=-90] (.667,1.4) to[out=90,in=0] (.4,1.667)
    to[out=180,in=90] (.133,1.4) -- (.133,1.267);
    \draw[black!30!red!60,semithick,-{To[scale=.65,width=1.5mm]}] (.133,.355) -- (.133,.267);
    \draw[black!30!red!60,semithick] (.138,2.48) to[out=-93,in=90] (.133,2.32);
    \draw[thin,dgrey](0,2) rectangle (.8,2.8);
    \draw[thin,dgrey](0,1) rectangle (.8,1.8);
    \draw[thin,dgrey](0,0) rectangle (.8,.8);
    \draw[semithick,black!30!red](0,1.0533) -- (.2134,1.0533);
    \draw[semithick,black!30!red](0,1.1067) -- (.2134,1.1067);
    \draw[semithick,black!30!red](0,1.16) -- (.0533,1.16);
    \draw[semithick,black!30!red](0,1.2133) -- (.1067,1.2133);
    \draw[semithick,black!40!green](0,2.32) -- (.44,2.32);
    \draw[semithick,black!40!green](0,2.48) -- (.387,2.48);
    \draw[semithick,black!30!red](0,.08867) -- (.0533,.08867);
    \draw[semithick,black!30!red](0,.1773) -- (.1067,.1773);
    \draw[semithick,black!40!green](0,.355) -- (.4403,.355);
    \draw[semithick,black!40!green](0,.444) -- (.387,.444);
  \end{tikzpicture}
\]\vspace*{-10px}
  \caption{An instance of
    $[-{,}-]\colon \frP_{0,2}^1\times \frP^{\vphantom 1}_{0,2}\to
    \on{map}(S^1,\frP_{0,3}^1)$.}\label{fig:Vertical-Browder}
\end{tFigure}

\section{Generators}
\label{sec:generators}
In this section, we describe several explicit generators of
$H_\bullet(\frM_{\bullet,1}^\bullet)$, which we call $\bfa$, $\bfb$, $\bfc$,
$\bfd$, $\bfe$, $\bff$, $\bfs$, and $\bfv$. Most of them can easily be visualised
in terms of slit pictures. Even though we will show several relations among
these generators in §\,\ref{sec:relations}, we discuss some immediate properties
of these classes directly after having introduced them.\vspace*{2px}

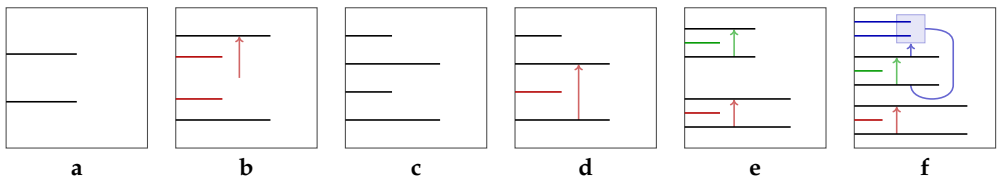
\begin{figure}[h]
  \centering
  \begin{tikzpicture}[scale=1.86]
  \draw[black!70,thin] (0,0) rectangle (1,1);
  \draw[black!70,thin] (1.2,0) rectangle (2.2,1);
  \draw[black!70,thin] (2.4,0) rectangle (3.4,1);
  \draw[black!70,thin] (3.6,0) rectangle (4.6,1);
  \draw[black!70,thin] (4.8,0) rectangle (5.8,1);
  \draw[black!70,thin] (6,0) rectangle (7,1);
  \node at (.5,-.15) {\footnotesize $\bfa\mathstrut$};
  \node at (1.7,-.15) {\footnotesize $\bfb\mathstrut$};
  \node at (2.9,-.15) {\footnotesize $\bfc\mathstrut$};
  \node at (4.1,-.15) {\footnotesize $\bfd\mathstrut$};
  \node at (5.3,-.15) {\footnotesize $\bfe\mathstrut$};
  \node at (6.5,-.15) {\footnotesize $\bff\mathstrut$};
  \draw[black!30!red!60,semithick,-{To[scale=.75,width=1.5mm]}] (1.65,.5) -- (1.65,.795);
  \draw[black!30!red!60,semithick,-{To[scale=.75,width=1.5mm]}] (4.05,.2) -- (4.05,.595);
  \fill[black!30!blue!10] (6.3,.75) rectangle (6.5,.95);
  \draw[black!30!blue!35,very thin] (6.3,.75) rectangle (6.5,.95);
  \draw[black!30!blue!60,semithick] (6.5,.85) to[out=0,in=90] (6.7,.75) -- (6.7,.45) to[out=-90,in=0] (6.55,.35) to[out=180,in=-90] (6.4,.45);
  \draw[black!30!blue!60,semithick,-{To[scale=.75,width=1.5mm]}] (6.4,.65) -- (6.4,.745);
  \draw[black!30!red!60,semithick,-{To[scale=.75,width=1.5mm]}] (6.3,.1) -- (6.3,.295);
  \draw[black!40!green!60,semithick,-{To[scale=.75,width=1.5mm]}] (6.3,.45) -- (6.3,.645);
  \draw[black!30!red!60,semithick,-{To[scale=.75,width=1.5mm]}] (5.15,.15) -- (5.15,.345);
  \draw[black!40!green!60,semithick,-{To[scale=.75,width=1.5mm]}] (5.15,.65) -- (5.15,.845);
  \draw[semithick] (.5,.67) -- (0,.67);
  \draw[semithick] (.5,.33) -- (0,.33);
  \draw[semithick] (1.87,.8) -- (1.2,.8);
  \draw[black!30!red,semithick] (1.53,.65) -- (1.2,.65);
  \draw[black!30!red,semithick] (1.53,.35) -- (1.2,.35);
  \draw[semithick] (1.87,.2) -- (1.2,.2);
  \draw[semithick] (2.73,.8) -- (2.4,.8);
  \draw[semithick] (3.07,.6) -- (2.4,.6);
  \draw[semithick] (2.73,.4) -- (2.4,.4);
  \draw[semithick] (3.07,.2) -- (2.4,.2);
  \draw[semithick] (3.93,.8) -- (3.6,.8);
  \draw[semithick] (4.27,.6) -- (3.6,.6);
  \draw[semithick,black!30!red] (3.93,.4) -- (3.6,.4);
  \draw[semithick] (4.27,.2) -- (3.6,.2);
  \draw[semithick] (6.8,.1) -- (6,.1);
  \draw[semithick] (6.8,.3) -- (6,.3);
  \draw[semithick] (6.6,.45) -- (6,.45);
  \draw[semithick] (6.6,.65) -- (6,.65);
  \draw[semithick,black!30!red] (6.2,.2) -- (6,.2);
  \draw[semithick,black!40!green] (6.2,.55) -- (6,.55);
  \draw[semithick,black!30!blue] (6.4,.8) -- (6,.8);
  \draw[semithick,black!30!blue] (6.4,.9) -- (6,.9);
  \draw[semithick] (5.55,.15) -- (4.8,.15);
  \draw[semithick] (5.55,.35) -- (4.8,.35);
  \draw[semithick] (5.3,.65) -- (4.8,.65);
  \draw[semithick] (5.3,.85) -- (4.8,.85);
  \draw[semithick,black!30!red]   (5.05,.25) -- (4.8,.25);
  \draw[semithick,black!40!green] (5.05,.75) -- (4.8,.75);
\end{tikzpicture}\vspace*{-4px}
  \caption{The generators $\bfa$, $\bfb$, $\bfc$, $\bfd$, $\bfe$, and $\bff$,
    depicted as maps from tori into the space of slit domains: the first
    toric factor is drawn in \textcolor{black!30!red}{red}, the second one in
    \textcolor{black!40!green}{green}, and the third one in
    \textcolor{black!30!blue}{blue}.}\label{fig:Generators-abcdef}
\end{figure}

\subsection{Generator \texorpdfstring{$\bfa$}{a}}

Our first generator is the integral ground class $\bfa \in H_0(\fM_{0,1}^1)$,
which already occured in §\,\ref{subsubsec:splitMod2} during the description of the
Dyer–Lashof algebra. Restricting the Pontrjagin product to $\bfa$ in one
factor defines a homology operation\looseness-1
\[\bfa\cDot-\colon H_\bullet(\fM_{g,1}^m)\longrightarrow H_\bullet(\fM_{g,1}^{m+1}).\]
of degree zero. Since adding a puncture $\frM_{g,1}^m\to \frM_{g,1}^{m+1}$
admits a stable splitting by \cite[Thm.\,1.3]{CFB-Tillmann-01}, this
homology operation is split injective, even integrally.

\subsection{Generator \texorpdfstring{$\bfb$}{b}}\label{subsec:b}
Since $\bfa$ is of even degree, we can consider the Dyer–Lashof square
$\bfb \coloneqq Q\bfa \in H_1(\fM_{0,1}^2)$ integrally. Then $\bfb$ is a
fundamental class of $\frM_{0,1}^2\simeq S^1$, and hence a generator.\looseness-1

Multiplying with $\bfb$ gives rise to a homology operation
$H_\bullet(\frM_{g,1}^m)\to H_{\bullet+1}(\frM_{g,1}^{m+2})$ of degree $1$. In
contrast to multiplying with $\bfa$, this operation is not injective integrally:
we have $2\bfb^2=0$ by graded commutativity, while $\bfb$ has infinite
order. However, for coefficients in $\bbF_2$, this operation is indeed
injective: this follows directly from (\ref{eq:Andrea}).

\subsection{Generator \texorpdfstring{$\bfc$}{c}}
The next generator is the ground class $\bfc \in H_0(\fM_{1,1})$. The
restriction of the multiplication to
$\smash{\bfc\cDot -\colon H_\bullet(\fM_{\smash{g,1}}^m)\to H_\bullet(\fM_{\smash{g+1,1}}^{m})}$
is the classical genus-stabilisation map, which is surjective for
$\bullet\le\frac23\hspace*{1px}g$ and an isomorphism for
$\bullet\le\frac23\hspace*{1px}g-\frac23$ by Harer’s stability theorem, as
already mentioned in the introduction.

One should point out that $\bfc$ lifts to a class in $\frM_{0,2}$: if we
consider the ground class $\bfc_2\in H_0(\frM_{0,2})$, see
\tref{Figure}{fig:Lifted-Generators}, then we clearly have $\bfc = s^1_*\bfc_2$

\subsection{Generator \texorpdfstring{$\bfd$}{d}}
We consider the class $\bfd\coloneqq T\bfa\in H_1(\frM_{1,1};\Z)$, which can be
depicted by the embedded circle shown in \tref{Figure}{fig:Generators-abcdef}.
Even though the computer-aided calculations in \cite[p.\,133]{Mehner} exhibit
$\bfd$ as a generator, this is one of the very few cases where it is doable ‘by
hand’: recall the incidence graph from \tref{Figure}{fig:P11Cells}. Then the
embedded circle carrying $\bfd$ intersects a single $5$-cell transversally (and no
cells of lower dimension), namely
$\Sigma\coloneqq(\cyc{0\kern-.1px {,}\kern.3px 2{,}\kern-.2px 1\kern-.4px{,}\kern.4px
  3}\!:\!\cyc{0\kern-.1px {,}\kern.3px 2{,}3}\cyc{1}\!:\!\cyc{0{,}1\kern-.4px{,}\kern.4px 2{,}3})$,
and one easily checks that its dual $\Sigma^*$ generates $H^5$ of the small cochain
complex discussed in §\,\ref{subsec:compEx}.

Again, $\bfd$ lifts to a class $\bfd_2\in H_1(\frM_{0,2})$, which is depicted
in \tref{Figure}{fig:Lifted-Generators}, i.e.\ we have $\bfd=s^1_*\bfd_2$.  Note
that the abelianisation of $\Gamma_{1,1}$ is a free abelian group generated by
the Dehn twist along an arbitrary simple closed and non-separating curve; in
particular, $\bfd$ agrees, up to sign, with such a Dehn twist.%
\subsection{Generator \texorpdfstring{$\bfe$}{e}}
We define $\bfe \coloneqq E\bfa^2 \in H_2(\fM_{1,1}^1;\bZ) \cong \bZ_2$, which
is depicted in \tref{Figure}{fig:Generators-abcdef}. Again $\bfe$ lifts to a class
$\bfe_2\in H_2(\frM_{0,2}^1)$, which is depicted in
\tref{Figure}{fig:Lifted-Generators}, i.e.\ we have $\bfe=s^1_*\bfe_2$.

Via Poincaré–Lefschetz duality, $\bfe$ is represented by a simplicial
$7$-dimensional cocycle in $(P_{1,1}^1,P_{1,1}^{\prime\,1})$ by tracking all
transversal intersections. Now a computer-aided calculation
\cite[p.\,133]{Mehner}, which is very similar to the one for $\bfd$, shows that
this cocycle is not nullhomologous, i.e.\ $\bfe$ is non-trivial and hence a
generator.

\begin{figure}
  \centering
  \begin{tikzpicture}[scale=1.85]
  \draw[dgrey,thin] (2.4,0) rectangle (3.4,.6);
  \draw[dgrey,thin] (2.4,.7) rectangle (3.4,1);
  \draw[dgrey,thin] (3.6,0) rectangle (4.6,.6);
  \draw[dgrey,thin] (3.6,.7) rectangle (4.6,1);
  \draw[dgrey,thin] (4.8,0) rectangle (5.8,.45);
  \draw[dgrey,thin] (4.8,1) rectangle (5.8,.55);
  \node at (2.9,-.162) {\footnotesize $\bfc_2\mathstrut$};
  \node at (4.1,-.162) {\footnotesize $\bfd_2\mathstrut$};
  \node at (5.3,-.162) {\footnotesize $\bfe_2\mathstrut$};
  \draw[dred!60,semithick,-{To[scale=.75,width=1.5mm]}] (4.05,.15) -- (4.05,.445);
  \draw[dred!60,semithick,-{To[scale=.75,width=1.5mm]}] (5.15,.1125) -- (5.15,.3325);
  \draw[dgreen!60,semithick,-{To[scale=.75,width=1.5mm]}] (5.15,.6625) -- (5.15,.8825);
  \draw[semithick] (2.73,.85) -- (2.4,.85);
  \draw[semithick] (3.07,.45) -- (2.4,.45);
  \draw[semithick] (2.73,.3) -- (2.4,.3);
  \draw[semithick] (3.07,.15) -- (2.4,.15);
  \draw[semithick] (3.93,.85) -- (3.6,.85);
  \draw[semithick] (4.27,.45) -- (3.6,.45);
  \draw[semithick,dred] (3.93,.3) -- (3.6,.3);
  \draw[semithick] (4.27,.15) -- (3.6,.15);
  \draw[semithick] (5.55,.1125) -- (4.8,.1125);
  \draw[semithick] (5.55,.3375) -- (4.8,.3375);
  \draw[semithick] (5.3,.6625) -- (4.8,.6625);
  \draw[semithick] (5.3,.8875) -- (4.8,.8875);
  \draw[semithick,dred]   (5.05,.225) -- (4.8,.225);
  \draw[semithick,dgreen] (5.05,.775) -- (4.8,.775);
\end{tikzpicture}
  \caption{The generators $\bfc_2$, $\bfd_2$ and $\bfe_2$, depicted as
    maps from tori into the space of parallel slit domains on two
    layers.}\label{fig:Lifted-Generators}
\end{figure}
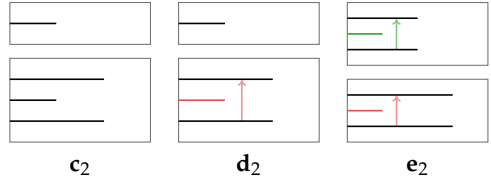

\subsection{Generator \texorpdfstring{$\bff$}{f}}
\label{subsec:f}
Using the vertical Browder bracket from §\,\ref{subsec:vertB}, we let
\mbox{$\bff\coloneqq s^1_*[\bfa,\bfe_2]\in H_3(\frM_{1,1}^2)\cong \Z_2$}. It is
supported on an embedded $3$-torus, as one can see in
\tref{Figure}{fig:Generators-abcdef}.

Again, $\bff$ is, via Poincaré–Lefschetz duality, represented by a simplicial
$9$-cocycle in $(P_{1,1}^2,P_{1,1}^{\prime\,2})$ by tracking all transversal
intersections (with orientation signs), and a computer-aided calculation
\cite[Prop.\,7.3.1]{Boes} shows that this cocycle is not nullhomologous, i.e.\
$\bff$ is non-trivial in $H_3(\frM_{1,1}^2)\cong \Z_2$.

\subsection{Generator \texorpdfstring{$\bfs$}{s}}
The homology class $\bfs\in H_3(\frM_{2,1};\Z)$ is defined in a different way:
fixing a closed Riemann surface $F$ of genus $2$, we have a map
$\imath\colon UTF\to \frM_{2,1}$ from the unit tangent bundle $UTF$ of $F$
to the moduli space $\frM_{2,1}$ by assigning to each pair $(Q,X)$ with
$Q\in F$ and $X\in T_QF$ with norm $1$ the conformal class $[F,(Q,X)]$.
Note that $UTF$ is an orientable closed $3$-manifold, so we can choose a
fundamental class \mbox{$[UTF]\in H_3(UTF;\Z)$} and define
$\bfs\coloneqq\imath_*[UTF]$.\looseness-1

In \tref{Theorem}{thm:s}, we will give the proof from \cite[Prop.\,7.1.2]{Boes}
that $\bfs$ is a \emph{rational} generator (this is already stated in
\cite[p.\,33]{Harer-91}, but without proof); showing that there is a unique
positive natural number $\mu$ such that for $\lambda\coloneqq \frac1\mu$, the
class $\lambda\bfs$ generates the free part of $H_3(\frM_{2,1};\Z)$.

\subsection{Generator \texorpdfstring{$\bfv$}{v}}

Consider the Segal–Tillmann map
$\mathrm{ST}\colon C_6(\mathring D^2)\to \frM_{2,1}$.  We show in
\tref{Theorem}{thm:v} that the induced map on $H_4(-;\Z)$ is injective. On the
other hand, it is a classical calculation \cite[§\,\textsc{iii}]{Cohen-Lada-May}
that $H_4(C_6(\R^2);\Z)$ is isomorphic to $\Z_3$, and a generator is given by
the ptolemaic epicycle $\tilde \bfv$ as in \tref{Figure}{fig:H4C6}. We set
$\bfv\coloneqq \mathrm{ST}_*(\tilde\bfv)\in H_4(\frM_{2,1};\Z)$.

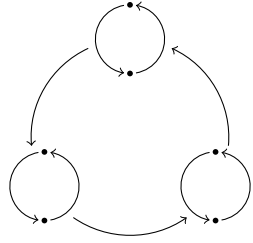
\begin{figure}
  \centering
  \begin{tikzpicture}[scale=1.3]
  \centerarc[-to](0,0)(-5:65:1);
  \centerarc[-to](0,0)(115:185:1);
  \centerarc[-to](0,0)(235:305:1);
  \centerarc[-to](.866,-.5)(-80:80:.35);
  \centerarc[-to](.866,-.5)(100:260:.35);
  \centerarc[-to](0,1)(-80:80:.35);
  \centerarc[-to](0,1)(100:260:.35);
  \centerarc[-to](-.866,-.5)(-80:80:.35);
  \centerarc[-to](-.866,-.5)(100:260:.35);
  \fill (.866,-.15) circle (.029);
  \fill (.866,-.85) circle (.029);
  \fill (-.866,-.15) circle (.029);
  \fill (-.866,-.85) circle (.029);
  \fill (0,1.35) circle (.029);
  \fill (0,.65) circle (.029);
\end{tikzpicture}
  \caption{The ptolemaic epicycle $\tilde\bfv$ generating
    $H_4(C_6(\R^2);\Z)\cong\Z_3$}\label{fig:H4C6}
\end{figure}

\section{Relations}
\label{sec:relations}
In this section, we describe several relations that hold between the generators
and operations from the previous sections. Some of them have already been found:
for example, the relation $Q\bfc=3\cdot \bfc\bfd$ is due to \cite[Ex.\,6]{Godin}
and appears in \cite[§\,1.2]{Mehner}. In particular,
$[\bfc,\bfc]=2\cdot Q\bfc=6\cdot\bfc\bfd\ne 0$, showing that the $E_2$-structure
on $\coprod_g\frM_{g,1}$ cannot be enhanced to an $E_3$-structure
\cite[Thm.\,2.5]{Fiedorowicz-Song}.

In the following, we contribute to this list of relations. Our first
observations consider the genus-stabilisation, i.e.\ multiplication with
$\bfc$. The following proposition claims that a single genus stabilisation step
cancels the Browder bracket. Our proof is essentially the same as in
\cite[Prop.\,5.3.3]{Kranhold}, using a different model. Let us point out that
the very same argument works for each algebra over Tillmann’s surface
operad.\looseness-1

\enlargethispage{\baselineskip}
\begin{prop}\label{prop:ckill}
  For each $x\in H_\bullet(\frM_{g,1}^m)$ and $x'\in H_\bullet(\frM_{g',1}^{m'})$,
  we have $\bfc\cdot [x,x']=0$.
\end{prop}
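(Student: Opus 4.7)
My strategy is operadic, following \cite[Prop.\,5.3.3]{Kranhold} but translated to the slit-picture model of §\ref{sec:models}. The idea is to realise $\bfc\cdot[x,x']$ as the pushforward of $x\otimes x'$ along a loop of binary surface-operad operations, and then to show that this loop is null-homotopic because the extra handle carried by $\bfc$ provides the room needed to slide the Browder swap away.

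Concretely, I first express $\bfc\cdot x\cdot x'$ as $\mu_T(x,x')$, where $T\in\frM_{1,3}$ is a fixed ``genus-$1$ pair of pants'' with two incoming and one outgoing boundary curve, and $\mu_T\colon \frM_{g,1}^m\times \frM_{g',1}^{m'}\to \frM_{g+g'+1,1}^{m+m'}$ is the surface-operad gluing that inserts $x$ and $x'$ into the two incoming boundaries of $T$. That $\mu_T(x,x')$ coincides with the iterated Pontrjagin product $\bfc\cdot x\cdot x'$ will follow from decomposing $T$ as a genus-$0$ pair of pants with an additional handle attached via $\bfc$, together with the codegeneracy relations from §\ref{subsec:FinB} and the lift $\bfc=s^1_*\bfc_2$. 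The $E_2$-Browder bracket $[x,x']$ is by construction the pushforward of $x\otimes x'$ along an $S^1$-family of pair-of-pants operations that swaps the two inputs, so $\bfc\cdot[x,x']$ is the pushforward of $x\otimes x'$ along the corresponding loop $\gamma\colon S^1\to \frM_{1,3}/\fS_2$ obtained by composing this swap loop with the handle attachment.

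The main step, and the main obstacle, is to construct an explicit null-homotopy of $\gamma$. The geometric intuition is simple: inside a genus-$1$ surface, the two incoming boundary curves may be swapped either directly or by first sliding one of them through the handle, and the isotopy between these two swaps traces out a disc bounding $\gamma$. In the slit-picture model this amounts to writing down a two-parameter family of slit configurations on three layers whose boundary realises $\gamma$; the handle slits provide precisely the extra freedom needed to interpolate between the direct and the through-the-handle swaps. In \cite{Kranhold} the analogous null-homotopy is constructed in the vertical operad $\scV_{1,1}$ from §\ref{subsec:vertB} using its explicit operadic presentation, and the remaining task is to transport that construction to the slit-domain setting. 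Once $\gamma$ bounds a disc, the equality $\bfc\cdot[x,x']=0$ follows by naturality of the pushforward.
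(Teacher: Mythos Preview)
Your plan is essentially the paper's argument, but you are missing the one simplification that makes the null-homotopy easy to write down. Rather than working in $\frM_{1,3}/\frS_2$ (whose fundamental group is a mapping class group, so exhibiting a null-homotopy there is not immediate), the paper factors everything through the much smaller space $\scC_2^S(2)$ of two small numbered cubes inside a \emph{fixed} slit picture $S\in\frP_{1,1}$. This is just an ordered configuration-type space in a genus-$1$ surface with boundary, and the implanting procedure of §\,\ref{subsec:c2} gives a map $\lambda^S\colon \scC_2^S(2)\times\frP_{g,1}^m\times\frP_{g',1}^{m'}\to\frP_{1+g+g',1}^{m+m'}$ with $\bfc\cdot[x,x']=\lambda^S_*([\gamma]\otimes x\otimes x')$ for the evident Browder loop $\gamma$ in $\scC_2^S(2)$.

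The null-homotopy is then completely explicit: let each of the two boxes run independently along a loop that threads the handle of $S$. This family is parametrised by a torus $S^1\times S^1$, and the disjointness condition fails only over a small open square $\mathring I^2$ where both boxes sit in the same region. The resulting map $\tilde\gamma\colon (S^1\times S^1)\setminus\mathring I^2\to\scC_2^S(2)$ has boundary homotopic to $\gamma$, and that is the whole proof. Your geometric intuition (``slide through the handle'') is exactly right and is what the punctured-torus picture encodes; the point is only that reducing to a configuration space first turns the vague ``two-parameter family of slit configurations on three layers'' into a one-line construction.
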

\begin{proof}
  We fix a slit picture $S\in \Par_{1,1}$ and let $\scC_2^S(2)$ be the space of
  two small numbered cubes inside this slit picture, see
  \tref{Figure}{fig:Loop-with-Genus}. By the same implanting procedure as
  before, we obtain a map
  $\smash{\lambda^S\colon \scC_2^S(2)\times \Par_{\smash{g\kern-1px,1}}^m\times
    \Par_{\smash{g',1}}^{m'}\longrightarrow \Par_{\smash{1+g+g'\kern-1px,1}}^{m+m'}}$.
  
  Now let \mbox{$\gamma\colon S^1\to \scC^S_2(2)$} be the loop that is depicted
  on the left side of \tref{Figure}{fig:Loop-with-Genus}. Then we clearly have
  $\bfc\cdot [x,x'] = \lambda^S_*([\gamma]\otimes x\otimes x')$, so it suffices to
  show that $\gamma$ is a boundary.  To do so, let $I\subseteq S^1$ be a small
  closed interval on the standard circle. Then we define a map
  $\tilde\gamma\colon (S^1\times S^1)\setminus \mathring I^2\to \scC_2^S(2)$ whose
  source is a $2$-torus with a small square removed and which is depicted on the right
  side of \tref{Figure}{fig:Loop-with-Genus}: the positions of the boxes on
  the red and on green circles parametrised by a torus, and the excluded square
  $\mathring{I}^2$ parametrises the situation in which both boxes lie
  inside the blue region—this is the only case where the disjointness
  condition can be violated. Now one readily checks that, after identifying
  $\partial I^2\cong S^1$, the boundary of $\tilde\gamma$ is homotopic to
  $\gamma$.
\end{proof}

\begin{figure}[ht]
  \centering
  \begin{tikzpicture}[scale=2]
    \draw (0,0) rectangle (1.9,1);
    \draw (0,.2) -- (.8,.2);
    \draw (0,.4) -- (.5,.4);
    \draw (0,.6) -- (.8,.6);
    \draw (0,.8) -- (.5,.8);
    \draw (.9,.7) rectangle (1.1,.9);
    \draw (1.2,.6) rectangle (1.4,.4);
    \centerarc[semithick,black!30!blue,-{To[scale=.8,width=1.6mm]}](1.3,.5)(110:-200:.42);
    \node at (1.3,.5) {\small $2$};
    \node at (1,.8) {\small $1$};
  \end{tikzpicture}
\qquad\quad
\begin{tikzpicture}[scale=2]
  \centerarc[semithick,black!35!green,{To[scale=.8,width=1.6mm]}-](1,.4)(-20:290:.35);
  \fill[white] (0,.2) rectangle (.8,.6);
  \centerarc[semithick,black!30!red,{To[scale=.8,width=1.6mm]}-](.7,.6)(-20:290:.35);
  \fill[white] (0,.4) rectangle (.5,.8);
  \draw (0,.2) -- (.8,.2);
  \draw (0,.4) -- (.5,.4);
  \draw (0,.6) -- (.8,.6);
  \draw (0,.8) -- (.5,.8);
  \draw[black!30!blue!20,fill=black!30!blue!10] (1.01641,.74962) circle (.12);
  \centerarc[black!30!red,semithick](1.01641,.74962)(115.178:135.178:.12);
  \centerarc[black!30!red,semithick](1.01641,.74962)(275.436:296.436:.12);
  \centerarc[black!35!green,semithick](1.01641,.74962)(-22.558:-2.558:.12);
  \centerarc[black!35!green,semithick](1.01641,.74962)(177.184:197.184:.12);
  \centerarc[thin,black!35!green](1,.4)(-10:120:.35);
  \centerarc[thin,black!30!red](.7,.6)(-10:90:.35);
  \draw[fill=white] (.847,.253) rectangle (1.047,.453);
  \draw[fill=white,shift={(.2,-.2)}] (.947,.253) rectangle (1.147,.453);
  \node at (.947,.353) {\small $1$};
  \node at (1.247,.153) {\small $2$};
  \draw (0,0) rectangle (1.9,1);
\end{tikzpicture}
  \caption{%
    \emph{Left:} The loop $\gamma\colon S^1\to\scC_2^S(2)$.\quad
    \emph{Right:} The map
    $\tilde\gamma\colon (S^1\times S^1)\setminus\mathring I^2\to\scC_2^S(2)$.}
  \label{fig:Loop-with-Genus}
\end{figure}

The stabilisation step can also cancel other unstable classes, which do not
decompose into a Browder bracket, as \tref{Proposition}{prop:ce0} shows.

\begin{prop}\label{prop:ce0}
  $\bfc\bfe=0$.
\end{prop}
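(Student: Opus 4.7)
The plan is to pin down $\bfc\bfe$ within the small group $H_2(\frM_{2,1}^1;\Z) = \Z \oplus \Z_2 \gen{\bfa \bfd^2}$ using a forgetful-map argument.

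First, I observe that $\bfe \in H_2(\frM_{1,1}^1;\Z) \cong \Z_2$ is $2$-torsion, so $\bfc\bfe$ is as well by bilinearity of the Pontrjagin product, and hence lies in the $2$-torsion subgroup $\Z_2 \gen{\bfa\bfd^2}$ of $H_2(\frM_{2,1}^1;\Z)$ by Table \ref{tab:g2}. Thus $\bfc\bfe \in \{0,\,\bfa\bfd^2\}$, and it suffices to rule out the second option.

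Next, I would consider the forgetful map $f \colon \frM_{2,1}^1 \to \frM_{2,1}$ that drops the marking of the unique puncture. Since forgetting a puncture and performing a boundary connect sum with a punctureless surface act on geometrically disjoint parts of the surface, they commute, giving the naturality identity $f_*(\bfc \cdot z) = \bfc \cdot f'_*(z)$ for any $z \in H_\bullet(\frM_{1,1}^1)$, where $f' \colon \frM_{1,1}^1 \to \frM_{1,1}$ is the analogous forgetful map. Applying this with $z = \bfe$ yields $f_*(\bfc\bfe) = \bfc \cdot f'_*(\bfe) = 0$, because $f'_*(\bfe)$ lands in $H_2(\frM_{1,1};\Z) = 0$ by Table \ref{tab:g1}.

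On the other hand, in $\bfa\bfd^2 = \bfa \cdot \bfd^2$ the unique puncture comes from the factor $\bfa \in H_0(\frM_{0,1}^1)$; forgetting it absorbs the punctured disc underlying $\bfa$ into the boundary connect sum, giving $f_*(\bfa\bfd^2) = \bfd^2$, which is the non-zero generator of $H_2(\frM_{2,1};\Z) = \Z_2 \gen{\bfd^2}$. Comparing, $f_*(\bfc\bfe) = 0 \ne \bfd^2 = f_*(\bfa\bfd^2)$, hence $\bfc\bfe \ne \bfa\bfd^2$, and the first paragraph then forces $\bfc\bfe = 0$.

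The main point requiring care is verifying the two naturality properties of $f$: its commutation with multiplication by $\bfc$, and the retraction identity $f_*(\bfa \cdot y) = y$ for punctureless $y$. Both are routine bundle-theoretic arguments, which one can make precise using the slit-domain description of the Pontrjagin product together with the explicit universal-surface bundle structure of $\frM_{g,1}^{m+1} \to \frM_{g,1}^m$.
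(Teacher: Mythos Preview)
Your argument is correct and takes a genuinely different route from the paper. Both proofs begin identically: since $\bfe$ has order $2$, so does $\bfc\bfe$, and the split injectivity of $\bfa\cdot-$ forces the $2$-torsion of $H_2(\frM_{2,1}^1;\Z)\cong\Z\oplus\Z_2$ to be generated by $\bfa x$ with $x$ the generator of $H_2(\frM_{2,1};\Z)\cong\Z_2$. To separate $\bfc\bfe$ from $\bfa x$, the paper passes to $\bF_2$-coefficients and invokes Bianchi's splitting (\ref{eq:Andrea}), observing that the two classes land in different tensor factors $\bF_2\gen{1}\otimes H_2(\Gamma_{2,1};\mathrm{Sym}^1\caH)$ versus $\bF_2\gen{\bfa}\otimes H_2(\Gamma_{2,1};\mathrm{Sym}^0\caH)$. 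You instead stay integral and use the puncture-forgetting map $f\colon\frM_{2,1}^1\to\frM_{2,1}$, exploiting that it is a retraction of $\bfa\cdot-$ (so $f_*(\bfa x)=x\ne 0$) and that it commutes with multiplication by a punctureless class (so $f_*(\bfc\bfe)=\bfc\cdot f'_*(\bfe)\in\bfc\cdot H_2(\frM_{1,1})=0$). Your approach is more elementary in that it avoids Bianchi's structural theorem entirely; the paper's approach, on the other hand, plugs into machinery that it reuses repeatedly elsewhere (e.g.\ in the proofs of Theorems~\ref{thm:tableG1} and~\ref{thm:tableG2}), so the marginal cost there is low. The two naturality claims you flag at the end are indeed routine: the forgetful map is an $H$-map for the pair-of-pants product (forgetting a puncture and gluing along boundaries are disjoint operations), and $f\circ(\bfa\cdot-)\simeq\id$ because adding a puncture near the boundary is a section of the universal surface bundle.
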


This relation has already been claimed in \cite[p.\,14]{Mehner}, but without a
proof.

\begin{proof}
  We have $\bfc\bfe\in H_2(\frM_{2,1}^1;\Z)\cong\Z\oplus\Z_2$ and
  $2\cdot \bfc\bfe=0$, so it suffices to show that $\bfc\bfe$ is not the
  generator of the single $\Z_2$-summand.
  Here we use that $H_2(\frM_{2,1};\Z)\cong\Z_2$, and we call its generator $x$
  (we will show later in \tref{Theorem}{thm:tableG2} that it is $\bfd^2$, but this is
  not necessary for the argument). As the Pontrjagin product with $\bfa$ is
  split injective, the class $\bfa x$ generates the aforementioned
  $\Z_2$-summand of $H_2(\frM_{2,1}^1;\Z)$. We show that $\bfa x$ differs from
  $\bfc\bfe$ by inspecting their mod-2 reductions, using the isomorphism
  (\ref{eq:Andrea}) from \cite{Bianchi}: here $\bfa x$ is a non-trivial element in
  the direct summand
  $\bbF_2\gen{\bfa}\otimes H_2(\Gamma_{2,1};\mathrm{Sym}^0\caH)$, while
  $\bfc\bfe$ lies in the direct summand
  $\bbF_2\gen{1}\otimes H_2(\Gamma_{2,1};\mathrm{Sym}^1\caH)$.\looseness-1
\end{proof}

Using the vertical Browder brackets from §\,\ref{subsec:vertB} and exploiting
the fact that $\bfc$, $\bfd$, and $\bfe$ can be lifted to classes of
$\smash{\frM_{g,2}^m}$, we can show that certain classical Browder brackets are
divisible by $2$ or even vanish. This is essentially the proof from
\cite[Prop.\,5.3.7]{Kranhold}.

\begin{prop}\label{prop:bdiv}
  For $x\in H_\bullet(\frM_{g,1}^m)$, $[\bfc,x]$ and $[\bfd,x]$
  are divisible by $2$, and $[\bfe,x]=0$.
\end{prop}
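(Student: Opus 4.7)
The plan is to exploit the lifts $\bfc = s^1_*\bfc_2$, $\bfd = s^1_*\bfd_2$, and $\bfe = s^1_*\bfe_2$ recorded in \tref{Figure}{fig:Lifted-Generators}, together with the key identity (\ref{eq:V1}) for the vertical Browder bracket. Since the second argument $x$ has a single boundary, so $n_2 = 1$, that identity specialises to
\[[z,x] \;=\; s^1_*[z_2,x] + s^1_*[\tau_* z_2,x]\]
for any lift $z = s^1_* z_2$, where $\tau = \cyc{1,2} \in \frS_2$ swaps the two boundary curves of $z_2$. If $\tau$ acts trivially on $z_2$, the right-hand side collapses to $2\cdot s^1_*[z_2,x]$, and divisibility by $2$ is immediate.

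The first step is to verify the $\tau$-invariance of each lift. For $\bfc_2 \in H_0(\frM_{0,2})$ this is automatic, as $\frM_{0,2}$ is path-connected and $\tau_*$ necessarily acts trivially on $H_0$. For $\bfd_2 \in H_1(\frM_{0,2})$, one observes that $\Gamma_{0,2} \cong \Z$ is generated by the Dehn twist about the core curve of an annulus, whose isotopy class does not depend on an ordering of the two boundary curves, so $\tau_*\bfd_2 = \bfd_2$. For $\bfe_2 \in H_2(\frM_{0,2}^1)$, the slit picture in \tref{Figure}{fig:Lifted-Generators} is manifestly symmetric under interchange of its two layers. This already yields $[\bfc,x] = 2\, s^1_*[\bfc_2,x]$ and $[\bfd,x] = 2\, s^1_*[\bfd_2,x]$, both divisible by $2$, and similarly $[\bfe,x] = 2\, s^1_*[\bfe_2,x]$.

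To finish $[\bfe,x] = 0$, I would further appeal to the fact that the lift $\bfe_2$ already satisfies $2\bfe_2 = 0$. This follows by the very reasoning recorded at the end of §\,\ref{subsubsec:E} for the one-boundary $E$-operation applied to a ground class: $\bfe_2$ is the fibre-pushforward of a fundamental class in a non-orientable symmetric torus bundle, and the loop swapping its two sinks reverses orientation, forcing the output to be $2$-torsion. Bilinearity of the bracket then gives $[\bfe,x] = s^1_*[2\bfe_2,x] = 0$. The main obstacle I anticipate is nothing more than the careful bookkeeping involved in verifying the $\tau$-invariance of $\bfd_2$ and $\bfe_2$ and in transporting the $2$-torsion argument for $E$ on ground classes from the single- to the two-boundary setting; once these are in place the proposition follows by a direct application of (\ref{eq:V1}).
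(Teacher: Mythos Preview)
Your treatment of $\bfc$ and $\bfd$ is correct and matches the paper's proof exactly: lift via $s^1_*$, note $\tau$-invariance, apply (\ref{eq:V1}).

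The argument for $\bfe$, however, contains two genuine errors. First, the ``manifest symmetry'' of the slit picture for $\bfe_2$ under layer interchange does \emph{not} give $\tau_*\bfe_2 = \bfe_2$. The class $\bfe_2$ is carried by a map $S^1\times S^1\to \frM_{0,2}^1$, with one $S^1$-factor parametrising the moving slit on each layer; swapping the layers swaps these two factors and hence reverses the orientation of the torus. Thus $\tau_*\bfe_2 = -\bfe_2$, which is precisely what the paper uses. Second, your claim $2\bfe_2 = 0$ is false: $\Gamma_{0,2}^1\cong \Z^2$ (generated by the two boundary-parallel Dehn twists in a once-punctured annulus), so $H_2(\frM_{0,2}^1;\Z)\cong \Z$, and since $s^1_*\bfe_2=\bfe\neq 0$ the class $\bfe_2$ has infinite order. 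The $2$-torsion argument from \S\ref{subsubsec:E} does not transport: that argument swaps two \emph{sinks} along a path inside the moduli space, but $\bfe_2$ has only one sink, and the layer swap is not realised by a path in $\frM_{0,2}^1$ because boundary curves are ordered.

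The fix is simpler than your detour: once you have $\tau_*\bfe_2=-\bfe_2$, formula (\ref{eq:V1}) gives $[\bfe,x]=s^1_*[\bfe_2,x]+s^1_*[-\bfe_2,x]=0$ directly, with no $2$-torsion input needed.
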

\begin{proof}
  Let $\tau\coloneqq \cyc{1,\kern-.5px 2}\in \frS_2$
  be the transposition that exchanges $1$ and $2$.
  Now we note that $s^1_*\bfc_2=\bfc$ and
  $\tau_*\bfc_2=\bfc_2$, so by employing formula (\ref{eq:V1}), we get
  \[[\bfc,x] = [s^1_*\bfc_2,x]
    = s^1_*[\bfc_2,x] + s^1_*[\tau^{\vphantom 1}_*\bfc_2,x]
    = 2\cdot s^1_*[\bfc_2,x].\]
  The very same argument works for
  $\bfd$, and for $\bfe$, we note that
  $\tau_*\bfe_2=-\bfe_2$, whence we get
  $[\bfe,x] = [s^1_*\bfe_2,x]
  = s^1_*[\bfe_2,x] + s^1_*[\tau^{\vphantom 1}_*\bfe_2,x] = 0$.
\end{proof}

Note that the proof says a little bit more:
the same argument works for each class \mbox{$x\in H_\bullet(\frM_{g,n}^m)$},
where the number $n$ of boundary curves is arbitrary and
where we use the vertical Browder bracket from §\,\ref{subsec:vertB}.
This shows \mbox{$[\bfc,\bfc] = 2\cdot s^1_*[\bfc_2,\bfc] = 4\cdot s^1_*s^1_*[\bfc_2,\bfc_2]$},
i.e.\ $[\bfc,\bfc]$ is divisible by $4$, which we already
know since $[\bfc,\bfc]=2\cdot Q\bfc = 6\cdot\bfc\bfd = -4\cdot \bfc\bfd$.

We also conclude that
$[\bfc,\bfd]\in H_2(\frM_{2,1})\cong\Z_2$ and
$[\bfa,\bfd]\in H_2(\frM_{1,1}^1)\cong \Z_2$ vanish.\footnote{In
  \cite[p.\,14]{Mehner}, one finds $[\bfc,\bfd]=\bfd^2$; this, however, is
  disproven by the above argument.} Even though such an argument cannot be
applied to the brackets $[\bfa,\bfc]\in H_2(\frM_{1,1}^1)\cong \Z$ or to
$[\bfd,\bfd]\in H_3(\frM_{2,1})\cong \Z\oplus \Z_2$, these Browder brackets
vanish as well:

\begin{prop}\label{prop:ac0}
  $[\bfa,\bfc]=0$ and $[\bfd,\bfd]=0$.
\end{prop}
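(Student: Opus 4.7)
The plan is to handle the two statements by different methods: a torsion argument for $[\bfd,\bfd]$, and a functoriality argument for $[\bfa,\bfc]$ using a puncture-forgetting map that converts the bracket into one involving the $E_2$-unit.

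For $[\bfd,\bfd]=0$, I would combine graded commutativity with \tref{Proposition}{prop:bdiv}. Since $|\bfd|=1$ is odd, graded commutativity gives $[\bfd,\bfd]=-[\bfd,\bfd]$, so the class is $2$-torsion. \tref{Proposition}{prop:bdiv} forces it to additionally lie in $2\cdot H_3(\frM_{2,1})$. Reading off $H_3(\frM_{2,1})\cong\Z\oplus\Z_2$ from \tref{Theorem}{thm:tableG2}, the two subgroups $2\Z\oplus 0$ and $0\oplus\Z_2$ intersect trivially, which closes this case.

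For $[\bfa,\bfc]=0$, the idea is to exploit the ``forget all punctures'' map $\Phi\colon\bigsqcup_{g,m}\frM_{g,1}^m\to\bigsqcup_g\frM_{g,1}$, which simply drops each puncture decoration. I would first verify that $\Phi$ is a morphism of $E_2$-algebras: the pair-of-pants gluings underlying the $\scC_2$-action act only on the boundary regions, so they commute with forgetting interior punctures, and the verification is essentially an identification of topological surfaces. Since $\Phi_*(\bfa)=1\in H_0(\frM_{0,1})$ is the $E_2$-unit and $\Phi_*(\bfc)=\bfc$, the annihilation formula $[1,\bfc]=0$ then immediately yields $\Phi_*[\bfa,\bfc]=0$.

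The main obstacle is to show that the restriction $\Phi_*\colon H_1(\frM_{1,1}^1)\to H_1(\frM_{1,1})$ is injective, so that this vanishing after $\Phi_*$ implies $[\bfa,\bfc]=0$ itself. For this, I would examine the Serre spectral sequence associated with the Birman fibration $\Sigma_{1,1}\to\frM_{1,1}^1\to\frM_{1,1}$, whose fiber is the once-holed torus with $H_1\cong\Z^2$. The only potential obstruction to the edge map being an isomorphism on $H_1$ is the term $E_2^{0,1}=H_0(\Gamma_{1,1};H_1(\Sigma_{1,1}))$, i.e.\ the coinvariants of $\Z^2$ under the standard $SL_2(\Z)$-action of the torus mapping class group; a short direct calculation with the generators $S$ and $T$ of $SL_2(\Z)$ shows that these coinvariants vanish, completing the plan.
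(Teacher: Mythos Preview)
Your proof is correct and follows essentially the same strategy as the paper: for $[\bfd,\bfd]$ you combine graded anticommutativity with \tref{Proposition}{prop:bdiv} and the known structure of $H_3(\frM_{2,1})$, and for $[\bfa,\bfc]$ you push forward along the puncture-forgetting map to obtain $[1,\bfc]=0$ and then argue that this map is an isomorphism on $H_1$. The only minor difference is in this last step: the paper observes that the Birman sequence makes $\delta\colon\Gamma_{1,1}^1\to\Gamma_{1,1}$ surjective, hence $\delta_*$ is a surjection $\Z\to\Z$ and therefore an isomorphism, whereas you instead compute the $E^2_{0,1}$-term of the associated Serre spectral sequence directly --- both routes encode the same Birman input.
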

\begin{proof}
  The homomorphism $\delta\colon \Gamma_{1,1}^1\to \Gamma_{1,1}$ that forgets
  the single puncture is surjective, as it is part of the Birman exact sequence
  $\pi_1(F_{1,1})\to \Gamma_{1,1}^1\to \Gamma_{1,1}$. Since abelianisation is
  right exact, the induced map on first homology
  $\delta_*\colon H_1(\frM_{1,1}^1)\to H_1(\frM_{1,1})$ is epic as well. Since
  both homology groups are (abstractly) isomorphic to $\Z$, it follows that
  $\delta_*$ is an isomorphism. However, we clearly have
  $\delta_*[\bfa,\bfc] = [1,\bfc]=0$.

  We know that $[\bfd,\bfd]=-[\bfd,\bfd]$ by the graded commutativity of the
  Browder bracket, i.e.\ $[\bfd,\bfd]$ has to lie in the
  $\Z_2$-summand. However, we also know that $[\bfd,\bfd]$ is divisible by $2$,
  so it has to vanish.
\end{proof}

The following proposition shows that the $T$-operation often behaves like a
differential. More precisely, an expression
$(T\circ T)(x)\in H_{i+2}(\frM_{g+2,1}^{m-2})$ can only be non-trivial %
if it lies in a direct summand of
the form $\Z_{\smash{2^k}}$ with $k\ge 2$.

\begin{prop}\label{prop:TT}
  $T\circ T$ is divisible by $2$ and of order $2$, i.e.\ over
  $\bbF_2$ or $\Q$, we have $T\circ T=0$.
\end{prop}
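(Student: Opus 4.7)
The plan is to realise $T\circ T$ as a double pants-gluing $s^1_*\circ s^1_*$ applied to a twofold transfer-and-cap construction, and then to exploit an involution on the source which acts with a sign on the construction but trivially after the double gluing.

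First, I would commute $\hat T$ past $s^1_*$: since $\hat T$ transfers at a puncture while $s^1_*$ glues a pair of pants to two boundary curves, these operations act on disjoint parts of the surface and commute up to a reindexing of boundary labels. Consequently,
\[
T\circ T \;=\; s^1_*\circ s^1_*\circ \Xi,
\]
where $\Xi\coloneqq \hat T\circ \hat T\colon H_i(\frM_{g,1}^m)\to H_{i+2}(\frM_{g,3}^{m-2})$ is obtained by transferring $x$ through the $m(m-1)$-sheeted \emph{ordered} double-puncture cover $\frM_{g,1}^{m-2,1,1}\to\frM_{g,1}^m$, capping with the fundamental class of the torus fibre $S^1\times S^1$ of the combined unit tangent bundle at the two marked punctures, and finally identifying each marked puncture together with its tangent direction with a parametrised boundary curve, so that the two chosen punctures become boundary curves~$2$ and~$3$.

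Next I would establish that $\Xi(x)$ is anti-invariant under the involution $\tau=\cyc{2,3}\in\frS_3$ swapping boundary curves $2$ and $3$. The construction of $\Xi$ is manifestly $\tau$-equivariant: $\tau$ acts as the non-trivial deck transformation on the ordered cover, and as the swap of the two $S^1$-factors on the torus fibre. Since this factor-swap reverses the orientation of $S^1\times S^1$, the cap with the oriented fibre class picks up a sign, yielding $\tau_*\Xi(x)=-\Xi(x)$. I would then invoke the functoriality from §\ref{subsec:FinB}: the surjection $\{1,2,3\}\to\{1\}$ representing $s^1\circ s^1$ is unchanged under precomposition with $\tau$, so $s^1_*\circ s^1_*\circ\tau_* = s^1_*\circ s^1_*$; this is consistent with the identity $s^l\cyc{l,l+1}\simeq s^l$ from \cite[Rmk.\,5.2.16]{Kranhold}. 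Combining the two,
\[
T\circ T(x) \;=\; s^1_* s^1_*\Xi(x) \;=\; s^1_* s^1_*\tau_*\Xi(x) \;=\; -s^1_* s^1_*\Xi(x) \;=\; -T\circ T(x),
\]
which gives $2\cdot T\circ T(x)=0$.

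For the divisibility by $2$, I would refine the argument on the chain level in the multisimplicial model of §\ref{subsec:Par}, factoring the ordered cover as $\frM_{g,1}^{m-2,1,1}\to\frM_{g,1}^{m-2,2}\to\frM_{g,1}^m$ with the first map a $2$-fold deck covering $\gamma$. A chain representative $c$ of the unordered transfer yields two preimage multisimplices per cell; choosing one preimage per cell produces a local chain $\zeta$ with $\gamma^!(c)=\zeta+\tau\zeta$ on the nose. Although $\zeta$ need not be globally closed on the ordered cover, the composite $s^1_* s^1_*\theta_*(U\cap\zeta)$ (with $U$ the oriented torus Thom class) descends to a well-defined cycle class in $H_{i+2}(\frM_{g+2,1}^{m-2})$, since the $\tau$-ambiguity in the choice of preimage is annihilated by $s^1_* s^1_*$; by the anti-invariance already established, this class doubles to $T\circ T(x)$.

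The main obstacle will be to verify, in the multisimplicial model, that the locally chosen lift $\zeta$ really does descend to a well-defined cycle after applying $s^1_* s^1_*\circ\theta_*\circ(U\cap{-})$, and that the orientation signs from the orientation system of \cite{Mueller} conspire correctly, so that the two claims (divisibility by $2$ and order $2$) both fall out of a single Tate-cohomology computation for $\Z_2$ acting on $H_\bullet(\frM_{g,3}^{m-2})$.
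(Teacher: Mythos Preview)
Your argument that $2\cdot(T\circ T)=0$ is correct and essentially the same as the paper's: you establish $\tau_*\Xi(x)=-\Xi(x)$ from the orientation reversal of the torus fibre and then invoke $s^1_*s^1_*\tau_*=s^1_*s^1_*$. The paper writes this as $p^!(\gamma^!+\tau_*\gamma^!)=(p^!-\tau_*p^!)\gamma^!$ followed by $s^2_*\vartheta^{1,1}_*(1-\tau_*)=0$, but the content is identical. Your commutation of $\hat T$ past $s^1_*$ is also fine; the paper simply writes the composite directly as $T\circ T=s^1_*s^2_*\vartheta^{1,1}_*p^!\gamma^!\beta^!$ (note $s^1s^1\simeq s^1s^2$ by the codegeneracy relation).

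The divisibility-by-$2$ argument, however, has a genuine gap that you yourself flag. A local choice of preimage $\zeta$ with $\gamma^!(c)=\zeta+\tau\zeta$ is not a cycle: one has $\partial\zeta=-\tau\partial\zeta$, and after capping with the $\tau$-anti-invariant torus class this becomes $\tau$-\emph{invariant}, so the $\tau$-coinvariance of $s^1_*s^1_*$ gives no reason for the image to be closed. The ``$\tau$-ambiguity is annihilated'' remark addresses well-definedness of the \emph{choice}, not closedness of the result; these are separate issues, and the second one is not handled. The paper's route avoids chain-level splittings entirely: since $s^2\circ\vartheta^{1,1}$ is homotopy $\frS_2$-invariant, it factors through the free quotient $W^2=W^{1,1}/\frS_2$ via some $\vartheta^2\colon W^2\to\frM_{g+1,2}^{m-2}$, and then the elementary identity $\tilde\gamma_*\tilde\gamma^!=2$ for the double cover $\tilde\gamma\colon W^{1,1}\to W^2$ gives
\[
T\circ T=s^1_*\vartheta^2_*\tilde\gamma^{\vphantom !}_*\tilde\gamma^!q^!\beta^!=2\cdot s^1_*\vartheta^2_*q^!\beta^!.
\]
This is the missing idea: descend to the quotient space and use pushforward--transfer, rather than attempt to halve a transfer on chains.
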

\begin{proof}
  Recall the $\binom{m}{2}$-sheeted covering
  $\smash{\beta\colon \frM_{\smash{g,1}}^{m-2,2}\to \frM_{g,1}^m}$ where \emph{two} of
  the punctures are separated from the other ones and indistinguishable, and let
  $\smash{\gamma\colon \frM_{\smash{g,1}}^{m-2,1,1}\to \frM_{\smash{g,1}}^{m-2,2}}$ be the
  $2$-sheeted covering in which these two punctures can be distinguished.

  Let \mbox{$p\colon W^{1,1}\to \frM_{\smash{g,1}}^{m-2,1,1}$} be the fibre
  product of the two vertical unit tangent bundles: it is an orientable
  $(S^1\times S^1)$-bundle and we have an equivalence
  \mbox{$\vartheta^{1,1}\colon W^{1,1}\to \frM_{g,3}^{m-2}$} by regarding the
  two isolated punctures, together with their tangential directions,
  as boundary curves as in §\,\ref{subsubsec:T}. Gluing pairs of pants, we obtain
  $T\circ T=s^1_*s^2_*\vartheta^{1,1}_*p^!\gamma^!\beta^!$.

  Note that both spaces $W^{1,1}$ and $\frM_{\smash {g,1}}^{m-2,1,1}$ carry a
  free $\frS_2$-action by interchanging the two isolated punctures (and their
  tangential directions), and the bundle map $p$ is
  $\frS_2$-equivariant. Quotienting out this symmetry, we obtain the bundle
  \mbox{$\smash{q\colon W^2\to \frM_{\smash{g,1}}^{m-2,2}}$} from
  §\,\ref{subsubsec:E}. Moreover, the map $\smash{\vartheta^{1,1}}$ is equivariant
  with respect to the $\frS_2$-action on $\smash{\frM_{\smash{g,3}}^{m-2}}$
  interchanging the boundary curves $2$ and $3$. Now recall that
  \mbox{$s^2\colon \frM_{\smash{g,3}}^{m-2}\to \frM_{\smash{g+1,2}}^{m-2}$} is
  homotopic to $s^2\circ \cyc{2,\kern-.5px 3}$, i.e.\ $s^2\circ \vartheta^{1,1}$
  is homotopy $\frS_2$-invariant. Because the projection
  $\tilde\gamma\colon W^{1,1}\to W^2$ is a covering and hence a homotopy
  quotient, we obtain a map
  $\vartheta^2\colon W^2\to \frM_{\smash{g+1,2}}^{m-2}$ such that
  $s^2\vartheta^{1,1}$ is homotopic to $\vartheta^2\circ \tilde\gamma$, i.e.\ we
  have a diagram (where the right square commutes up to homotopy):\looseness-1
  \[
    \begin{tikzcd}[row sep=1.5em]
       \frM_{g,1}^{m-2,1,1}\ar[d,"\gamma"]  & W^{1,1}\ar[r,"\vartheta^{1,1}"]\ar[l,swap,"p"]\ar[d,"\tilde\gamma"] & \frM_{g,3}^{m-2}\ar[d,"s^2"]\\
      \frM_{g,1}^{m-2,2}\ar[d,"\beta"]      & W^2\ar[r,"\vartheta^2"]\ar[l,swap,"q"]                              & \frM_{g+1,2}^{m-2}\ar[d,"s^1"]\\
      \frM_{g,1}^m                          &                                                                     & \frM_{g+2,1}^{m-2}.   
    \end{tikzcd}
  \]
  Now we use that $\tilde\gamma$ is a $2$-sheeted covering, i.e.\
  $\tilde\gamma_*\tilde\gamma^!$ is the same as multiplication with $2$, whence
  $T\circ T = s^1_*s^2_*\vartheta^{1,1}_*p^!\gamma^!\beta^!
  = s^1_*\vartheta^2_*\tilde\gamma_*p^!\gamma^!\beta^!
  = s^1_*\vartheta^2_*\tilde\gamma_*\tilde\gamma^!q^!\beta^!
  = 2\cdot s^1_*\vartheta^2_*q^!\beta^!$.

  To see that $2\cdot (T\circ T)=0$, we write again
  $\tau\coloneqq \cyc{1,\kern-.5px 2}$.  Then we have $\gamma^!=\tau_*\gamma^!$,
  already on the level of singular chains, and secondly, we have
  $\tau_* p^!=-p^!\tau_*$, since $\tau$ reverses the orientations of the
  fibres. Using that $s^2_*\vartheta^{1,1}_*\tau = s^2_*\vartheta^{1,1}_*$, this
  shows that
  \begin{align*}
    2\cdot (T\circ T)(x) &=(s^1_*s^2_*\vartheta^{1,1}_*)p^!(2\gamma^!)\beta^!x
    \\ &=(s^1_*s^2_*\vartheta^{1,1}_*)p^!(\gamma^!+\tau_*\gamma^!)\beta^!x
    \\ &=(s^1_*s^2_*\vartheta^{1,1}_*)(p^!-\tau_*p^!)\gamma^!\beta^!x
    \\ &=s^1_*(s^2_*-s^2_*\tau_*)\vartheta^{1,1}_*p^!\gamma^!\beta^!x
    \\ &=0\qedhere
  \end{align*}
\end{proof}

\begin{rem}\label{rem:Rotation}
  Let us close this section with some remarks on how the rotation $R$ from
  §\,\ref{subsec:Rot} acts on some of our homology classes:
  \begin{enumerate}
  \item We have $R\bfa=0$ since $H_1(\frM_{0,1}^1)=0$. This shows that
    $R(\bfa x)= [\bfa, x] + \bfa\cdot Rx$; for example, we get
    $R(\bfa^2)=[\bfa,\bfa]=2\cdot\bfb$.
  \item We have $R\bfc=12\cdot\bfd$, see the proof below.
  \item Combining 1 and 2 with
    \tref{Proposition}{prop:ac0}, we get \mbox{$R(\bfa\bfc) = \bfa\cdot R\bfc = 12\cdot\bfa\bfd$}.
  \end{enumerate}
\end{rem}
\begin{proof}
  We are left to show that $R\bfc = 12\cdot \bfd$: we know that the mapping
  class group $\Gamma_{1,1}$ is generated by the two Dehn twists $T_a$ and $T_b$
  about the two non-separating simple closed curves whose homotopy classes are the standard
  generators of $\pi_1$ of the torus. They satisfy the braid relation
  $T_aT_bT_a=T_bT_aT_b$, so when passing to
  $H_1(\frM_{1,1})=\Gamma_{1,1}^{\text{ab}}$, both represent the generator
  $\bfd$. The Dehn twist $T_\partial$ about the single boundary curve represents
  the homology class $R\bfc\in H_1(\frM_{1,1})$. Moreover, we have
  $(T_aT_b)^6=T_\partial$ by the chain relation
  \cite[Prop.\,4.12]{Farb-Margalit}, so after abelianising, we get
  $12\cdot\bfd=R\bfc$ as claimed.\looseness-1
\end{proof}

\section{Proofs for the tables}
\label{sec:computations}
In this section, we check that our tables from the introduction are
correct. Since the two classes $\bfs$ and $\bfv$ need a more subtle treatment,
we will deal with them first.

\subsection{Proof: Generator \texorpdfstring{$\bfs$}{s}}

Recall that $\bfs$ is the image of a fundamental class under
$\imath\colon UT{F}\to \frM_{2,1}$ where $UT{F}$ is the unit tangent bundle of a
closed genus-$2$ surface ${F}$.

The map $\imath$ is a homotopy fibre: let $\mathrm{Diff}({F})$ be the group of
orientation-preserving diffeomorphisms on ${F}$: it acts transitively on the
unit tangent bundle $UT{F}$. By a continuous version of Shapiro’s lemma, the
Borel construction \mbox{$E\mathrm{Diff}({F})\times_{\mathrm{Diff}({F})} UT{F}$}
is a classifying space for the subgroup $\mathrm{Diff}_{(Q,X)}({F})$ of
diffeomorphisms fixing a point $(Q,X)\in UT{F}$. Since ${F}$ is hyperbolic, both
groups $\mathrm{Diff}_{(Q,X)}({F})$ and $\mathrm{Diff}({F})$ have contractible
components and hence are equivalent to their group of path components, which are
the mapping class groups $\Gamma_{2,1}$ and $\Gamma_2$, respectively. Using that
$\frM_{2,1}$ is a classifying space for $\Gamma_{2,1}$, the above Borel
construction gives rise to a homotopy fibre sequence
$UT{F}\to \frM_{2,1}\to B\Gamma_2$, where $\imath$ occurs as the fibre
inclusion.\looseness-1

\enlargethispage{\baselineskip}
\begin{theo}\label{thm:s}
  The class $\bfs=\imath_*[UT{F}]$ is a generator of $H_3(\frM_{2,1};\Q)$.
\end{theo}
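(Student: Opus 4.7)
The plan is to apply the rational Serre spectral sequence of the Borel fibration $UT{F} \to \frM_{2,1} \to B\Gamma_2$ constructed above, and to show that $\bfs = \imath_*[UT{F}]$ is a non-zero permanent cycle. Since $H_3(\frM_{2,1};\Q) \cong \Q$ is already known from the calculations summarised in \tref{Theorem}{thm:tableG2}, non-vanishing will imply that $\bfs$ is a rational generator.

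Concretely, I would analyse
\[E^2_{p,q} \;=\; H_p(\Gamma_2;\,H_q(UT{F};\Q)) \;\Longrightarrow\; H_{p+q}(\frM_{2,1};\Q).\]
The Gysin sequence of the circle bundle $S^1 \to UT{F} \to F$, noting that $\chi(F) = -2 \ne 0$, gives $\dim_\Q H_q(UT{F};\Q) = 1, 4, 4, 1$ for $q = 0, \dotsc, 3$; the top group $H_3(UT{F};\Q) \cong \Q$ carries the trivial $\Gamma_2$-action because the mapping class group acts by orientation-preserving diffeomorphisms, and $H_1(UT{F};\Q) \cong H_2(UT{F};\Q) \cong H_1(F;\Q)$ is the standard symplectic representation. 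Hence $E^2_{0,3} = \Q$, and $\bfs$ is by construction the image of its generator under the edge map $E^2_{0,3} \twoheadrightarrow E^\infty_{0,3} \hookrightarrow H_3(\frM_{2,1};\Q)$; it remains to show that the differentials $d_r \colon E^r_{r,\,4-r} \to E^r_{0,3}$ vanish for $r = 2, 3, 4$.

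The differential $d_4$ factors through $H_4(\Gamma_2;\Q)$, which vanishes by Harer's bound $\operatorname{vcd}(\Gamma_2) = 3$; the differentials $d_2, d_3$ factor through the symplectic-twisted cohomology groups $H_p(\Gamma_2; H_1(F;\Q))$ for $p = 2, 3$. The main obstacle I expect is the control of these latter groups, which lie outside Harer's rational stability range and therefore cannot be read off from classical stable-range computations; I would attempt to handle them either via the Hochschild--Serre sequence of the Torelli extension $1 \to \cI_2 \to \Gamma_2 \to \operatorname{Sp}(4,\Z) \to 1$ combined with the known cohomology of $\operatorname{Sp}(4,\Z)$ in its defining representation, or, more pragmatically, by a direct chain-level calculation in the combinatorial model $(|P_{2,1}|, |P'_{2,1}|)$ from §\,\ref{subsec:compEx}, where $\imath_*[UT{F}]$ admits an explicit description as a slit-domain cycle whose non-triviality can be checked by the computer-aided methods used by \cite{Boes}.
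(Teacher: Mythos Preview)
Your overall framework coincides with the paper's: the same Borel fibration $UTF\to\frM_{2,1}\to B\Gamma_2$, the same Serre spectral sequence, the same identification of $\bfs$ with the edge map from $E^2_{0,3}\cong\Q$, and the correct reduction to the vanishing of $H_p(\Gamma_2;H_1(F;\Q))$ for $p=2,3$. But you stop exactly where the argument has to be made, and neither of your proposed attacks is the one the paper uses.

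The missing idea is a \emph{second} fibration, namely the universal surface bundle $F\to B\Gamma_2^1\to B\Gamma_2$. Its rational Serre spectral sequence has rows $H_p(\Gamma_2;\Q)$, $H_p(\Gamma_2;H_1(F;\Q))$, $H_p(\Gamma_2;\Q)$ and converges to $H_\bullet(\Gamma_2^1;\Q)\cong(\Q,0,\Q,0,\dotsc)$, which is known from Harer. Crucially, the paper invokes Igusa's theorem that $\Gamma_2$ is \emph{rationally acyclic}---a fact you do not use, relying instead only on the weaker vcd bound for $d_4$---so the outer two rows collapse to a single $\Q$ in column~$0$. One first reads off $M_0=M_1=0$ from the original spectral sequence (since $H_1(\frM_{2,1};\Q)=H_2(\frM_{2,1};\Q)=0$), and then the auxiliary spectral sequence forces $M_k=0$ for all $k$: for $k\ge 3$ nothing can kill $E^2_{k,1}$ and the target is zero, while for $k=2$ the differential $d_2\colon M_2\to E^2_{0,2}=\Q$ must vanish because that $\Q$ has to survive to account for $H_2(\Gamma_2^1;\Q)$.

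By contrast, your Torelli route runs into the fact that $\caI_2$ is an infinitely generated free group (Mess), so the coefficient module $H_1(\caI_2;\Q)\otimes H_1(F;\Q)$ in the Hochschild--Serre page is infinite-dimensional and its $\mathrm{Sp}(4,\Z)$-structure is not something you can read off the shelf; and the computer route, while feasible, is precisely what the cited result of \cite{Boes} avoids in favour of the clean spectral-sequence argument above.
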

\begin{proof}
  In a first step, we determine the rational homology of $UT{F}$ by inspecting
  the Serre spectral sequence of the $S^1$-bundle $UT{F}\to {F}$: note that
  $\pi_1({F})$ acts trivially on the homology of the fibre $H_\bullet(S^1;\Q)$
  since $UT{F}$ is orientable. Therefore, the second page of the spectral
  sequence is given as in \tref{Figure}{fig:SSeqTF}. The only interesting
  differential is given by multiplication with the Euler number $\chi({F})=-2$,
  and hence is an isomorphism. Therefore, we obtain
  $H_\bullet(UT{F};\Q)=(\Q,\Q^4,\Q^4,\Q)$. We note that the identifications
  $H_1(UT{F};\Q)\cong H_1({F};\Q)\cong H_2(UT{F};\Q)=:A$ are even isomorphisms of
  $\Gamma_2$-representations: each mapping class $\phi\in \Gamma_2$ induces a
  bundle automorphism of $UT{F}\to {F}$ and hence automorphism of spectral
  sequences converging to the respective automorphism of
  $H_\bullet(UT{F};\Q)$.%

  \begin{figure}[ht]
    \centering
    \begin{tikzpicture}[yscale=.9]
  \draw[black!20,thick,-to] (0,-.5) -- (0,1.5);
  \draw[black!20,thick,-to] (-.5,0) -- (2.5,0);
  \node at (0,0) {$\Q$};
  \node at (1,0) {$\Q^4$};
  \node at (2,0) {$\Q$};
  \node at (0,1) {$\Q$};
  \node at (1,1) {$\Q^4$};
  \node at (2,1) {$\Q$};
  \draw[-to] (1.8,.2) -- (.2,.8);
\end{tikzpicture}\vspace*{-2px}
    \caption{The second page of the rational Serre spectral sequence for
      $S^1\to UT{F}\to {F}$}\label{fig:SSeqTF}
  \end{figure}
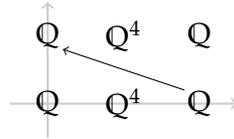

  Now we look at the Serre spectral sequence for the fibration
  \mbox{$UT{F}\to \frM_{2,1}\to B\Gamma_2$}. Here we use that we already know
  the rational homology of $\frM_{2,1}$: it is $(\Q,0,0,\Q)$, see
  \cite[Lem.\,1.3]{Harer-91}. Moreover, the group $\Gamma_2$ is rationally
  acyclic \cite{Igusa-60}, so if we put $M_k\coloneqq H_k(\Gamma_2;A)$, then the
  second page of the Serre spectral sequence is given as in
  \tref{Figure}{fig:SSeqM211}. The map of interest, namely
  \mbox{$\imath_*\colon \Q=H_3(UT{F};\Q)\to H_3(\frM_{2,1})$}, is given by the
  edge morphism
  \mbox{$E^2_{0,3}\to E^\infty_{0,3}\hookrightarrow H_3(\frM_{2,1};\Q)$}; hence
  we only have to show that all differentials that reach $E^2_{0,3}$ are
  trivial.

  \begin{figure}[h]
    \centering
    \begin{tikzpicture}[yscale=.82,xscale=1.1]
  \draw[black!20,thick,-to] (0,-.5) -- (0,3.5);
  \draw[black!20,thick,-to] (-.5,0) -- (4.5,0);
  \node at (0,0) {$\Q$};
  \node[black!30] at (1,0) {$0$};
  \node[black!30] at (2,0) {$0$};
  \node[black!30] at (3,0) {$0$};
  \node[black!30] at (4,0) {$0$};
  \node at (0,1) {$M_0$};
  \node at (1,1) {$M_1$};
  \node at (2,1) {$M_2$};
  \node at (3,1) {$M_3$};
  \node at (4,1) {$\dotsb$};
  \node at (0,2) {$M_0$};
  \node at (1,2) {$M_1$};
  \node at (2,2) {$M_2$};
  \node at (3,2) {$M_3$};
  \node at (4,2) {$\dotsb$};
  \node at (0,3) {$\Q$};
  \node[black!30] at (1,3) {$0$};
  \node[black!30] at (2,3) {$0$};
  \node[black!30] at (3,3) {$0$};
  \node[black!30] at (4,3) {$0$};
  \draw[-to] (1.7,2.25) -- (.35,2.75);
  \draw[-to] (2.7,1.25) -- (.2,2.75);
\end{tikzpicture}\vspace*{-2px}
    \caption{The second page of the rational Serre spectral sequence for
      \mbox{$UT{F}\to \frM_{2,1}\to B\Gamma_{2}$}, together with the only two
      differentials that could possibly kill $E^2_{0,3}$}\label{fig:SSeqM211}
  \end{figure}
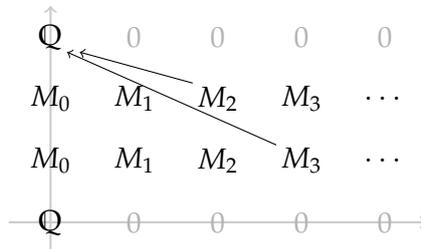
 
  To this aim, it is enough to show that $E^2_{2,2}=M_2$ and $E^2_{3,1}=M_3$ are
  trivial; we even show that \emph{all} $M_k$ are trivial: we start by noting
  that $M_0=E^\infty_{0,1}$ and $M_1=E^\infty_{1,1}$ are trivial. Now we use
  from \cite[p.\,33]{Harer-91} that $H_\bullet(\Gamma_2^1;\Q)\cong
  (\Q,0,\Q)$. If we consider the Serre spectral sequence $\bar E$ associated
  with the fibre sequence ${F}\to B\Gamma_2^1\to B\Gamma_2$, then
  $\smash{\bar E^2_{k,1}\cong M_k}$, and since $\Gamma_2$ is rationally acyclic
  and $M_0=M_1=0$, the spectral sequence looks as in \tref{Figure}{fig:SSeqM21}
  and converges to $\smash{H_\bullet(\Gamma_2^1;\Q)}$.  This already shows that
  $M_k=\bar E^\infty_{k,1}=0$ for $k\ge 3$. Finally, since $\bar E^2_{0,2}=\Q$
  has to survive, the differential $d_{2,1}$ has to be trivial; and thus,
  $M_2=\bar E^\infty_{2,1}=0$ as desired.
\end{proof}

\begin{figure}[ht]
  \centering
  \begin{tikzpicture}[yscale=.82,xscale=1.1]
  \draw[black!20,thick,-to] (0,-.5) -- (0,2.5);
  \draw[black!20,thick,-to] (-.5,0) -- (4.5,0);
  \node at (0,0) {$\Q$};
  \node[black!30] at (1,0) {$0$};
  \node[black!30] at (2,0) {$0$};
  \node[black!30] at (3,0) {$0$};
  \node[black!30] at (4,0) {$0$};
  \node[black!30] at (0,1) {$0$};
  \node[black!30] at (1,1.04) {$0$};
  \node at (2,1) {$M_2$};
  \node at (3,1) {$M_3$};
  \node at (4,1) {$\dotsb$};
  \node at (0,2) {$\Q$};
  \node[black!30] at (1,2) {$0$};
  \node[black!30] at (2,2) {$0$};
  \node[black!30] at (3,2) {$0$};
  \node[black!30] at (4,2) {$0$};
  \draw[-to] (1.7,1.2) -- (.3,1.8);
  \node at (1.4,1.6) {\scriptsize $d_{2,1}$};
\end{tikzpicture}
  \caption{The second page of the rational Serre spectral sequence $\bar E$ for
    ${F}\to B\Gamma_2^1\to B\Gamma_{2}$; here the differential $d_{2,1}$ has to be
    trivial, as the limit $H_2(B\Gamma_2^1;\Q)$ has dimension 1.}\label{fig:SSeqM21}
\end{figure}
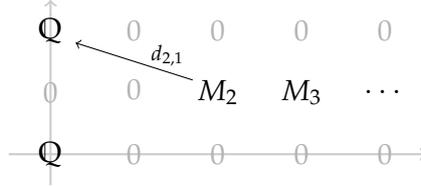

\subsection{Proof: Generator \texorpdfstring{$\bfv$}{v}}

In this subsection, we summarise the proof of \cite[Prop.\,7.2.2]{Boes} that the
homology class $\bfv$ generates the $\Z_3$-summand of
$H_4(\frM_{2,1};\Z)\cong \Z_2\oplus \Z_3$.  Recall that $\bfv$ was defined using
the Segal–Tillmann map $\mathrm{ST}\colon C_6(\R^2)\to \frM_{2,1}$, by taking
the ptolemaic epicycle $\tilde\bfv$, which generates $H_4(\Br_6;\Z)\cong \Z_3$ and is depicted in
\tref{Figure}{fig:H4C6}. It hence suffices to show:\looseness-1

\begin{theo}\label{thm:v}
  The Segal–Tillmann map $\mathrm{ST}\colon C_6(\R^2)\to \frM_{2,1}$ induces an
  injection on the $3$-torsion of $H_4$, and hence, $\bfv$ generates the
  $\Z_3$-summand of $H_4(\frM_{2,1};\Z)$.
\end{theo}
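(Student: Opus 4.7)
The plan is to reduce to a finite computation modulo $3$. First note that $H_4(C_6(\R^2);\Z)\cong \Z_3$ is pure $3$-torsion, so injectivity on $3$-torsion is equivalent to the single assertion $\bfv = \mathrm{ST}_*(\tilde\bfv)\ne 0$. We take as input the abstract identification $H_4(\frM_{2,1};\Z)\cong \Z_2\oplus\Z_3$, which contains a unique $\Z_3$-summand. Since $\bfv$ has order dividing $3$ it must land there, so it either vanishes or generates the $\Z_3$-summand. Since $H_3(\frM_{2,1};\Z)$ has no $3$-torsion (see \tref{Table}{tab:g2}), the universal coefficient theorem gives $H_4(\frM_{2,1};\bF_3)\cong \bF_3$, and non-vanishing of $\bfv$ is equivalent to that of its mod-$3$ reduction $\bar\bfv$.

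To show $\bar\bfv\ne 0$, I would use the combinatorial model $\frP_{2,1}$ together with Poincaré–Lefschetz duality~(\ref{eq:PL}) to reformulate the question as a cohomological pairing
\[
H_4(\frP_{2,1};\bF_3)\otimes H^8(P_{2,1},P_{2,1}';\bF_3)\longrightarrow \bF_3,
\]
using that the orientation system is trivial for $m=0$. The first step is to represent $\bar\bfv$ by an explicit $4$-cycle in $\frP_{2,1}$: the ptolemaic epicycle of \tref{Figure}{fig:H4C6} corresponds to a subgroup $\Z_3\subset\Br_6$ generated by a threefold rotation of pairs of strings; its image under the Segal–Tillmann homomorphism $\Br_6\to\Gamma_{2,1}$, realised via the Dehn twists of \tref{Figure}{fig:STDehn} and then Hilbert uniformisation, produces a concrete $4$-cycle in $\frP_{2,1}$. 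The second step is to pair this cycle against a generator of $H^8(P_{2,1},P_{2,1}';\bF_3)$ by enumerating transversal intersections with mod-$3$ orientation signs.

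The hard part is computational rather than conceptual: the complex $P_{2,1}$ has $17{,}136$ non-degenerate cells, so identifying both the $4$-cycle and a suitable cocycle and verifying non-vanishing of their pairing is only realistically feasible with computer assistance. Visy's column-reduction described in §\,\ref{subsec:hilbert} drastically shrinks the relevant chain complex and is what makes the calculation practically tractable. Conceptually one is reassured that detection at the prime $3$ is the natural place to look: the threefold symmetry inherent in $\tilde\bfv$ persists in its image under the Segal–Tillmann map, so non-cancellation modulo $3$ is expected, even though actually verifying it requires a direct enumeration of cells.
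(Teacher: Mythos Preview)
Your initial reductions are sound: since $H_4(C_6(\R^2);\Z)\cong\Z_3$ and $H_4(\frM_{2,1};\Z)\cong\Z_2\oplus\Z_3$, it suffices to show that the mod-$3$ reduction $\bar\bfv$ is non-zero, and Poincar\'e--Lefschetz duality in the slit model is a legitimate tool for this.

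However, the paper takes a completely different, conceptual route. Rather than a direct computation, it factors $\mathrm{ST}$ through Birman--Hilden theory: the hyperelliptic involution on the closed genus-$2$ surface has quotient $S^2$ with six branch points, and the Birman--Hilden theorem yields a homomorphism $\gamma\colon\Gamma_2\to\Gamma_0^6$. One checks that $\gamma\circ\beta\circ\mathrm{ST}$ coincides with the disc-capping map $\vartheta\colon\Br_6=\Gamma_{0,1}^6\to\Gamma_0^6$, where $\beta\colon\Gamma_{2,1}\to\Gamma_2$ caps the boundary. A separate lemma (analysing the Lyndon--Hochschild--Serre spectral sequences of the capping and Birman sequences with $\Z_{(3)}$-coefficients) shows that $\vartheta$ induces an \emph{isomorphism} on the $3$-torsion of $H_4$. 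Since $\vartheta$ factors through $\mathrm{ST}_*$, the latter is injective on $3$-torsion. No cell enumeration is needed.

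Your computational approach is not yet a proof: you have outlined a strategy but not executed it. The phrase ``I would use'', followed by the admission that the calculation ``is only realistically feasible with computer assistance'', leaves the central claim unverified. (A minor slip: $\Br_6$ is torsion-free, so there is no subgroup $\Z_3\subset\Br_6$; the epicycle is rather a $4$-cycle carried by a $4$-dimensional parameter space encoding the three inner rotations and the outer one, modulo the cyclic symmetry.) If actually carried through your approach would work, but the paper's Birman--Hilden argument is both complete and avoids any machine computation for this particular class.
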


The proof needs a little preparation: first of all, recall that the
Segal–Tillmann map has an algebraic counterpart on the level of mapping class
groups, which is a group homomorphism $\mathrm{ST}\colon \Br_6\to \Gamma_{2,1}$.
Now note that $\Br_6$ is the same as the mapping class group
$\Gamma_{0,1}^6$ of a disc with six punctures. Here we can cap the boundary with
a disc, resulting in a homomorphism $\vartheta\colon \Gamma_{0,1}^6\to \Gamma^6_0$. The
main ingredient of the proof of \tref{Theorem}{thm:v} is the following
observation from \cite[Prop.\,7.2.3]{Boes}:\looseness-1

\begin{lem}\label{lem:ST3tor}
  The map $\vartheta\colon \Gamma_{0,1}^6\to \Gamma^6_0$ induces an isomorphism on
  the $3$-torsion of $H_4$.
\end{lem}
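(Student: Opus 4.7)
The map $\vartheta$ is the \emph{capping-off} homomorphism that glues a disc onto the boundary circle of the $6$-punctured disc. As such, it fits into the Birman exact sequence
\[
  1 \longrightarrow \langle T_\partial\rangle \longrightarrow \Gamma_{0,1}^6 \longrightarrow \Gamma_0^6 \longrightarrow 1,
\]
where $T_\partial$ is the Dehn twist about a simple closed curve parallel to the boundary; since $T_\partial$ lies in the centre of $\Gamma_{0,1}^6=\Br_6$ and has infinite order, this is a central extension with kernel $\langle T_\partial\rangle\cong\Z$. On classifying spaces, this yields a fibration
\[
  S^1 \longrightarrow B\Gamma_{0,1}^6 \longrightarrow B\Gamma_0^6,
\]
so I would analyse the induced map on $H_4$ using the Gysin/Lyndon--Hochschild--Serre spectral sequence with $\Z_3$-coefficients. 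Since $\Z_3$ is a field and the extension is central, the $E_2$-page reads
\[
  E^2_{p,q} = H_p(\Gamma_0^6;\Z_3)\otimes H_q(S^1;\Z_3),
\]
and is concentrated in the two rows $q=0,1$; the unique possibly non-zero differential $d_2$ is (up to sign) cap-product with the mod-$3$ reduction of the Euler class $e\in H^2(\Gamma_0^6;\Z)$ classifying the extension. The map $\vartheta_*$ is the edge morphism and factors through $E^\infty_{4,0}\hookrightarrow E^2_{4,0}=H_4(\Gamma_0^6;\Z_3)$.

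By Cohen's calculation \cite{Cohen-Lada-May} of the mod-$p$ homology of braid groups, $H_4(\Br_6;\Z_3)\cong \Z_3$, generated by the mod-$3$ reduction of the ptolemaic epicycle $\tilde\bfv$. The universal coefficient theorem then identifies this $\Z_3$ with the $3$-torsion of $H_4(\Br_6;\Z)$. So the lemma will follow once I verify that $\vartheta_*$ is an isomorphism on this $\Z_3$, equivalently that the edge factorisation
\[
  H_4(\Br_6;\Z_3)\twoheadrightarrow E^\infty_{4,0}\hookrightarrow H_4(\Gamma_0^6;\Z_3)
\]
consists of isomorphisms.

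The surjectivity of the first map amounts to the vanishing of $E^\infty_{3,1}$, and the identification $E^\infty_{4,0}=E^2_{4,0}$ amounts to the vanishing of $d_2\colon E^2_{4,0}\to E^2_{2,1}$ together with the surjectivity of $d_2\colon E^2_{5,0}\to E^2_{3,1}$. Both reduce to $3$-primary facts about $H_\bullet(\Gamma_0^6;\Z_3)$ in degrees $\le 5$ and about the mod-$3$ reduction of the Euler class $e$. I would obtain these by exploiting the moduli interpretation $B\Gamma_0^6\simeq M_{0,6}$ and its presentation as a successive configuration-space bundle, for which the $3$-Sylow structure is controlled by the rotational $\Z/3$-subgroup fixing a regular hexagonal configuration; equivalently, I would run the comparison with the Serre spectral sequence of $F_6(S^2)/\mathrm{PSL}_2(\C)\to B\Gamma_0^6\to B\fS_6$ and apply Quillen's $F$-isomorphism theorem at $p=3$ to detect $H_\bullet(\Gamma_0^6;\Z_3)$ on elementary abelian $3$-subgroups.

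The main obstacle is the bookkeeping of the differentials: one has to pin down the mod-$3$ behaviour of the Euler class $e$ and show that the ptolemaic epicycle class sits in the kernel of $\cap e$ on $E^2_{4,0}$ while the potentially obstructing class in $E^2_{3,1}$ is killed by a $d_2$ from $E^2_{5,0}$. The neat conceptual point underlying this is that the periodic $\Z/3$-subgroup of $\Br_6$ generating the ptolemaic epicycle intersects $\langle T_\partial\rangle$ trivially, so it injects into $\Gamma_0^6$; hence its induced class in $H_4$ is preserved verbatim by $\vartheta_*$, and by Quillen stratification this suffices to identify both $3$-primary parts with $\Z_3$ and to see that $\vartheta_*$ sends generator to generator.
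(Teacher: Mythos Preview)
Your exact sequence $1\to\langle T_\partial\rangle\to\Gamma_{0,1}^6\to\Gamma_0^6\to 1$ is incorrect: the kernel of $\vartheta$ is strictly larger than $\Z$. When you cap the boundary of the punctured disc, the extended diffeomorphism fixes the capping disc pointwise, so the image of the capping homomorphism lands in $\Gamma_0^{6,1}$ (sphere with six permutable punctures and one isolated one), not in $\Gamma_0^6$. The further projection $\Gamma_0^{6,1}\to\Gamma_0^6$ that forgets the isolated puncture has kernel $\pi_1(S^2\setminus\{6\text{ points}\})\cong F_5$. Concretely, in $\Gamma_0^6$ one has both relations $(\sigma_1\cdots\sigma_5)^6=1$ and $\sigma_1\cdots\sigma_4\sigma_5^2\sigma_4\cdots\sigma_1=1$, and the second of these is not a power of $T_\partial$ in $\Br_6$. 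The paper handles exactly this by factoring $\vartheta$ through $\Gamma_0^{6,1}$ and running two separate Lyndon--Hochschild--Serre spectral sequences with $\Z_{(3)}$-coefficients, one for each short exact sequence.

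Your closing argument has a second, independent problem: braid groups are torsion-free, so there is no ``periodic $\Z/3$-subgroup of $\Br_6$'' carrying the ptolemaic epicycle, and Quillen's $F$-isomorphism theorem gives no information about $H_\bullet(\Br_6;\bF_3)$ through elementary abelian subgroups (there are none besides the trivial one). The class $\tilde\bfv$ is a genuine higher homology class, not the image of a finite-subgroup fundamental class; on the $\Gamma_0^6$ side there \emph{is} $3$-torsion, but you cannot transport it back to $\Br_6$ in the way you suggest.
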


The proof of \tref{Lemma}{lem:ST3tor} relies on the observation that $\vartheta$
is a composition of two projection maps in short exact sequences: the capping
sequence $\Z\to \Gamma_{0,1}^6\to \Gamma_0^{6,1}$, whose target is the mapping
class group of a sphere with \emph{seven} punctures, where the first six
punctures are allowed to be permuted and the last one is isolated; and the
Birman sequence
$\pi_1(S^2\setminus\{p_1,\dotsc,p_6\})\to \Gamma_0^{\smash{6,1}}\to \Gamma_0^6$ that
forgets the isolated puncture. Both projections induce isomorphisms on the
$3$-torsion of $H_4$, as Boes shows by considering the respective
Lyndon–Hochschild–Serre spectral sequences with coefficients in the ring
$\Z_{(3)}\subseteq\Q$ where all primes except for $3$ have been inverted. As a
composition of these two maps, the same holds for $\vartheta$: this proves the
Lemma. Now we are ready to give the main proof of \tref{Theorem}{thm:v}:\vspace*{.5\baselineskip}

\begin{tFigure}
  \centering
  \includegraphics[width=.7\columnwidth]{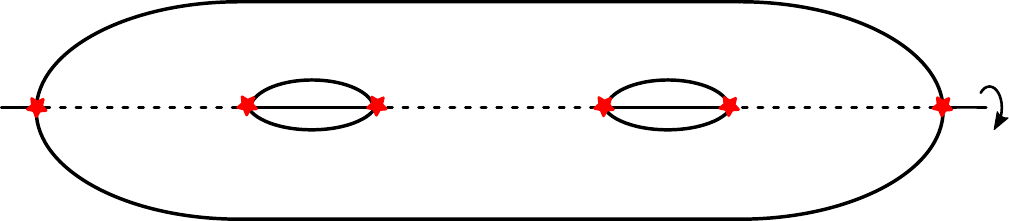}
  \caption{The hyperelliptic involution of the closed surface of genus
    2 and its 6 fixed points}
  \label{fig:HyperellipticInvolutionF2}
\end{tFigure}

\begin{proof}[Proof of \tref{Theorem}{thm:v}]
  The Birman–Hilden theorem \cite{Birman-Hilden} (see also
  \cite[§\,9.4.2]{Farb-Margalit}) gives rise to a map
  $\gamma\colon \Gamma_{2}\to \Gamma^6_0$ as follows: let ${F}$ be a closed
  surface of genus $2$ and consider the hyperelliptic involution
  $\tau\colon {F}\to {F}$ as in
  \tref{Figure}{fig:HyperellipticInvolutionF2}. Let $\caP\subseteq {F}$ be the
  set of fixed points: it has exactly six elements. Then $\tau^2=\mathrm{id}$
  and ${F}\hspace*{.5px}/\hspace*{-1px}\langle\tau\rangle$ is identified with a
  sphere $S^2$; we call the image of the fixed points $\caP'\subseteq S^2$. Now
  let $\mathrm{SHomeo}({F})\subseteq\mathrm{Homeo}({F})$ be the subgroup of
  those orientation-preserving homeomorphisms that commute with $\tau$, called
  \emph{symmetric homeomorphisms}. Since each symmetric homeomorphism has to fix
  $\caP$ as a set, we obtain a map
  $\mathrm{SHomeo}({F})\to \mathrm{Homeo}_{\caP'}(S^2)$ to the group of
  orientation-preserving homeomorphisms on $S^2$ that fix $\caP'$ as a set, by
  quotienting out $\tau$. By passing to $\pi_0$, we obtain a zig-zag of
  group homomorphisms
  \[\begin{tikzcd}\Gamma_{2} &
      \pi_0(\mathrm{SHomeo}({F}))\ar[l,swap,"\imath"]\ar[r,"\jmath"] &
      \Gamma_0^6.\end{tikzcd}\]
  Since all standard generators of $\Gamma_2$ can easily be represented by
  symmetric homeomorphisms, $\imath$ is surjective. Secondly, one ingredient for
  the Birman–Hilden theorem states that if two symmetric homeomorphisms $\phi$
  and $\psi$ are isotopic, then they are even isotopic through \emph{symmetric}
  homeomorphisms, see \cite[Prop.\,9.4]{Farb-Margalit}. This shows that $\imath$
  is also injective, and hence an isomorphism of groups: we put
  $\gamma\coloneqq \jmath\circ\imath^{-1}$.

  If we consider the capping morphism $\beta\colon \Gamma_{2,1}\to \Gamma_2$ and
  the Segal–Tillmann map $\mathrm{ST}\colon \Br_6\to \Gamma_{2,1}$, then
  the composition $\beta\circ\mathrm{ST}\colon \Br_6\to \Gamma_2$ can,
  without any choice of isotopy, be described in terms of symmetric
  homeomorphisms, as the Dehn twists about the curves $\alpha_1,\dotsc,\alpha_5$
  in \tref{Figure}{fig:STDehn} (now for genus $2$) are already
  symmetric. Quotienting out $\tau$, we recover the standard generators of
  $\Gamma_0^6=\Br_6(S^2)$, the sixth braid group of the sphere.  This
  shows that $\gamma\circ\beta\circ\mathrm{ST}$ agrees with the capping map
  $\vartheta$, so since $\vartheta$ induces an isomorphism on the $3$-torsion of
  $H_4$, it follows that $\mathrm{ST}$ induces an injection on the $3$-torsion
  of $H_4$, which finishes the proof.
\end{proof}

\subsection{Proof: Rest of the tables}

Now we have everything at hand to verify the remaining entries in our three
tables of generators from the introduction. Recall that we already know the
isomorphism types of the abelian groups $H_i(\frM_{g,1}^m)$ for small $g$ and
$m$; we only have to argue why our classes are generators.

\begin{theo}\label{thm:tableG0}
  The generators for $H_\bullet(\frM_{0,1}^\bullet;\Z)$ are given as in \tref{Table}{tab:g0}.
\end{theo}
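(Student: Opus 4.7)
The plan is to invoke the classical identification $\fM_{0,1}^m \simeq C_m(\mathring D^2) \simeq B\Br_m$ recalled in the introduction, so that the homology groups are those of the braid groups, which are well-known (Arnold, Fuks, and \cite[§\,\textsc{iii}]{Cohen-Lada-May}). This pins down the abelian group isomorphism types listed in \tref{Table}{tab:g0}, and the actual work is to identify our specific classes $\bfa$ and $\bfb$ with generators of the respective cyclic summands.

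For the row $i=0$, each space $\fM_{0,1}^m$ is path-connected, so $H_0 \cong \Z$. The iterated Pontrjagin product $\bfa^m$ is represented by any point of $\fM_{0,1}^m$, hence generates. The case $m=0$ is trivial since $\fM_{0,1}$ is a point with $H_0 = \Z\gen{1}$.

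For the row $i=1$ and $m\ge 2$: the class $\bfb = Q\bfa$ is, by definition of the Dyer--Lashof square, a fundamental class of $\fM_{0,1}^2 \simeq S^1$, and so generates $H_1(\fM_{0,1}^2) \cong \Z$. For $m>2$, I would invoke the split injectivity of \mbox{$\bfa \cDot - \colon H_\bullet(\fM_{0,1}^{m-1}) \to H_\bullet(\fM_{0,1}^m)$} from \cite[Thm.\,1.3]{CFB-Tillmann-01} (discussed in §\,\ref{subsec:b}). Since both source and target have $H_1 \cong \Z$, a split injection $\Z \to \Z$ maps a generator to a generator; inducting from $m=2$, the class $\bfa^{m-2}\bfb$ generates $H_1(\fM_{0,1}^m)$.

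For the row $i=2$ and $m \in \{4,5\}$: by graded commutativity of the Pontrjagin product, $\bfb^2 = (-1)^{1\cdot 1}\bfb^2 = -\bfb^2$, so $2\bfb^2 = 0$. Non-vanishing of $\bfb^2$ is detected via the mod-$2$ reduction: by the Dyer--Lashof algebra description stated directly after the theorem, $\bigoplus_m H_\bullet(\fM_{0,1}^m;\bF_2)$ is the polynomial algebra $\bF_2[Q^j\bfa]_{j\ge 0}$, in which $(Q\bfa)^2$ is non-zero. Hence $\bfb^2$ is the non-trivial element of $H_2(\fM_{0,1}^4) \cong \Z_2$, and the case $m=5$ follows again by split injectivity of $\bfa\cDot -$. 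The only step requiring genuine input beyond bookkeeping is this detection of $\bfb^2$ via the free Dyer--Lashof algebra structure from \cite[§\,\textsc{iii}]{Cohen-Lada-May}; everything else is immediate from the constructions of $\bfa$ and $\bfb$ together with known braid-group homology.
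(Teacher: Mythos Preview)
Your proof is correct and follows essentially the same approach as the paper. The only minor difference is in the $H_1$ step: the paper identifies $\bfa^{m-2}\bfb$ directly as the class of an elementary braid (which visibly generates $\Br_m^{\mathrm{ab}}$), whereas you propagate from $m=2$ via the split injectivity of $\bfa\cDot-$; both arguments are valid and amount to the same thing.
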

\begin{proof}
  Both $\frM_{0,1}$ and $\frM_{0,1}^1$ are contractible, with ground classes $1$
  and $\bfa$. Moreover, $\frM_{0,1}^m$ is a classifying space for the braid
  group $\Br_m$, so $H_1(\frM_{0,1}^m)$ is generated by a loop corresponding to
  an elementary braid: this is the class $\bfa^{m-2}\bfb$.  Finally, the
  Pontrjagin product with $\bfb$ is injective over $\bbF_2$, as discussed in
  §\,\ref{subsec:b}, and hence $\bfb^2$ is non-trivial in
  $H_2(\frM_{0,1}^4;\bF_2)$. This shows that $\bfb^2$ cannot vanish integrally,
  and hence generates $H_2(\frM_{0,1}^4)=\Z_2$.  Finally, $\bfa\bfb^2\ne 0$ by
  the injectivity of $\bfa\cdot-$.
\end{proof}

\begin{theo}\label{thm:tableG1}
  The generators for $H_\bullet(\frM_{1,1}^\bullet;\Z)$ are given as in \tref{Table}{tab:g1}.
\end{theo}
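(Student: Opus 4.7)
The plan is to proceed column-by-column, taking as given the underlying abelian-group structure of each $H_i(\frM_{1,1}^m;\Z)$, which is already established in the cited literature \cite{Ehrenfried, Mehner, Boes-Hermann}. The remaining task is to show that each listed class lies in, and generates, the indicated summand. The main tools are: (a) split injectivity of $\bfa \cdot {-}$ from \cite{CFB-Tillmann-01}; (b) injectivity of $\bfb \cdot {-}$ modulo $2$ from §\ref{subsec:b}; (c) Bianchi's splitting (\ref{eq:Andrea}) to distinguish classes of distinct ``type''; and (d) the identifications of $\bfd$, $\bfe$, $\bff$ as non-trivial classes already carried out in §\ref{sec:generators}.

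First I would dispatch the base cases in the columns $m \le 1$. The entries $\bfc$ and $\bfa\bfc$ are ground classes of the respective components, and $\bfd$ was identified as a generator of $H_1(\frM_{1,1};\Z) \cong \Z$ in §\ref{subsec:compEx} (via the intersection argument with the dual of a single non-degenerate $5$-cell in the incidence graph of $(P_{1,1}, P_{1,1}')$). By the split injectivity of $\bfa\cdot{-}$, the class $\bfa\bfd$ has infinite order in $H_1(\frM_{1,1}^1;\Z) \cong \Z$, and hence generates it. Finally, $\bfe$ generates $H_2(\frM_{1,1}^1;\Z) \cong \Z_2$ by the argument already recalled in §\ref{sec:generators}, and $\bff$ generates $H_3(\frM_{1,1}^2;\Z) \cong \Z_2$ by §\ref{subsec:f}.

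Next I would treat the columns $m = 2, 3, 4$ by proving that each named torsion class is non-zero modulo $2$ and lies in a distinct direct summand. Under Bianchi's isomorphism (\ref{eq:Andrea}), every monomial $\bfa^i \bfb^j \bfc^{\varepsilon}\bfd^{\delta}\bfe^{\epsilon}\bff^{\varphi}$ corresponds to a tensor product whose Dyer–Lashof factor is $\bfa^i \cdot (Q\bfa)^j$ (i.e.\ a specific monomial in $\bbF_2[Q^k\bfa]$) and whose moduli factor lies in $H_\bullet(\frM_{1,1}; \mathrm{Sym}^k\caH)$. Distinct named monomials produce either distinct Dyer–Lashof factors, or the same Dyer–Lashof factor paired with classes in distinct $\mathrm{Sym}^k\caH$-gradings; in either case they land in linearly independent $\bbF_2$-summands of $\bigoplus_m H_\bullet(\frM_{1,1}^m;\bbF_2)$, and hence, by injectivity of $\bfa\cdot{-}$ and $\bfb\cdot{-}$ modulo $2$ together with the non-triviality of $\bfc, \bfd, \bfe, \bff$ mod $2$, every named class is non-zero modulo $2$. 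Since each such class sits in a $\Z_2$-summand of the integral homology and has non-trivial mod-$2$ reduction, it must generate that $\Z_2$. For the $\Z$-summands carrying $\bfa^{m-1}\bfd$ in the degree-$1$ rows, split injectivity of $\bfa\cdot{-}$ provides infinite order, which forces $\bfa^{m-1}\bfd$ to be a generator since there is a unique free summand of rank one.

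The principal obstacle is bookkeeping rather than any single deep step: one must verify for each of the roughly two dozen named monomials that (i) the Bianchi bidegree places it in the correct isotypic direct summand, and (ii) its mod-$2$ reduction is non-zero in that summand. The free summands appearing in degrees $\ge 3$ with $m \in \{3,4\}$ (the $\Z$ in $H_3(\frM_{1,1}^3;\Z)$, the $\Z^2$ and $\Z^3$ in the degree-$4$ rows, and so on) are not given explicit generators in the statement; for them the content of the theorem is only that the ranks and the named $\Z_2$-subsummands agree with the cited computations, which follows from the known isomorphism type together with the mod-$2$ arguments above. In particular, the unnamed $\Z_2$-summand in $H_4(\frM_{1,1}^4;\Z)$ is identified merely by universal coefficients once all other $\Z_2$-summands have been accounted for.
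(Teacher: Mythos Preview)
Your approach is essentially the same as the paper's: both proceed by taking the abstract group structure as known, establishing the base generators $\bfc,\bfd,\bfe,\bff$ from §\,\ref{sec:generators}, propagating via split injectivity of $\bfa\cdot{-}$ and mod-$2$ injectivity of $\bfb\cdot{-}$, and invoking Bianchi's decomposition (\ref{eq:Andrea}) to separate monomials that land in the same bidegree (e.g.\ $\bfa\bfe$ versus $\bfb\bfd$, $\bfa\bff$ versus $\bfb\bfe$).

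There is one genuine gap. In the row $H_1(\frM_{1,1}^m)\cong \Z\oplus\Z_2$ for $m\ge 2$, you assert that $\bfb\bfc$ ``sits in a $\Z_2$-summand'', but non-triviality mod $2$ does not by itself place it in the torsion subgroup: a priori $\bfb\bfc$ could be an odd multiple of $\bfa^2\bfd$ plus torsion. You need to show $2\cdot\bfb\bfc=0$, and the paper does this explicitly via \tref{Proposition}{prop:ckill}: $2\bfb\bfc = [\bfa,\bfa]\cdot\bfc = 0$. This is the only place in the table where the named torsion class is not automatically $2$-torsion by virtue of one of its factors already having order $2$ (for $\bfb\bfd,\bfb\bfe,\bfa\bff,\bfb\bff,\bfb^2\bfc,\bfb^2\bfd$ this is immediate since $\bfe$, $\bff$, and $\bfb^2$ are $2$-torsion). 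Once you insert this step, your argument is complete and matches the paper's.
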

\begin{proof}
  The row for the $0$\textsuperscript{th} homology is obvious, as $\bfa^m\bfc$ is
  the ground class of $\frM_{1,1}^m$. Moreover, we have already seen that $\bfd$
  and $\bfe$ generate their respective homology groups. By the injectivity of
  $\bfa\cdot -$, we conclude that all $\bfa^m\bfd$ are of infinite order, and hence
  generate the free part of $H_1(\frM_{1,1}^m)$ for $m\le 4$. As before,
  $\bfb\bfc$ is non-trivial modulo $2$ and hence non-trivial integrally. On the
  other hand, we see $2\cdot\bfb\bfc=[\bfa,\bfa]\cdot \bfc=0$ by
  \tref{Proposition}{prop:ckill}. Thus, $\bfb\bfc$ generates the $\Z_2$-summand
  of $H_1(\frM_{1,1}^2)$. Again, by the injectivity of $\bfa\cdot -$, the same
  holds for $\bfa\bfb\bfc$ and $\bfa^2\bfb\bfc$. Having found all generators
  for $H_0$ and $H_1$, we now consider $H_2$: since $\bfe\ne 0$, we conclude
  that $\bfa^m\bfe\ne 0$. As before, the mod-$2$ reduction of $\bfb\bfd$ is
  non-trivial, and hence, $\bfb\bfd$ itself is non-trivial. Under the
  isomorphism (\ref{eq:Andrea}), we see that the mod-$2$ reduction of $\bfa\bfe$
  lies in the direct summand
  $\bbF_2\gen{\bfa}\otimes H_1(\frM_{1,1};\on{Sym}^1\caH)$, while $\bfb\bfd$
  lies in the direct summand
  $\bbF_2\gen{Q\bfa}\otimes H_1(\frM_{1,1};\on{Sym}^0\caH)$, so since they are
  non-trivial, they cannot agree. By the injectivity of adding a puncture, we
  get that $\bfa^2\bfe$ and $\bfa\bfb\bfd$, as well as $\bfa^3\bfe$ and
  $\bfa^2\bfb\bfd$ are different generators. Finally, $\bfb^2\bfc$ is
  non-trivial and differs from $\bfa^3\bfe$ and $\bfa^2\bfb\bfd$ by the very
  same isomorphism (\ref{eq:Andrea}). For $H_3$, we already know that $\bff$ is
  a generator, so the same applies to $\bfa\bff$ and $\bfa^2\bff$. Moreover, the
  mod-$2$ reduction of $\bfb\bfe$ is non-trivial, and under the isomorphism
  (\ref{eq:Andrea}), it lies in the summand
  $\bbF_2\gen{Q\bfa}\otimes H_1(\frM_{1,1};\on{Sym}^1\caH)$, while $\bfa\bff$
  lies in the summand
  $\bbF_2\gen{\bfa}\otimes H_1(\frM_{1,1};\on{Sym}^2\caH)$: this shows that
  $\bfb\bfe$ and $\bfa\bff$ are linearly independent. As both classes are
  of order $2$, they generate the $2$-torsion part of $H_3(\frM_{1,1}^3;\Z)$. By the
  injectivity of adding a puncture, $\bfa^2\bff$ and $\bfa\bfb\bfe$ are linearly
  independent. Moreover, $\bfb^2\bfd$ differs from $\bfa^2\bff$ and
  $\bfa\bfb\bfe$ by the same argument as before, and $\bfb\bff$ is non-trivial
  and of order $2$.
\end{proof}

\begin{theo}\label{thm:tableG2}
  The generators for $H_\bullet(\frM_{2,1}^\bullet;\Z)$ are given as in \tref{Table}{tab:g2}.
\end{theo}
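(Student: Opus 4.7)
The plan mirrors the proofs of Theorems \ref{thm:tableG0} and \ref{thm:tableG1}, relying on four ingredients: integral injectivity of $\bfa\cdot-$ by \cite[Thm.\,1.3]{CFB-Tillmann-01}; mod-$2$ injectivity of $\bfb\cdot-$ from §\ref{subsec:b}; Bianchi's bigraded splitting (\ref{eq:Andrea}) of $\bigoplus_m H_\bullet(\frM_{2,1}^m;\bbF_2)$; and Theorems \ref{thm:s} and \ref{thm:v} for the classes $\bfs$ and $\bfv$. The abstract isomorphism types of the groups $H_\bullet(\frM_{2,1}^m;\Z)$ are already established by \cite{Ehrenfried,Abhau-CFB-Ehrenfried,Godin,Boes-Hermann}, so it remains only to verify that each indicated class is non-trivial of the claimed order and that named classes sharing a group are linearly independent.

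First I would treat the column for $\frM_{2,1}$ row by row. The row $H_0$ is obvious. For $H_1\cong\Z_{10}$, use that $\Gamma_{2,1}^{\text{ab}}\cong\Z_{10}$ is generated by a Dehn twist about a non-separating curve, realised by $\bfc\bfd$ via the genus-stabilisation of $\bfd\in H_1(\frM_{1,1})$. For $\bfd^2\in H_2$, graded commutativity of the odd-degree class $\bfd$ gives $2\bfd^2=0$, and non-vanishing is established by detecting the mod-$2$ reduction in a specific summand of (\ref{eq:Andrea}). For $H_3$, Theorem \ref{thm:s} identifies $\lambda\bfs$ as the free generator; the class $T\bfe$ is $2$-torsion, since $\bfe\in H_2(\frM_{1,1}^1)\cong\Z_2$ already has $2\bfe=0$, so $2\cdot T\bfe=T(2\bfe)=0$, and its non-vanishing is again verified modulo $2$ through (\ref{eq:Andrea}), placing it in a different Bianchi-summand from the reduction of $\lambda\bfs$. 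For $H_4$, Theorem \ref{thm:v} identifies the $\Z_3$-generator $\bfv$; the $\Z_2$-summand receives no named representative.

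To populate the columns for $\frM_{2,1}^1$ and $\frM_{2,1}^2$, I would systematically apply $\bfa\cdot-$ (integrally injective) to propagate the existing generators, and $\bfb\cdot-$ (mod-$2$ injective) to produce $\bfb\bfc^2$, $\bfb\bfc\bfd$, $\bfb\bfd^2$, and $\bfb\cdot T\bfe$. Linear independence of classes in a common group is detected via the bigrading of (\ref{eq:Andrea}): a pure $\bfa^k$-prefactor contributes to the Dyer--Lashof summand $\bbF_2\gen{\bfa^k}$, while a factor of $\bfb$ contributes to $\bbF_2\gen{Q\bfa}$. For instance $\bfa^2\bfd^2$ and $\bfb\bfc\bfd$ in $H_2(\frM_{2,1}^2)$, and $\bfa^2\cdot T\bfe$ and $\bfb\bfd^2$ in $H_3(\frM_{2,1}^2)$, are separated this way; analogous arguments handle $\bfb\cdot T\bfe$ versus the $\bfa$-multiplied classes in $H_4(\frM_{2,1}^2)$. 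The unlabelled summands in the table are accounted for by matching dimensions against the known abstract isomorphism types, with no claim of an explicit description in terms of our operations.

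The principal obstacle I anticipate is the bookkeeping in the richer cells $H_3(\frM_{2,1}^2)$ and $H_4(\frM_{2,1}^2)$, where several named $\Z$-, $\Z_2$-, and $\Z_3$-summands coexist with anonymous ones of the same order. The three sources of evidence, namely Bianchi's mod-$2$ splitting for $2$-primary information, the Segal--Tillmann argument of Theorem \ref{thm:v} for $3$-torsion, and the fibre-sequence argument of Theorem \ref{thm:s} for the rational part, must be combined carefully so that each named class is localised in a distinct summand by separating these three primary decompositions. Once each generator is pinned down in this way, the remaining verification is essentially routine.
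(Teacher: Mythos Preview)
Your overall strategy matches the paper's: propagate from the $m=0$ column by injectivity of $\bfa\cdot-$ and mod-$2$ injectivity of $\bfb\cdot-$, separate coexisting $2$-torsion classes via the bigrading of~(\ref{eq:Andrea}), and invoke Theorems~\ref{thm:s} and~\ref{thm:v} for $\bfs$ and~$\bfv$.

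There is, however, a genuine gap in how you handle the base column $\frM_{2,1}$. Bianchi's splitting~(\ref{eq:Andrea}) decomposes $\bigoplus_m H_\bullet(\frM_{g,1}^m;\bF_2)$ according to the Dyer--Lashof prefactor; at $m=0$ the only summand present is $\bF_2\gen{1}\otimes H_\bullet(\frM_{2,1};\mathrm{Sym}^0\caH)$, which is simply $H_\bullet(\frM_{2,1};\bF_2)$ itself. So~(\ref{eq:Andrea}) is vacuous there: it cannot detect non-triviality of $\bfd^2$ or of $T\bfe$, nor can it place $T\bfe$ and the reduction of $\lambda\bfs$ in ``different Bianchi summands'' --- they live in the same one. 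The paper does not attempt to extract these facts from~(\ref{eq:Andrea}). Instead, non-triviality of $\bfd^2$ is quoted from \cite[Ex.\,4]{Godin}, and non-triviality of the mod-$2$ reduction of $T\bfe$ is a computer-aided calculation carried out in \cite[p.\,133]{Mehner} using the Poincar\'e--Lefschetz model of~\S\,\ref{sec:models}. (Separating $T\bfe$ from $\lambda\bfs$ then needs no further work: one has order~$2$ and the other infinite order, so they generate complementary summands of $\Z\oplus\Z_2$.)

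Once these two external inputs are in place, the rest of your argument --- pushing forward by $\bfa\cdot-$ and $\bfb\cdot-$ and using~(\ref{eq:Andrea}) to separate classes with genuinely different Dyer--Lashof prefactors, such as $\bfa^2\bfd^2$ versus $\bfb\bfc\bfd$ or $\bfa^2\cDot T\bfe$ versus $\bfb\bfd^2$ --- goes through and coincides with the paper's proof.
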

\begin{proof}
  Again, $H_0$ is obvious.  Since $\bfd$ is represented by a single Dehn twist
  about a non-separating curve in a surface of genus $1$ and one boundary curve,
  the same applies to $\bfc\bfd$, and we know that each such Dehn twist
  generates $H_1(\frM_{2,1})=\Gamma_{2,1}^{\text{ab}}$. Using again that adding
  a puncture is injective, we find that $\bfa\bfc\bfd$ and $\bfa^2\bfc\bfd$ are
  generators.  Moreover, $\bfb\bfc^2$ is non-trivial modulo $2$, and hence
  non-trivial integrally, and it is of order $2$, as already $\bfb\bfc$ is.
  The non-triviality of $\bfd^2$ is due to \cite[Ex.\,4]{Godin}, and the
  non-triviality of the mod-$2$ reduction of $T\bfe$ has been shown by a
  computer-aided calculation in \cite[p.\,133]{Mehner}. Since $T\bfe$ is of order $2$
  and $\lambda\bfs$ is a free generator by \tref{Theorem}{thm:s}, the
  classes $\lambda\bfs$ and $T\bfe$ generate
  $H_3(\frM_{2,1})$. The very same arguments as before justify the entries
  $\lambda\bfa\bfs$, $\bfa\cDot T\bfe$, $\lambda\bfa^2\bfs$,
  $\bfa^2\cDot T\bfe$, and $\bfb\bfd^2$. By \tref{Theorem}{thm:v}, we know that
  $\bfv$ generates the $\Z_3$-summand in $H_4(\frM_{2,1})$. This justifies the
  entries $\bfa\bfv$ and $\bfa^2\bfv$, and again, $\bfb\cDot T\bfe$ is
  non-trivial and, using the isomorphism (\ref{eq:Andrea}), different from
  $\bfa^2\bfv$.
\end{proof}

\appendix

\section{Calculations modulo 2}
\label{sec:mod2}
In this short Appendix, we discuss analogous tables for homology with coefficients in
$\bbF_2$. The mere $\bF_2$-Betti numbers can easily be computed via the
universal coefficient theorem from our integral Tables \ref{tab:g0},
\ref{tab:g1}, and \ref{tab:g2}. Moreover, the mod-$2$ reductions of our integral
generators (for the $\Z$- and the $\Z_2$-summands) appear as generators for the
$\bbF_2$-homology. However, several further summands arise from the torsion
part in the universal coefficient theorem, and we can identify some further
generators of these summands.\looseness-1

First of all, note that \tref{Table}{tab:m2g0} for $g=0$ only shows the first
grading components of the Dyer–Lashof algebra $\bbF_2[Q^j\bfa]_{j\ge 0}$. For
the other two Tables \ref{tab:m2g1} and \ref{tab:m2g2}, we extensively use the
decomposition (\ref{eq:Andrea}) from \cite{Bianchi}; and for
\tref{Table}{tab:m2g1}, we additionally use from \cite[p.\,13]{Mehner} that
$E\bfb$ is non-trivial in $H_3(\frM_{1,1}^1)$, so it must generate the
$\on{Tor}$-summand. By the naturality of the short exact sequence from the
universal coefficient theorem, the same applies to $\bfa^m\cDot E\bfb$.
For \tref{Table}{tab:m2g2}, the last Nishida relation from §\,\ref{subsubsec:relE2}
tells us that for the mod-$2$ Bockstein morphism $\beta$, we have
$\beta Q\bfd = \bfd^2 + [\bfd,\beta\bfd]$. The second summand vanishes by
\tref{Proposition}{prop:bdiv}, and hence $\beta Q\bfd = \bfd^2\ne 0$. Thus,
$Q\bfd$ is not the mod-$2$ reduction of an integral class, and hence generates
the single $\mathrm{Tor}$-summand of $H_3(\frM_{2,1};\bF_2)$.  Moreover,
\cite[p.\,14]{Mehner} shows that $TE\bfb$, $\bfc\cDot E\bfb$, $\bfd\bfe$, and
$\bfd\cDot E\bfb$ are non-trivial and that $\bfc\cDot E\bfb$ and $\bfd\bfe$ are
independent.\vspace*{.5\baselineskip}

\begin{hTable}
  \footnotesize
  \begin{tabular}{rllllll}    
  \toprule
  ~    & $\fM^{\vphantom 1}_{0,1}$ & $\fM_{0,1}^1$ & $\fM_{0,1}^2$ & $\fM_{0,1}^3$ & $\fM_{0,1}^4$ & $\fM_{0,1}^5$      \\
  \hline
  $0$~ & $1$         & $\bfa$        & $\bfa^2$      & $\bfa^3$      & $\bfa^4$      & $\bfa^5$           \\ 
  $1$~ &             &               & $\bfb$        & $\bfa\bfb$    & $\bfa^2\bfb$  & $\bfa^3\bfb$       \\
  $2$~ &             &               &               &               & $\bfb^2$      &  $\bfa\bfb^2$      \\
  $3$~ &             &               &               &               & $Q\bfb$       &  $\bfa\cdot Q\bfb$ \\
  \bottomrule
\end{tabular}

  \caption{Homology groups over $\bbF_2$ for $g=0$ and $m = 0,\dotsc,5$}\label{tab:m2g0}
\end{hTable}

\begin{tTable}
  \footnotesize
  \begin{tabular}{rlllll} 
  \toprule
  ~    & $\fM^{\vphantom 1}_{1,1}$ & $\fM_{1,1}^1$ & $\fM_{1,1}^2$                & $\fM_{1,1}^3$                                         & $\fM_{1,1}^4$                                                               \\
  \hline
  $0$~ & $\bfc$      & $\bfa\bfc$    & $\bfa^2\bfc$                 & $\bfa^3\bfc$                                          & $\bfa^4\bfc$                                                                              \\
  $1$~ & $\bfd$      & $\bfa\bfd$    & $\bfa^2\bfd,\bfb\bfc$        & $\bfa^3\bfd,\bfa\bfb\bfc$                             & $\bfa^4\bfd,\bfa^2\bfb\bfc$                                                               \\
  $2$~ &             & $\bfe$        & $\bfa\bfe,\bfb\bfd,?$        & $\bfa^2\bfe,\bfa\bfb\bfd,?$                           & $\bfa^3\bfe,\bfa^2\bfb\bfd,\bfb^2\bfc,?$                                                  \\
  $3$~ &             & $E\bfb$       & $\bff,\bfa\!\cdot\! E\bfb,?$~~ & $\bfa\bff,\bfb\bfe,\bfa^2\!\cdot\!E\bfb,?^{\oplus 2}$~~ & $\bfa^2\bff,\bfa\bfb\bfe,\bfb^2\bfd,Q\bfb\!\cdot\!\bfc,\bfa^3\!\cdot\!E\bfb,?^{\oplus 3}$ \\
  $4$~ &             &               & $?$                          & $\bfb\!\cdot\!E\bfb,?^{\oplus 3}$                     & $\bfb\bff,Q\bfb\!\cdot\!\bfd,\bfa\bfb\!\cdot\!E\bfb,?^{\oplus 5}$                         \\
  $5$~ &             &               &                              & $?$                                                   & $?^{\oplus 5}$                                                                            \\
  $6$~ &             &               &                              &                                                       & $?^{\oplus 2}$                                                                            \\
  \bottomrule
\end{tabular}

  \caption{Homology groups over $\bbF_2$ for $g=1$ and $m = 0,\dotsc,4$;
    here the symbol $?^{\oplus k}$ means that $k$ generators have not yet been found.}\vspace*{.8\baselineskip}\label{tab:m2g1}
\end{tTable}

\begin{tTable}
  \footnotesize
  \begin{tabular}{rlll}
  \toprule
  ~    & $\fM^{\vphantom 1}_{2,1}$               & $\fM_{2,1}^1$                                                               & $\fM_{2,1}^2$                                                                                                    \\
  \hline
  $0$~ & $\bfc^2$                  & $\bfa\bfc^2$                                                                & $\bfa^2\bfc^2$                                                                                                   \\
  $1$~ & $\bfc\bfd$                & $\bfa\bfc\bfd$                                                              & $\bfa^2\bfc\bfd,\bfb\bfc^2$                                                                                      \\
  $2$~ & $\bfd^2,?$                & $\bfa\bfd^2,?$                                                              & $\bfa^2\bfd^2,\bfb\bfc\bfd,?^{\oplus 3}$                                                                         \\
  $3$~ & $\lambda\bfs,T\bfe,Q\bfd$~~ & $\lambda\bfa\bfs,\bfa\cDot T\bfe,\bfa\cDot Q\bfd,\bfc\cDot E\bfb,\bfd\bfe$~~  & $\lambda\bfa^2\bfs,\bfa^2\cDot T\bfe,\bfa^2\cDot Q\bfd,\bfa\bfc\cDot E\bfb,\bfa\bfd\bfe,\bfb\bfd^2,?^{\oplus 3}$ \\
  $4$~ & $TE\bfb,?$                & $\bfa\cDot TE\bfb,\bfd\cDot E\bfb,?^{\oplus 2}$                             & $\bfa^2\cDot TE\bfb,\bfa\bfd\cDot E\bfb,\bfb\cDot T\bfe,?^{\oplus 7}$                                            \\
  $5$~ & $?$                       & $?^{\oplus 3}$                                                              & $\bfb\cDot TE\bfb,?^{\oplus 10}$                                                                                 \\
  $6$~ &                           & $?$                                                                         & $?^{\oplus 9}$                                                                                                   \\
  $7$~ &                           &                                                                             & $?^{\oplus 4}$                                                                                                   \\
  $8$~ &                           &                                                                             & $?$                                                                                                              \\
  \bottomrule
\end{tabular}

  \caption{Homology groups over $\bbF_2$ for $g=2$ and $m = 0,1,2$}\label{tab:m2g2}
\end{tTable}

\printbibliography[heading=bibintoc]

\small

\noindent\textbf{Carl-Friedrich Bödigheimer}\quad
Mathematisches Institut,
Universität Bonn,
Endenicher Allee 60,
53115 Bonn,
Germany,
\mail{cfb@math.uni-bonn.de}
\bigskip

\noindent\textbf{Felix Boes}\quad
Institute of Computer Science,
Universität Bonn,
Friedrich-Hirzebruch-Allee 8,
53115 Bonn,
Germany,
\mail{boes@cs.uni-bonn.de}
\bigskip

\noindent\textbf{Florian Kranhold}\quad
Mathematisches Institut,
Universität Bonn,
Endenicher Allee 60,
53115 Bonn,
Germany,
\mail{kranhold@math.uni-bonn.de}

\end{document}